%%%%%%%%%%%%%%%%%%%%%%% file template.tex %%%%%%%%%%%%%%%%%%%%%%%%%
%
% This is a general template file for the LaTeX package SVJour3
% for Springer journals.          Springer Heidelberg 2010/09/16
%
% Copy it to a new file with a new name and use it as the basis
% for your article. Delete % signs as needed.
%
% This template includes a few options for different layouts and
% content for various journals. Please consult a previous issue of
% your journal as needed.
%
%%%%%%%%%%%%%%%%%%%%%%%%%%%%%%%%%%%%%%%%%%%%%%%%%%%%%%%%%%%%%%%%%%%
%
% First comes an example EPS file -- just ignore it and
% proceed on the \documentclass line
% your LaTeX will extract the file if required
% [arxiv_v2: filecontents example.eps stripped, 188 chars]
\RequirePackage{fix-cm}
\documentclass[smallextended]{svjour3}       % onecolumn (second format)
\smartqed  % flush right qed marks, e.g. at end of proof
\usepackage{graphicx}
\usepackage{geometry}
\usepackage{color}
\geometry{left=3.4cm,right=3.4cm}

\newcommand{\tr}{^{\sf T}}
\newcommand{\m}[1]{{\bf{#1}}}

\newcommand{\C}[1]{{\cal {#1}}}

%
% \usepackage{mathptmx}      % use Times fonts if available on your TeX system
%
% insert here the call for the packages your document requires
%\usepackage{latexsym}
% etc.
%
% please place your own definitions here and don't use \def but
% \newcommand{}{}
%
% Insert the name of "your journal" with
% \journalname{myjournal}
%
\begin{document}

\title{ Generalized Symmetric ADMM for Separable Convex Optimization
\thanks{The work was supported by the National Science Foundation of China under grants 11671318 and 11571178,
the Science Foundation of Fujian province of China  under grant 2016J01028,
and the National Science Foundation of U.S.A. under grant 1522654.}
}

\author{Jianchao Bai   \and Jicheng Li  \and Fengmin Xu \and Hongchao Zhang
}

%\authorrunning{Short form of author list} % if too long for running head

\institute{
    Jianchao Bai
    \at School of Mathematics and Statistics, Xi'an Jiaotong University, Xi'an 710049, P.R. China\\
    \email{{\tt bjc1987@163.com}}
\and
    Jicheng Li
    \at School of Mathematics and Statistics, Xi'an Jiaotong University, Xi'an 710049, P.R. China\\
    \email{{\tt jcli@mail.xjtu.edu.cn}}
\and
    Fengmin Xu
    \at School of Economics and Finance, Xi'an Jiaotong University, Xi'an 710049, P.R. China\\
    \email{{\tt fengminxu@mail.xjtu.edu.cn}}
\and
    Hongchao Zhang \at Department of Mathematics,
    Louisiana State University, Baton Rouge, LA 70803-4918, U.S.A.\\
    \email{{\tt hozhang@math.lsu.edu}}
}

\date{Received: date / Accepted: date}
% The correct dates will be entered by the editor

\maketitle

\begin{abstract}
The Alternating Direction Method of Multipliers (ADMM) has been proved to be effective for solving separable convex optimization subject to linear constraints. In this paper, we propose a Generalized Symmetric ADMM (GS-ADMM), which updates the Lagrange multiplier twice with suitable stepsizes, to solve the multi-block separable convex programming. This GS-ADMM partitions the data into two group variables so that one group consists of $p$ block variables while the other has $q$ block variables, where $p \ge 1$ and $q \ge 1$ are two integers. The two grouped variables are updated in a {\it Gauss-Seidel}  scheme, while the variables within each group are updated in a {\it Jacobi} scheme, which would make it very attractive for a big data setting.  By adding proper proximal terms to the subproblems, we specify the domain of the stepsizes to guarantee that GS-ADMM is globally convergent with a worst-case  $\C{O}(1/t)$ ergodic convergence rate. It turns out that our convergence domain of the stepsizes is significantly larger than other convergence domains in the literature. Hence, the GS-ADMM is more flexible and attractive  on choosing and using larger stepsizes of the dual variable. Besides, two special cases of GS-ADMM, which allows using zero penalty terms, are also discussed and analyzed.
Compared with several state-of-the-art methods, preliminary numerical
experiments on solving a sparse matrix minimization problem in the statistical learning show that our
proposed method is effective and promising.

\keywords{Separable convex programming \and Multiple blocks \and Parameter convergence domain
\and Alternating direction method of multipliers \and  Global convergence \and  Complexity \and Statistical learning}
% \PACS{PACS code1 \and PACS code2 \and more}
\subclass{  65C60 \and 65E05 \and 68W40 \and 90C06}
\end{abstract}

%=============================================================
\section{Introduction}
%=============================================================
We  consider the following grouped multi-block separable convex programming problem
\begin{equation} \label{Prob}
\begin{array}{lll}
\min  &  \sum\limits_{i=1}^{p}f_i(x_i)+ \sum\limits_{j=1}^{q}g_j(y_j) \\
\textrm{s.t. } &   \sum\limits_{i=1}^{p}A_i x_i+\sum\limits_{j=1}^{q}B_jy_j=c,\\
     &   x_i\in \mathcal{X}_i, \; i=1, \cdots, p, \\
     &   y_j\in \mathcal{Y}_j, \; j =1, \cdots, q, \\
\end{array}
\end{equation}
 where $f_i(x_i):\mathcal{R}^{m_i}\rightarrow\mathcal{R},\ g_j(y_j):\mathcal{R}^{d_j}\rightarrow\mathcal{R}$ are closed and proper convex functions
(possibly nonsmooth); $A_i\in\mathcal{R}^{n\times m_i}, B_j\in\mathcal{R}^{n\times d_j}$ and $ c\in\mathcal{R}^{n}$ are given  matrices and vectors, respectively;  $\mathcal{X}_i\subset \mathcal{R}^{m_i}$ and $\mathcal{Y}_j\subset \mathcal{R}^{d_j} $   are closed convex sets; $p \ge 1$ and $q \ge 1$ are two integers. Throughout this paper, we assume that the solution set of the problem (\ref{Prob}) is nonempty and all the matrices $A_i$, $i=1,\cdots, p$, and $B_j$, $j=1,\cdots,q$, have full column rank.
And in the following, we denote $\mathcal{A}=\left(A_1, \cdots,A_p\right),\mathcal{B}=\left(B_1, \cdots,B_q\right)$,
$\m{x} = (x_1 \tr, \cdots, x_p \tr) \tr$, $\m{y} = (y_1 \tr, \cdots, y_q \tr) \tr$, $\C{X} = \C{X}_1 \times \C{X}_2 \times \cdots \C{X}_p $,
$\C{Y} = \C{Y}_1 \times \C{Y}_2 \times \cdots \C{Y}_q $ and $\mathcal{M}=\mathcal{X} \times \mathcal{Y} \times \mathcal{R}^n$.

In the last few years, the problem (\ref{Prob}) has been extensively investigated due to its wide applications in different fields, such as the sparse inverse covariance estimation problem \cite{RothmanBickel2008}
in finance and statistics, the model updating problem \cite{DongYuTian2015} in the design of vibration structural dynamic system and bridges,
 the low rank and sparse representations \cite{LiuLiBai2017} in image processing and so forth. One standard way to solve the problem (\ref{Prob}) is the  classical Augmented Lagrangian Method (ALM) \cite{Hestenes1969}, which minimizes the following augmented Lagrangian function
\[
 \mathcal{L}_\beta\left(\m{x}, \m{y},\lambda\right)=L\left(\m{x}, \m{y},\lambda\right)+\frac{\beta}{2} \| \C{A} \m{x} + \C{B} \m{y} -c\|^2,
\]
where $\beta>0$ is a penalty parameter for the equality constraint and
\begin{equation} \label{lagrange}
L\left(\m{x}, \m{y},\lambda\right)=\sum\limits_{i=1}^{p}f_i(x_i)+ \sum\limits_{j=1}^{q}g_j(y_j) -  \langle \lambda, \C{A} \m{x} + \C{B} \m{y} -c \rangle
\end{equation}
is the Lagrangian function of the problem (\ref{Prob}) with the Lagrange multiplier $\lambda\in \mathcal{R}^{n}$.  Then,  the ALM procedure for solving (\ref{Prob}) can
be described as  follows:
\[
\left \{\begin{array}{l}
\left(\m{x}^{k+1},\m{y}^{k+1}\right)=\arg\min\left\{ \mathcal{L}_\beta\left(\m{x},\m{y},
\lambda^k\right)|\ \m{x} \in\mathcal{X}, \m{y} \in \mathcal{Y}\right\},\\
\lambda^{k+1}=\lambda^{k}-\beta  (\C{A}\m{x}^{k+1} + \C{B}\m{y}^{k+1} -c).
\end{array}\right.
\]
%Here and in the following of the paper, we will use the notations:
%\[
%\mathcal{A}=\left(A_1, \cdots,A_p\right), \quad \mathcal{B}=\left(B_1, \cdots,B_q\right),\]
%\[x = (x_1 \tr, \cdots, x_p \tr) \tr,\quad y = (y_1 \tr, \cdots, y_q \tr) \tr,\]
%\[\C{X} = \C{X}_1 \times \C{X}_2 \times \cdots \C{X}_p, \quad \C{Y} = \C{Y}_1 \times \C{Y}_2 \times \cdots \C{Y}_q .\]
However, ALM does not make full use of the separable structure of the
objective function of (\ref{Prob}) and hence, could not take advantage of
the special properties of the component objective functions $f_i$ and $g_j$
in (\ref{Prob}).
As a result, in many recent real applications involving big data, solving the subproblems of ALM becomes very expensive.

One effective approach to overcome such difficulty is the Alternating Direction
Method of Multipliers (ADMM), which
was originally proposed in \cite{GlowinskiMarrocco1975} and  could  be regarded as a splitting version of ALM. At each iteration, ADMM first sequentially
optimize over one block variable while fixing all the other block variables,
and then follows by  updating the Lagrange multiplier.
 A natural extension of ADMM for solving the multi-block case problem (\ref{Prob}) takes the following
iterations:
\begin{equation}\label{gs-admm}
  \left \{\begin{array}{lll}
\textrm{For}\ i=1,2,\cdots,p,\\
\qquad x_{i}^{k+1}=\arg\min\limits_{x_{i}\in\mathcal{X}_{i}} \mathcal{L}_\beta\left(x_1^{k+1},\cdots,x_{i-1}^{k+1}, x_i,x_{i+1}^k,\cdots, x_{p}^k,\m{y}^k,\lambda^k\right),\\
\textrm{For}\ j=1,2,\cdots,q,\\
\qquad y_{j}^{k+1}=\arg\min\limits_{y_{j}\in\mathcal{Y}_{j}} \mathcal{L}_\beta\left(\m{x}^{k+1},y_1^{k+1},\cdots,y_{j-1}^{k+1}, y_j,y_{j+1}^k,\cdots, y_{q}^k,\lambda^k\right),\\
\lambda^{k+1}=\lambda^k-\beta\left(\mathcal{A}\m{x}^{k+1}+\mathcal{B}\m{y}^{k+1}-c\right).
\end{array}\right.
\end{equation}
Obviously, the scheme (\ref{gs-admm}) is  a serial algorithm which uses the newest information of the variables at each
iteration. Although the above scheme was proved to be convergent for the two-block,
i.e., $p=q=1$, separable convex minimization (see \cite{HeYuan2012}), as shown in
\cite{Chen2016}, the direct extension of ADMM (\ref{gs-admm}) for the multi-block
case, i.e., $p+q \ge 3$, without proper modifications is not necessarily convergent.
Another natural extension of ADMM is to use the Jacobian fashion,
 where the variables  are updated simultaneously after each iteration, that is,
\begin{equation}\label{ja-admm}
 \left \{\begin{array}{lll}
\textrm{For}\ i=1,2,\cdots,p,\\
\qquad x_{i}^{k+1}=\arg\min\limits_{x_{i}\in\mathcal{X}_{i}} \mathcal{L}_\beta\left(x_1^k,\cdots,x_{i-1}^k, x_i,x_{i+1}^k,\cdots, x_{p}^k,
\m{y}^k,\lambda^k\right),\\
\textrm{For}\ j=1,2,\cdots,q,\\
\qquad y_{j}^{k+1}=\arg\min\limits_{y_{j}\in\mathcal{Y}_{j}} \mathcal{L}_\beta\left(\m{x}^k,y_1^k,\cdots,y_{j-1}^k, y_j,y_{j+1}^k,\cdots, y_{q}^k,\lambda^k\right),\\
\lambda^{k+1}=\lambda^k-\beta\left(\mathcal{A}\m{x}^{k+1}+\mathcal{B}\m{y}^{k+1}-c\right).
\end{array}\right.
\end{equation}
As shown in \cite{HeHouYuan2015}, however, the Jacobian scheme (\ref{ja-admm})
is not necessarily convergent either.
%The divergence of the schemes (\ref{gs-admm})-(\ref{ja-admm}) makes us  pore on a  question:
%{\bf why not allow some of the subproblems of (\ref{ja-admm}) to be solved in a parallel or serial manner?}
To ensure the convergence,
He et al. \cite{HeTaoYuan2015} proposed a novel  ADMM-type splitting method that by adding certain proximal terms,
allowed some of the subproblems to be solved in parallel, i.e., in a Jacobian fashion.
And in \cite{HeTaoYuan2015}, some sparse low-rank models and image  painting problems were tested to
verify the efficiency of their method.

More recently, a Symmetric ADMM (S-ADMM) was proposed by He et al. \cite{HeMaYuan2016} for solving the two-block separable convex minimization, where the algorithm performs the following updating scheme:
\begin{equation}\label{he-admm}
\left \{\begin{array}{lll}
\m{x}^{k+1}=\arg\min\left\{ \mathcal{L}_\beta\left(\m{x},\m{y}^k,\lambda^k\right) \ | \ \m{x} \in\mathcal{X} \right\},\\
\lambda^{k+\frac{1}{2}}=\lambda^k- \tau \beta\left(\C{A} \m{x}^{k+1}+ \C{B} \m{y} ^{k}-c\right),\\
\m{y}^{k+1}=\arg\min\left\{ \mathcal{L}_\beta( \m{x}^{k+1},\m{y},\lambda^{k+\frac{1}{2}}) \ | \ \m{y} \in\mathcal{Y}\right\},\\
\lambda^{k+1}=\lambda^{k+\frac{1}{2}}-s\beta\left(\C{A} \m{x}^{k+1}+\C{B} \m{y}^{k+1}-c\right),
\end{array}\right.
\end{equation}
and the stepsizes $(\tau,s)$ were restricted into the domain
\begin{equation}\label{region-he-admm}
\mathcal{H}=\left\{(\tau,s) \ | \ s \in (0, (\sqrt{5} + 1)/2 ),  \
\tau+s >0, \ \tau \in (-1,1), \ |\tau|<1+s-s^2 \right\}
\end{equation}
in order to ensure its global convergence.
The main improvement of \cite{HeMaYuan2016} is that the  scheme (\ref{he-admm}) largely extends the domain of the
stepsizes $(\tau,s)$ of other ADMM-type methods  \cite{HeLiuWangYuan2014}. What's more, the numerical performance of
S-ADMM  on solving the widely used basis pursuit model and the total-variational image debarring model significantly
outperforms the original ADMM in both the CPU time and the number of iterations. Besides, Gu, et al.\cite{GuJiang2015} also studied a semi-proximal-based strictly contractive Peaceman-Rachford splitting method, that is (\ref{he-admm}) with two additional proximal penalty terms for the $x$ and $y$ update.
But their method has a nonsymmetric convergence domain of the stepsize and still focuses on the two-block case problem, which limits its applications for solving
large-scale problems with multiple block variables.

Mainly motivated by the work of \cite{HeTaoYuan2015,HeMaYuan2016,GuJiang2015}, we would like to generalize
S-ADMM with more wider convergence domain of the stepsizes
to tackle the multi-block separable convex programming model (\ref{Prob}), which more frequently appears in recent applications
involving big data \cite{Chandrasekaran2012,Ma2017}.
Our algorithm framework can be described as follows:
\begin{equation}\label{GS-ADMM}
(\textbf{GS-ADMM})\   \left\{\begin{array}{lll}
 \textrm{For}\ i=1,2,\cdots,p,\\
 \quad x_{i}^{k+1}=\arg\min\limits_{x_{i}\in\mathcal{X}_{i}} \mathcal{L}_\beta (x_1^k,\cdots, x_i,\cdots,x_p^k,\m{y}^k,\lambda^k)
+P_i^k(x_i), \\
\quad \textrm{where } P_i^k(x_i) =
\frac{\sigma_1\beta}{2}\left\|A_{i}(x_{i}-x_{i}^k)\right\|^2,\\
 \lambda^{k+\frac{1}{2}}=\lambda^k-\tau\beta(\mathcal{A} \m{x}^{k+1}+\mathcal{B}\m{y}^{k}-c),\\ \\
 \textrm{For}\ j=1,2,\cdots,q,\\
 \quad y_{j}^{k+1}=\arg\min\limits_{y_{j}\in\mathcal{Y}_{j}} \mathcal{L}_\beta(\m{x}^{k+1},y_1^k,\cdots, y_j,\cdots,y_q^k,\lambda^{k+\frac{1}{2}}) + Q_j^k(y_j), \\
 \quad \textrm{where } Q_j^k(y_j) =
\frac{\sigma_2\beta}{2}\left\|B_{j}(y_{j}-y_{j}^k)\right\|^2,\\
\lambda^{k+1}=\lambda^{k+\frac{1}{2}}-s\beta(\mathcal{A}\m{x}^{k+1}+\mathcal{B}\m{y}^{k+1}-c).
\end{array}\right.
\end{equation}
In the above Generalized Symmetric ADMM ({\bf GS-ADMM}), $\tau$ and $s$ are two stepsize parameters satisfying
\begin{equation}\label{setK}
  (\tau, s) \in \C{K} = \left\{ (\tau, s) \ | \   \tau + s >0, \ \tau \le 1,\ -\tau^2 - s^2 -\tau s + \tau + s + 1 >0 \right\},
\end{equation}
and $\sigma_1\in (p-1,+\infty),\sigma_2\in (q-1,+\infty)$ are two proximal parameters\footnote{ Note that these two parameters are strictly positive in (\ref{GS-ADMM}).  In Section 4,  however, we analyze two special cases of GS-ADMM allowing either $\sigma_1$ or $\sigma_2$ to be zero.}
for the regularization terms $P_i^k(\cdot)$ and $Q_j^k(\cdot)$.
 He and Yuan\cite{Heyuan2015} also investigated the above GS-ADMM (\ref{GS-ADMM}) but restricted  the stepsize $\tau=s\in(0,1)$,
which does not exploit the advantages of using flexible stepsizes given in (\ref{setK}) to improve its convergence.

\begin{figure}[htbp]\label{figK}
 \begin{minipage}{1\textwidth}
 \centering
\resizebox{10cm}{8.5cm}{\includegraphics{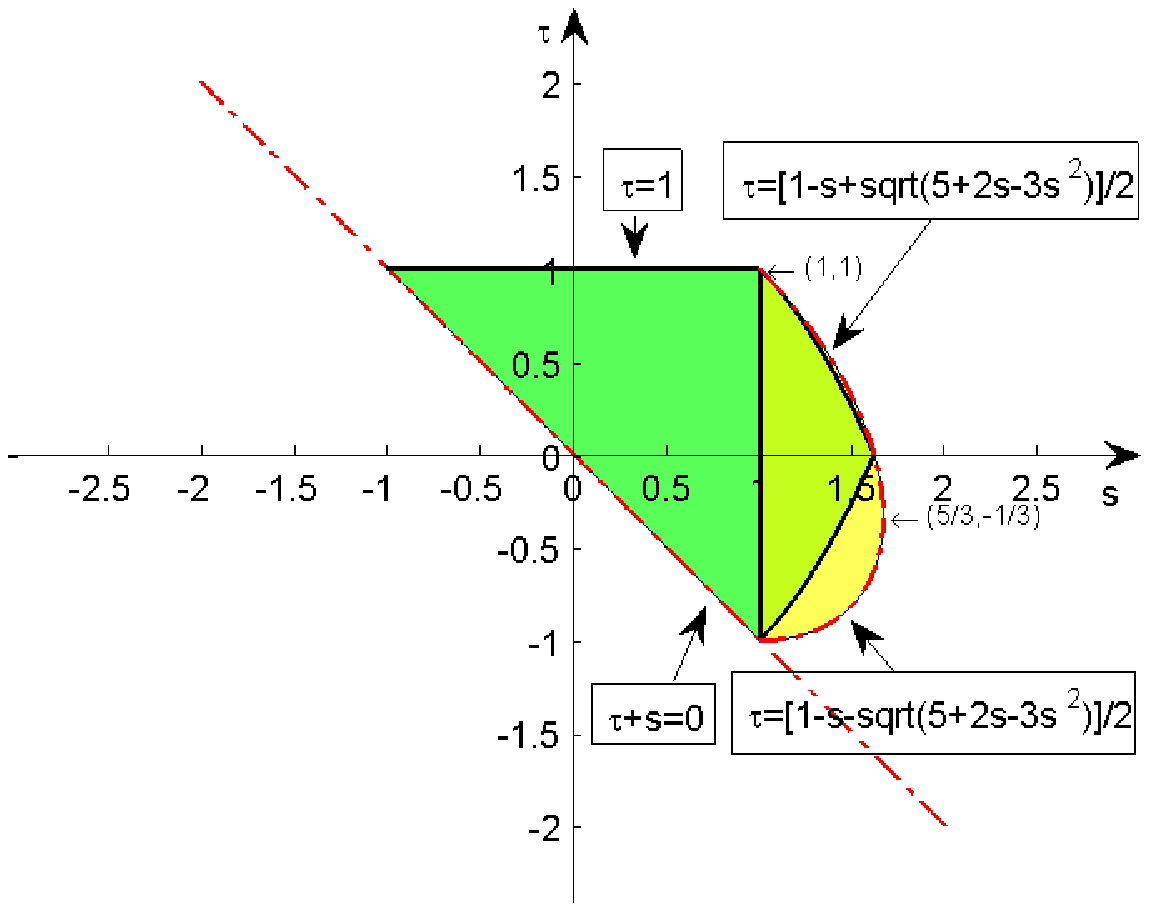}}
   \end{minipage}
%\caption{\footnotesize Stepsize region $\mathcal{K}$ of GS-ADMM}
\begin{center}
{\footnotesize Fig 1. Stepsize region $\mathcal{K}$ of GS-ADMM}
\end{center}
\end{figure}

Major contributions of this paper can be summarized as the following four aspects:
\begin{itemize}
\item Firstly, the new GS-ADMM  could deal with
the multi-block separable convex programming problem (\ref{Prob}), while the original S-ADMM in \cite{HeMaYuan2016} only
works for the two block case and may not be convenient for solving large-scale problems.
 In addition, the convergence domain $\mathcal{K}$ for the stepsizes $(\tau, s)$ in (\ref{setK}), shown in Fig. 1, is significantly larger than
the domain $\mathcal{H}$ given in  (\ref{region-he-admm}) and the convergence domain in \cite{GuJiang2015,Heyuan2015}.  For example, the stepsize $s$ can be arbitrarily close to $5/3$ when the stepsize $\tau$ is close to $-1/3$. Moreover, the above domain in (\ref{setK}) is later enlarged to a symmetric domain $\C{G}$ defined in (\ref{setG}), shown in Fig. 2.
 Numerical experiments in Sec. 5.2.1 also validate  that using more flexible and relatively larger stepsizes $(\tau,s)$ can
often improve the convergence speed of GS-ADMM.
On the other hand, we can see that when $\tau=0$, the stepsize $s$ can be chosen in the interval $(0, (\sqrt{5}+1)/2)$, which was
firstly suggested by Fortin and Glowinski in \cite{FortinGlowinski1983,Glowinski1984}.
\item Secondly, the global convergence of GS-ADMM as well as its  worst-case
 $\C{O}(1/t)$  ergodic convergence rate are established.
What's more, the total $p+q$ block variables are  partitioned into two  grouped  variables. While a {\it Gauss-Seidel} fashion is
taken between the two grouped variables, the block variables within each group are updated in a {\it Jacobi} scheme.
Hence,  parallel computing can be implemented for updating the variables within each group,  which could be critical in some
scenarios for problems involving big data.
\item Thirdly, we discuss two special cases of GS-ADMM, which is
 (\ref{GS-ADMM}) with $p\geq 1, q=1$ and $\sigma_2=0$ or with $p=1,q\geq 1$ and $\sigma_1=0$.
 These two special cases of GS-ADMM were not discussed in \cite{Heyuan2015} and in fact,
to the best of our knowledge, they have not been studied in the literature.
We show the convergence
domain of the stepsizes $(\tau, s)$ for these two cases is still $\C{K}$ defined in (\ref{setK})  that  is larger than
$\mathcal{H}$.
\item Finally, numerical experiments are performed on solving a sparse matrix
optimization problem arising from the statistical learning. We have investigated
the  effects  of the stepsizes $(\tau,s)$ and the penal parameter $\beta$
on the performance of GS-ADMM. And our numerical experiments demonstrate that by properly choosing
the parameters, GS-ADMM could perform significantly better than other
recently quite popular methods developed in  \cite{BaiLiLi2017,HeTaoYuan2015,HeXuYuan2016,WangSong2017}.
%we test some cases of a sparse matrix optimization problem arising from statistical learning for different parameters, different tolerance errors and different feasible initial values. Numerical experimental results  demonstrate that the stepsizes $(\tau,s)$ and the penal parameter $\beta$ can make an important effect on the performance of our proposed method, and when properly choosing the stepsizes $(\tau,s)$ and $\beta$, our GS-ADMM could perform significantly better than two recent popular methods in  \cite{HeTaoYuan2015,HeXuYuan2016}.
\end{itemize}

The paper is organized as follows. In Section 2, some preliminaries are given to reformulate the problem (\ref{Prob})
into a variational inequality and to interpret the GS-ADMM (\ref{GS-ADMM}) as a prediction-correction procedure. Section 3 investigates some properties of $\|\m{w}^k-\m{w}^*\|_H^2$ and  provides a lower bound of $\|\m{w}^{k}-\widetilde{\m{w}}^k\|_G^2$, where $H$ and $G$ are some particular
symmetric matrices. Then, we establish the global convergence of GS-ADMM and show its convergence rate in an ergodic sense. In Section 4,
we discuss two special cases of GS-ADMM, in which either the penalty parameters $\sigma_1$ or $\sigma_2$ is allowed to be zero. Some preliminary numerical
experiments are done in Section 5.
% Section 5 uses a practical sparse matrix minimization problem from the statistical learning to investigate the effectiveness of our proposed method.
 We finally make some conclusions in Section 6.

%=============================================================
\subsection{Notation}
%=============================================================
Denoted by $\mathcal{R}, \mathcal{R}^n, \mathcal{R}^{m\times n}$ be the set
of  real numbers, the set of $n$ dimensional real column vectors and the set of $m\times n$ real matrices, respectively. For any $x, y\in \mathcal{R}^n$,
$\langle x, y\rangle=x \tr y$ denotes their inner product and
$\|x\|=\sqrt{\langle x, x\rangle}$ denotes the Euclidean norm of $x$, where  the superscript
$\tr$ is the transpose. Given a symmetric matrix $G$, we define $\|x\|_G^2 = x \tr G x$.
Note that with this convention, $\|x\|_G^2$ is not necessarily nonnegative unless
$G$ is a positive definite matrix ($\succeq\textbf{0}$). For convenience,
we  use $I$ and $\bf{0}$ to  stand respectively for the identity matrix and the zero matrix with
proper dimension throughout the context.
%We also let $\mathcal{A}=\left(A_1, \cdots,A_p\right),\mathcal{B}=\left(B_1, \cdots,B_q\right)$,
%$x = (x_1 \tr, \cdots, x_p \tr) \tr$, $y = (y_1 \tr, \cdots, y_q \tr) \tr$, $\C{X} = \C{X}_1 \times \C{X}_2 \times \cdots \C{X}_p $,
%$\C{Y} = \C{Y}_1 \times \C{Y}_2 \times \cdots \C{Y}_q $ and $\mathcal{M}=\mathcal{X} \times \mathcal{Y} \times \mathcal{R}^n$.

%=============================================================
\section{Preliminaries}
%=============================================================
In this section,  we first use a variational inequality to characterize the solution
set of the problem (\ref{Prob}). Then, we  analyze that GS-ADMM (\ref{GS-ADMM})
can be treated as a prediction-correction procedure  involving
a prediction step  and a correction step.

%=============================================================
\subsection{Variational reformulation of (\ref{Prob})}
%=============================================================
We begin with the following standard lemma whose proof can be found in \cite{HeMaYuan2016,He2016}.
\begin{lemma} \label{opt-1}
Let $f:\mathcal{R}^m\longrightarrow \mathcal{R}$ and
$h:\mathcal{R}^m\longrightarrow \mathcal{R}$ be two convex functions  defined on a closed convex
set $\Omega\subset \mathcal{R}^m$ and $h$ is differentiable. Suppose that the solution set
$\Omega^* = \arg\min\limits_{x\in\Omega}\{f(x)+h(x) \}$ is nonempty. Then, we have
\[
x^* \in \Omega^* \ \mbox{ if and only if } \ x^*\in \Omega, \  f(x)-f(x^*)
+\left\langle x-x^*, \nabla h(x^*)\right\rangle\geq 0, \forall x\in\Omega.
\]
\end{lemma}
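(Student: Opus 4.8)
The plan is to establish the two implications of the equivalence separately, in each case working directly from the definition of convexity, using the convexity of the feasible set $\Omega$ to generate admissible perturbations of $x^*$, and invoking the differentiability of $h$ only through a one‑sided directional derivative so that the (possibly nonsmooth) function $f$ never needs to be differentiated.

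For the implication ``variational inequality $\Rightarrow$ optimality'', I would fix an arbitrary $x \in \Omega$ and combine two elementary estimates: the gradient inequality for the convex differentiable function $h$, namely $h(x) - h(x^*) \ge \langle \nabla h(x^*),\, x - x^* \rangle$, and the assumed inequality $f(x) - f(x^*) \ge -\langle \nabla h(x^*),\, x - x^*\rangle$. Adding these two gives $f(x)+h(x) \ge f(x^*)+h(x^*)$, and since $x$ is arbitrary in $\Omega$ this yields $x^* \in \Omega^*$.

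For the converse, ``optimality $\Rightarrow$ variational inequality'', I would fix $x \in \Omega$ and, for $t \in (0,1]$, use convexity of $\Omega$ to note that $x_t := x^* + t(x-x^*) = (1-t)x^* + tx \in \Omega$. Optimality of $x^*$ then gives $f(x_t) + h(x_t) \ge f(x^*) + h(x^*)$. Bounding $f(x_t) \le (1-t)f(x^*) + t f(x)$ by convexity of $f$, substituting, and cancelling $f(x^*)$ yields $t\big(f(x)-f(x^*)\big) + h(x_t) - h(x^*) \ge 0$; dividing by $t$ and letting $t \downarrow 0$, the difference quotient $\big(h(x_t)-h(x^*)\big)/t$ converges to $\langle \nabla h(x^*),\, x-x^*\rangle$ because $h$ is differentiable at $x^*$, which gives the claimed inequality.

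The only point requiring care — and hence the ``hard part'' — is that $f$ is merely convex and not assumed differentiable, so one cannot pass to a first‑order condition involving a gradient of $f$; the remedy, already incorporated above, is to keep $f$ controlled by its convexity estimate along the segment $[x^*,x]$ and to take the limit $t\downarrow 0$ only in the smooth term $h$. With that structuring, both directions reduce to routine one‑line manipulations, so I do not anticipate further obstacles.
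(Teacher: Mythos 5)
Your proof is correct: the forward direction follows by adding the gradient inequality for $h$ to the assumed variational inequality, and the converse correctly handles the nonsmooth $f$ by the convexity estimate along the segment $x^* + t(x-x^*)$ and passes to the limit $t\downarrow 0$ only in the differentiable term $h$. The paper itself gives no proof, deferring to the cited references, where essentially this same standard argument appears, so your approach matches the intended one.
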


It is well-known in optimization that a triple point $(\m{x}^*, \m{y}^*, \lambda^*) \in \C{M}$ is called the  saddle-point  of the
Lagrangian function  (\ref{lagrange}) if it satisfies
\[
L\left(\m{x}^*,\m{y}^*,\lambda\right)\leq L\left(\m{x}^*, \m{y}^*, \lambda^*\right)\leq
L\left(\m{x}, \m{y}, \lambda^*\right), \quad \forall (\m{x}, \m{y}, \lambda) \in \mathcal{M}
\]
which can be also characterized as
\[
  \left \{\begin{array}{lll}
\textrm{For}\ i=1,2,\cdots,p,\\
\qquad x_{i}^* = \arg\min\limits_{x_{i}\in\mathcal{X}_{i}} \mathcal{L}_\beta\left(x_1^*,\cdots,x_{i-1}^*, x_i,x_{i+1}^*,\cdots, x_{p}^*,
\m{y}^*,\lambda^*\right),\\
\textrm{For}\ j=1,2,\cdots,q,\\
\qquad y_{j}^* = \arg\min\limits_{y_{j}\in\mathcal{Y}_{j}} \mathcal{L}_\beta\left(\m{x}^*,y_1^*,\cdots,y_{j-1}^*, y_j,y_{j+1}^*,\cdots, y_{q}^*,\lambda^*\right),\\
\lambda^* = \arg\max\limits_{ \lambda \in\mathcal{R}^n} \mathcal{L}_\beta\left(\m{x}^*,\m{y}^*,\lambda \right).
\end{array}\right.
\]
Then, by Lemma \ref{opt-1}, the above saddle-point equations can be equivalently expressed as
\begin{equation} \label{opt-3}
 \left \{\begin{array}{lll}
\textrm{For}\ i=1,2,\cdots,p,\\
\qquad x_i^*\in\mathcal{X}_i, \quad f_i(x_i)-f_i(x_i^*)+\langle x_i-x_i^*, -A_i \tr \lambda^*\rangle\geq 0,
 \forall x_i\in\mathcal{X}_i,\\
\textrm{For}\ j=1,2,\cdots,q,\\
\qquad
y_j^*\in\mathcal{Y}_j, \quad g_j(y_j)-g_j(y_j^*)+\langle y_j-y_j^*, -B_j \tr \lambda^*\rangle\geq 0,
 \forall y_j\in\mathcal{Y}_j,\\
\lambda^*\in\mathcal{R}^n, \quad \left\langle \lambda-\lambda^*, \C{A} \m{x}^* + \C{B} \m{y}^* - c \right\rangle\geq 0,
\forall\lambda\in\mathcal{R}^n.
\end{array}\right.
\end{equation}
Rewriting (\ref{opt-3}) in a more compact variational inequality (VI) form, we have
\begin{equation} \label{opt-vi-1}
 h(\m{u})- h(\m{u}^*) + \left\langle \m{w}-\m{w}^*, \mathcal{J}(\m{w}^*)\right\rangle \geq 0,\quad
\forall \m{w}\in \mathcal{M},
\end{equation}
where
\[
h(\m{u})=\sum_{i=1}^{p}f_i (x_i) +\sum_{j=1}^{p}g_j (y_j)
\]
and
\[
\m{u}=\left(\begin{array}{c}
\m{x}\\  \m{y}\\
\end{array}\right),\quad
\m{w}=\left(\begin{array}{c}
\m{x}\\ \m{y}\\  \lambda
\end{array}\right), \quad
\mathcal{J}(\m{w})=\left(\begin{array}{c}
-\C{A}\tr\lambda\\
-\C{B}\tr\lambda\\
 \C{A} \m{x} + \C{B} \m{y} - c
\end{array}\right).
\]
Noticing that the affine mapping $\C{J}$ is skew-symmetric, we immediately get
\begin{equation} \label{tilde-Jw}
\left\langle \m{w}-\widehat{\m{w}}, \mathcal{J}(\m{w})-\mathcal{J}(\widehat{\m{w}})\right\rangle= 0,
\quad \forall\ \m{w}, \widehat{\m{w}}\in \mathcal{M}.
\end{equation}
Hence, (\ref{opt-vi-1}) can be also rewritten as
\begin{equation} \label{opt-vi}
\textrm{VI}(\m{h}, \mathcal{J}, \mathcal{M}):\quad h(\m{u})- h(\m{u}^*) + \left\langle \m{w}-\m{w}^*, \mathcal{J}(\m{w})\right\rangle \geq 0,\quad
\forall \m{w} \in \mathcal{M}.
\end{equation}

Because of the nonempty assumption  on   the solution set of (\ref{Prob}),
the solution set $\mathcal{M}^*$ of the variational inequality  $\textrm{VI}(h, \mathcal{J}, \mathcal{M})$
is also nonempty and convex, see e.g. Theorem 2.3.5 \cite{FacchineiPang2003} for more details. The following theorem given by Theorem 2.1 \cite{HeYuan2012}
provides a concrete way to characterize the set $\mathcal{M}^*$.
\begin{theorem} \label{opt-thm}
The solution set of the variational inequality  $\textrm{VI}(h, \mathcal{J}, \mathcal{M})$ is convex and can be expressed as
\[
\mathcal{M}^*=\bigcap_{\m{w}\in \mathcal{M}}\left\{\widehat{\m{w}}\in \mathcal{M}|\ h(\m{u})-
h(\widehat{\m{u}})+ \left\langle \m{w}-\widehat{\m{w}}, \mathcal{J}(\m{w})\right\rangle\geq 0\right\}.
\]
\end{theorem}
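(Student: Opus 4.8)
The plan is to prove the stated identity by establishing the two inclusions between $\mathcal{M}^*$ (the solution set of $\textrm{VI}(h,\mathcal{J},\mathcal{M})$ in the form (\ref{opt-vi-1})/(\ref{opt-vi})) and the set on the right-hand side, and then to read off convexity directly from the resulting representation as an intersection of convex sets. For the inclusion ``$\subseteq$'', take $\m{w}^*\in\mathcal{M}^*$, so that $h(\m{u})-h(\m{u}^*)+\langle\m{w}-\m{w}^*,\mathcal{J}(\m{w}^*)\rangle\ge 0$ for all $\m{w}\in\mathcal{M}$. Splitting $\langle\m{w}-\m{w}^*,\mathcal{J}(\m{w})\rangle=\langle\m{w}-\m{w}^*,\mathcal{J}(\m{w}^*)\rangle+\langle\m{w}-\m{w}^*,\mathcal{J}(\m{w})-\mathcal{J}(\m{w}^*)\rangle$ and using the skew-symmetry identity (\ref{tilde-Jw}) to kill the second term, I get $h(\m{u})-h(\m{u}^*)+\langle\m{w}-\m{w}^*,\mathcal{J}(\m{w})\rangle\ge 0$ for every $\m{w}\in\mathcal{M}$, which places $\m{w}^*$ in each member of the intersection.

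For the inclusion ``$\supseteq$'', suppose $\widehat{\m{w}}\in\mathcal{M}$ satisfies $h(\m{u})-h(\widehat{\m{u}})+\langle\m{w}-\widehat{\m{w}},\mathcal{J}(\m{w})\rangle\ge 0$ for all $\m{w}\in\mathcal{M}$; I want to upgrade this to the same inequality with $\mathcal{J}(\widehat{\m{w}})$ in place of $\mathcal{J}(\m{w})$. Fix $\m{w}\in\mathcal{M}$, and for $\alpha\in(0,1]$ set $\m{w}_\alpha=\alpha\m{w}+(1-\alpha)\widehat{\m{w}}\in\mathcal{M}$ (convexity of $\mathcal{M}$). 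Testing the hypothesis at $\m{w}_\alpha$, using $\m{w}_\alpha-\widehat{\m{w}}=\alpha(\m{w}-\widehat{\m{w}})$ together with $h(\m{u}_\alpha)-h(\widehat{\m{u}})\le\alpha\bigl(h(\m{u})-h(\widehat{\m{u}})\bigr)$ from convexity of $h$, dividing through by $\alpha>0$, and then letting $\alpha\to 0^+$ while invoking continuity of the affine map $\mathcal{J}$ (so that $\mathcal{J}(\m{w}_\alpha)\to\mathcal{J}(\widehat{\m{w}})$), I arrive at $h(\m{u})-h(\widehat{\m{u}})+\langle\m{w}-\widehat{\m{w}},\mathcal{J}(\widehat{\m{w}})\rangle\ge 0$. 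As $\m{w}\in\mathcal{M}$ was arbitrary, $\widehat{\m{w}}$ solves $\textrm{VI}(h,\mathcal{J},\mathcal{M})$, i.e.\ $\widehat{\m{w}}\in\mathcal{M}^*$.

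It remains to observe convexity. For each fixed $\m{w}\in\mathcal{M}$, the map $\widehat{\m{w}}\mapsto h(\m{u})-h(\widehat{\m{u}})+\langle\m{w}-\widehat{\m{w}},\mathcal{J}(\m{w})\rangle$ is concave in $\widehat{\m{w}}$ (a constant plus the concave function $-h(\widehat{\m{u}})$ plus an affine term), so its nonnegativity set, intersected with the convex set $\mathcal{M}$, is convex; an arbitrary intersection of convex sets is convex, hence so is $\mathcal{M}^*$. I expect the only genuinely delicate point to be the limiting argument in ``$\supseteq$'': one must check that the affine-combination substitution, combined with convexity of $h$, produces precisely the common factor $\alpha$ that can be cancelled, and that the passage $\alpha\to 0^+$ is legitimate — but this is standard once we use that $\mathcal{J}$ is affine (hence continuous) and $\mathcal{M}$ is convex.
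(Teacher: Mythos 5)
Your proof is correct. Note, though, that the paper does not prove this theorem at all: it simply cites Theorem 2.1 of He and Yuan \cite{HeYuan2012} (with Theorem 2.3.5 of \cite{FacchineiPang2003} for nonemptiness), and your two-inclusion argument is essentially the standard proof from that reference, so you have supplied the omitted details rather than diverged from the paper. One remark on economy: in this particular setting the ``$\supseteq$'' direction does not need the Minty-type limiting argument with $\m{w}_\alpha=\alpha\m{w}+(1-\alpha)\widehat{\m{w}}$ and $\alpha\to 0^+$. Because $\mathcal{J}$ is affine with a skew-symmetric linear part, identity (\ref{tilde-Jw}) gives the exact equality $\left\langle \m{w}-\widehat{\m{w}}, \mathcal{J}(\m{w})\right\rangle=\left\langle \m{w}-\widehat{\m{w}}, \mathcal{J}(\widehat{\m{w}})\right\rangle$ for all $\m{w},\widehat{\m{w}}\in\mathcal{M}$, so both inclusions (i.e.\ the equivalence of (\ref{opt-vi-1}) and (\ref{opt-vi})) follow in one line, exactly as you did for ``$\subseteq$''. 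Your limiting argument is nevertheless valid and buys generality: it only uses monotonicity of $\mathcal{J}$ (nonnegativity of the cross term), convexity of $\mathcal{M}$ and $h$, and continuity of $\mathcal{J}$, which is the form needed in \cite{HeYuan2012,FacchineiPang2003}. The convexity conclusion, read off from the representation as an intersection of superlevel sets of concave functions of $\widehat{\m{w}}$, is also correct.
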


%=============================================================
\subsection{A prediction-correction interpretation of GS-ADMM}
%=============================================================
Following a similar approach in \cite{HeMaYuan2016}, we next interpret
GS-ADMM  as a prediction-correction procedure. First,  let
\begin{equation} \label{tilde-xy}
\widetilde{\m{x}}^k=\left(\begin{array}{c}
\widetilde{x}_1^k\\  \widetilde{x}_2^k\\ \vdots\\ \widetilde{x}_p^k
\end{array}\right)=\left(\begin{array}{c}
x_1^{k+1}\\  x_2^{k+1}\\ \vdots\\ x_p^{k+1}
\end{array}\right),
\quad \widetilde{\m{y}}^k=\left(\begin{array}{c}
\widetilde{y}_1^k\\  \widetilde{y}_2^k\\ \vdots\\ \widetilde{y}_q^k
\end{array}\right)=\left(\begin{array}{c}
y_1^{k+1}\\  y_2^{k+1}\\ \vdots\\ y_q^{k+1}
\end{array}\right),
\end{equation}
\begin{equation} \label{tilde-lambda}
\widetilde{\lambda}^k=\lambda^k-\beta \left(\mathcal{A}\m{x}^{k+1}+\mathcal{B}\m{y}^{k}-c\right),
\end{equation}
and
\begin{equation}\label{tilde-uw}
\widetilde{\m{u}}=\left(\begin{array}{c}
\widetilde{\m{x}}\\  \widetilde{\m{y}}\\
\end{array}\right),\quad
\widetilde{\m{w}}^{k}=\left(\begin{array}{c}
 \widetilde{\m{x}}^k\\ \widetilde{\m{y}}^k\\ \widetilde{\lambda}^k
\end{array}\right)=
\left(\begin{array}{c}
\m{x}^{k+1}\\ \m{y}^{k+1}\\   \widetilde{\lambda}^{k}
\end{array}\right).
\end{equation}
Then, by using the above notations, we derive the  following basic lemma.
\begin{lemma}\label{tilde-VI}
For the iterates $\widetilde{\m{u}}^k, \widetilde{\m{w}}^k$ defined in (\ref{tilde-uw}),
we have $\widetilde{\m{w}}^k\in\mathcal{M}$ and
\begin{equation} \label{app-vi}
  h(\m{u})-h(\widetilde{\m{u}}^k)+ \left\langle \m{w}-\widetilde{\m{w}}^k, \mathcal{J}(\widetilde{\m{w}}^k)+Q(\widetilde{\m{w}}^k-\m{w}^k)\right\rangle \geq 0, \quad \forall \m{w}\in \mathcal{M},
\end{equation}
where
\begin{equation} \label{Q}
Q=\left[\begin{array}{cc}
H_{\mathbf{x}} & \bf{0}\\
\bf{0} & \widetilde{Q}
\end{array}\right]
\end{equation}
with
\begin{equation} \label{Hx}
H_{\mathbf{x}}=\beta\left[\begin{array}{ccccccccccccc}
\sigma_1 A_1\tr A_1 &&&& - A_1\tr A_2        &&&& \cdots &&&&- A_1\tr A_{p}  \\\\
- A_2\tr A_1        &&&& \sigma_1 A_2\tr A_2 &&&& \cdots &&&&- A_2\tr A_{p}  \\\\
\vdots              &&&& \vdots              &&&&\ddots  &&&&\vdots   \\\\
 - A_p\tr A_1       &&&&  - A_p\tr A_2       &&&&\cdots &&&&\sigma_1 A_p\tr A_p
\end{array}\right],
\end{equation}
\begin{equation} \label{tilde-Q}
 \widetilde{Q}=\left[\begin{array}{ccccccccccccccc|cc}
 (\sigma_2+1)\beta B_{1}\tr B_{1} &&&& {\bf 0} &&&&\cdots &&&&{\bf 0} &&&&-\tau B_{1}\tr \\
 &&&& &&&& &&&& &&&&\\
 {\bf 0} &&&& (\sigma_2+1)\beta B_{2}\tr B_{2} &&&&\cdots &&&&{\bf 0} &&&&-\tau B_{2}\tr \\
  &&&& &&&& &&&& &&&&\\
 \vdots &&&& \vdots &&&&\ddots &&&&\vdots &&&&\vdots \\
  &&&& &&&& &&&& &&&&\\
 {\bf 0} &&&& {\bf 0} &&&&\cdots &&&&(\sigma_2+1)\beta B_{q}\tr B_{q} &&&&-\tau B_{q}\tr \\
  &&&& &&&& &&&& &&&&\\ \hline
-B_{1} &&&& -B_{2}  &&&&\cdots &&&&-B_{q} &&&&\frac{1}{\beta}I
\end{array}\right].
\end{equation}
\end{lemma}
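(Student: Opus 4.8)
The plan is to derive the variational inequality (\ref{app-vi}) directly from the optimality conditions of the $x_i$- and $y_j$-subproblems of GS-ADMM, and then reorganize the resulting cross-terms into the matrix $Q$. First I would apply Lemma \ref{opt-1} to the $i$-th $x$-subproblem. Its objective is $\mathcal{L}_\beta(x_1^k,\dots,x_i,\dots,x_p^k,\m{y}^k,\lambda^k)+P_i^k(x_i)$, whose nonsmooth part is $f_i(x_i)$ and whose smooth part (in $x_i$) has gradient, at $x_i=x_i^{k+1}=\widetilde x_i^k$, equal to
\[
-A_i\tr\lambda^k+\beta A_i\tr\Bigl(A_i\widetilde x_i^k+\sum_{\ell\ne i}A_\ell x_\ell^k+\C{B}\m{y}^k-c\Bigr)+\sigma_1\beta A_i\tr A_i(\widetilde x_i^k-x_i^k).
\]
Using the definition (\ref{tilde-lambda}) of $\widetilde\lambda^k$ to substitute $\lambda^k-\beta(\C{A}\m{x}^{k+1}+\C{B}\m{y}^k-c)=\widetilde\lambda^k$ — being careful that the subproblem sees $\C{A}\m{x}$ with $x_i$ replaced by $\widetilde x_i^k$ but all other $x_\ell$ still at $x_\ell^k$, so one picks up the correction $\beta A_i\tr\sum_{\ell\ne i}A_\ell(x_\ell^k-\widetilde x_\ell^k)$ — the gradient becomes $-A_i\tr\widetilde\lambda^k$ plus exactly the $i$-th block-row of $H_{\m x}(\widetilde{\m x}^k-\m x^k)$. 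This gives, for each $i$,
\[
f_i(x_i)-f_i(\widetilde x_i^k)+\bigl\langle x_i-\widetilde x_i^k,\ -A_i\tr\widetilde\lambda^k+[H_{\m x}(\widetilde{\m x}^k-\m x^k)]_i\bigr\rangle\ge0,\quad\forall x_i\in\C{X}_i.
\]

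Next I would treat the $j$-th $y$-subproblem analogously. Here the gradient in $y_j$ at $y_j=y_j^{k+1}=\widetilde y_j^k$ is $-B_j\tr\lambda^{k+1/2}+\beta B_j\tr(\C{A}\m{x}^{k+1}+B_j\widetilde y_j^k+\sum_{\ell\ne j}B_\ell y_\ell^k-c)+\sigma_2\beta B_j\tr B_j(\widetilde y_j^k-y_j^k)$. I would rewrite $\lambda^{k+1/2}=\lambda^k-\tau\beta(\C{A}\m{x}^{k+1}+\C{B}\m{y}^k-c)=\widetilde\lambda^k+(1-\tau)\beta(\C{A}\m{x}^{k+1}+\C{B}\m{y}^k-c)$, and also express $\C{A}\m{x}^{k+1}+\C{B}\m{y}^k-c=\tfrac1\beta(\lambda^k-\widetilde\lambda^k)$. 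Collecting the $\C{B}\m{y}^k$ terms against $\C{B}\widetilde{\m y}^k$ (this produces the $(\sigma_2+1)\beta B_j\tr B_j(\widetilde y_j^k-y_j^k)$ diagonal and the off-diagonal zeros — the key point being that only the single block $B_j\widetilde y_j^k$ appears, all others at $y_\ell^k$, so after adding and subtracting $\C{B}\widetilde{\m y}^k$ the within-group coupling is purely diagonal, unlike the $x$-group), and the multiplier terms into $-B_j\tr\widetilde\lambda^k-\tau B_j\tr(\widetilde\lambda^k-\lambda^k)/1$ appropriately scaled, one matches the $j$-th block-row of $\widetilde Q$ acting on $(\widetilde{\m y}^k-\m y^k,\widetilde\lambda^k-\lambda^k)$. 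The $\lambda$-row of (\ref{app-vi}) is the trivial identity $\langle\lambda-\widetilde\lambda^k,\ \C{A}\m{x}^{k+1}+\C{B}\m{y}^{k+1}-c+\tfrac1\beta(\widetilde\lambda^k-\lambda^k)-(\C{B}\widetilde{\m y}^k-\C{B}\m{y}^{k+1})\rangle$; from (\ref{tilde-lambda}), $\C{A}\m{x}^{k+1}+\C{B}\m{y}^k-c=\tfrac1\beta(\lambda^k-\widetilde\lambda^k)$, and $\widetilde{\m y}^k=\m{y}^{k+1}$, so the bracket collapses to $\C{A}\widetilde{\m x}^k+\C{B}\widetilde{\m y}^k-c$, i.e. the third component of $\C{J}(\widetilde{\m w}^k)$ with no $Q$-contribution (the last block-row of $Q$ beyond $\widetilde Q$'s, together with the $-B_j$ entries in $\widetilde Q$, accounts for the $\C{B}(\widetilde{\m y}^k-\m y^k)$ and $\tfrac1\beta(\widetilde\lambda^k-\lambda^k)$ bookkeeping). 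Summing the three groups of inequalities over $i$ and $j$ and adding the $\lambda$-identity yields (\ref{app-vi}); the containment $\widetilde{\m w}^k\in\C{M}$ is immediate since $\widetilde x_i^k\in\C{X}_i$, $\widetilde y_j^k\in\C{Y}_j$ by construction and $\widetilde\lambda^k\in\mathcal{R}^n$.

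The main obstacle I anticipate is purely bookkeeping rather than conceptual: carefully tracking, in each subproblem, which blocks have been updated to their $(k+1)$-values and which remain at iteration $k$, so that when one forces the expression $\beta(\C{A}\m{x}+\C{B}\m{y}-c)$ to appear (in order to invoke the definitions of $\widetilde\lambda^k$ and $\lambda^{k+1/2}$) the residual ``correction'' terms are grouped correctly — for the $x$-group into the full dense $H_{\m x}$ with the $\sigma_1$-scaled diagonal, and for the $y$-group into the block-diagonal $(\sigma_2+1)\beta B_j\tr B_j$ plus the $-\tau B_j\tr$ multiplier coupling. A secondary subtlety is the asymmetry between the two groups: in the $x$-update the Lagrange multiplier is the old $\lambda^k$ (so substituting $\widetilde\lambda^k$ leaves a clean dense $H_{\m x}$ with coefficient $1$ on the off-diagonals), whereas in the $y$-update the multiplier is the already-half-updated $\lambda^{k+1/2}$, which is why the factor $(1-\tau)$ and hence the $-\tau B_j\tr$ entries surface in $\widetilde Q$. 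Once these substitutions are lined up, matching coefficients against (\ref{Q})--(\ref{tilde-Q}) is routine, and one uses the skew-symmetry relation (\ref{tilde-Jw}) only implicitly here (it will matter in the later convergence analysis, not in establishing this lemma).
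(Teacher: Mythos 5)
Your proposal is correct and follows essentially the same route as the paper's own proof: apply Lemma \ref{opt-1} to each $x_i$- and $y_j$-subproblem, substitute the definitions of $\widetilde{\lambda}^k$ and $\lambda^{k+\frac{1}{2}}$ (producing the dense $H_{\mathbf{x}}$ block for the $x$-group and the block-diagonal $(\sigma_2+1)\beta B_j\tr B_j$ plus $-\tau B_j\tr$ coupling for the $y$-group), and append the $\lambda$-relation, which is just (\ref{tilde-lambda}) rewritten. The only blemish is a harmless slip in your $\lambda$-row display (the correction term should be $-\C{B}(\widetilde{\m{y}}^k-\m{y}^{k})$, after which the whole third component vanishes identically by (\ref{tilde-lambda}) rather than reducing to $\C{A}\widetilde{\m{x}}^k+\C{B}\widetilde{\m{y}}^k-c$), which does not affect the argument.
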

\begin{proof}
  Omitting  some constants, it is easy to verify that the $x_i$-subproblem $(i=1,2,\cdots,p)$ of GS-ADMM  can be written as
\[
x_i^{k+1}=\arg\min\limits_{x_i\in\mathcal{X}_i}\left\{
f_i(x_i)-\langle\lambda^k, A_ix_i \rangle
+\frac{\beta}{2}\left\|A_ix_i - c_{x,i}\right\|^2+\frac{\sigma_1\beta}{2}\left\|A_i(x_i-x_i^k)\right\|^2  \right\},
\]
where $c_{x,i} = c - \sum\limits_{l=1,l\neq i}^{p}A_lx_l^k - \mathcal{B}\m{y}^k$.
Hence, by Lemma \ref{opt-1}, we have $x_i^{k+1}\in\mathcal{X}_i$ and
\[
f_i(x_i)-f_i(x_i^{k+1})
+ \left\langle A_i (x_i-x_i^{k+1}),
 - \lambda^k + \beta (A_ix_i^{k+1}-c_{x,i})  +\sigma_1\beta A_i
 (x_i^{k+1}-x_i^k )  \right\rangle\geq 0
\]
for any $ x_i\in\mathcal{X}_i$. So, by the definition of (\ref{tilde-xy}) and
(\ref{tilde-lambda}), we get
\begin{equation}\label{opt-fx}
f_i(x_i)-f_i(\widetilde{x}_i^k)
+ \left\langle A_i (x_i-\widetilde{x}_i^k),
 - \widetilde{\lambda}^k  -\beta \sum_{l=1,l\neq i}^p A_l
 (\widetilde{x}_l^k-x_l^k )  +\sigma_1\beta \sum_{l=1}^p A_l
 (\widetilde{x}_l^k-x_l^k )  \right\rangle\geq 0
\end{equation}
for any $ x_i\in\mathcal{X}_i$.
By the way of generating $\lambda^{k+\frac{1}{2}}$ in (\ref{GS-ADMM}) and
the definition of (\ref{tilde-lambda}),  the following relation holds
\begin{equation}\label{eq-lambda}
\lambda^{k+\frac{1}{2}}=\lambda^k-\tau (\lambda^k- \widetilde{\lambda}^k\ ).
\end{equation}
Similarly, the $y_j$-subproblem ($j=1,\cdots,q$) of GS-ADMM can be written
as
\[
y_j^{k+1}=\arg\min\limits_{y_j\in\mathcal{Y}_j}\left\{
g_j(y_j)-\left\langle\lambda^{k+\frac{1}{2}}, B_jy_j\right\rangle
+\frac{\beta}{2}\left\|B_jy_j - c_{y,j}\right\|^2+\frac{\sigma_2\beta}{2}\left\|B_j(y_j-y_j^k)\right\|^2
\right\},
\]
where $c_{y,j} = c - \mathcal{A}\m{x}^{k+1} - \sum\limits_{l=1,l\neq j}^{q}B_ly_l^k$.
Hence,  by Lemma \ref{opt-1}, we have $y_j^{k+1}\in\mathcal{Y}_j$ and
\[
g_j(y_j)-g_j(y_j^{k+1})+ \left\langle B_j (y_j-y_j^{k+1}),
 - \lambda^{k+\frac{1}{2}} + \beta (B_jy_j^{k+1} - c_{y,j})
 +\sigma_2\beta  B_j(y_j^{k+1}-y_j^k)
\right\rangle\geq 0
 \]
for any $y_j \in \C{Y}_j$. This inequality, by using (\ref{eq-lambda}) and
the definition of (\ref{tilde-xy}) and (\ref{tilde-lambda}), can be rewritten as
\begin{equation}\label{opt-gy}
 g_j(y_j)-g_j(\widetilde{y}_j^{k}) +
  \left\langle B_j (y_j-\widetilde{y}_j^{k}), - \widetilde{\lambda}^k+(\sigma_2+1) \beta B_j(\widetilde{y}_j^{k}-y_j^k)-\tau (\widetilde{\lambda}^k-\lambda^k)\right\rangle\geq 0
\end{equation}
for any $y_j \in \C{Y}_j$.
Besides, the equality (\ref{tilde-lambda}) can be rewritten as
\[
\left(\mathcal{A}\widetilde{\m{x}}^{k}+\mathcal{B}\widetilde{\m{y}}^{k}-c\right)-\mathcal{B}\left(\widetilde{\m{y}}^{k}-\m{y}^k\right)
+\frac{1}{\beta} (\widetilde{\lambda}^k-\lambda^k)=0,
\]
which is   equivalent to
\begin{equation}\label{opt-wlamb}
\left\langle \lambda-\widetilde{\lambda}^k, (\mathcal{A}\widetilde{\m{x}}^{k}+\mathcal{B}\widetilde{\m{y}}^{k}-c)-\mathcal{B} (\widetilde{\m{y}}^{k}-\m{y}^k)
+\frac{1}{\beta} (\widetilde{\lambda}^k-\lambda^k)\right\rangle\geq0,\ \forall \lambda\in \mathcal{R}^n.
\end{equation}
Then, (\ref{app-vi}) follows from (\ref{opt-fx}), (\ref{opt-gy}) and (\ref{opt-wlamb}). $\ \ \ \diamondsuit$
\end{proof}

\begin{lemma} \label{pre-lem}
 For the sequences  $\{\m{w}^k\}$ and $\{\widetilde{\m{w}}^k\}$ generated by  GS-ADMM, the following equality  holds
\begin{equation} \label{pre-core}
\m{w}^{k+1}=\m{w}^k-M(\m{w}^k-\widetilde{\m{w}}^k),
\end{equation}
 where
\begin{equation}\label{M}
M=\left[\begin{array}{c|cccc}
I &     &  & & \\ \hline
&   I &  & & \\
&    &\ddots  & & \\
&    &  &I & \\
&  -s\beta B_1 &\cdots  & -s\beta B_q & (\tau+s)I \\
\end{array}\right].
 \end{equation}
\end{lemma}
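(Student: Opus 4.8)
The plan is to verify the identity (\ref{pre-core}) componentwise, matching the three block rows of $M$ against the $\m{x}$-, $\m{y}$-, and $\lambda$-parts of $\m{w}^{k+1}$. The $\m{x}$- and $\m{y}$-parts are immediate from the definitions (\ref{tilde-xy}): since $\m{x}^{k+1}=\widetilde{\m{x}}^k$ and $\m{y}^{k+1}=\widetilde{\m{y}}^k$, we trivially have $\m{x}^{k+1}=\m{x}^k-I(\m{x}^k-\widetilde{\m{x}}^k)$ and $y_j^{k+1}=y_j^k-I(y_j^k-\widetilde{y}_j^k)$ for each $j=1,\dots,q$, which is exactly the action of the identity blocks of $M$ (the top block row and the $q$ middle block rows) on $\m{w}^k-\widetilde{\m{w}}^k$.

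The only real computation is the $\lambda$-component, and I would proceed as follows. Start from the last line of (\ref{GS-ADMM}), $\lambda^{k+1}=\lambda^{k+\frac12}-s\beta(\C{A}\m{x}^{k+1}+\C{B}\m{y}^{k+1}-c)$, and substitute the relation (\ref{eq-lambda}), namely $\lambda^{k+\frac12}=\lambda^k-\tau(\lambda^k-\widetilde{\lambda}^k)$. Next rewrite the constraint residual evaluated at $\m{y}^{k+1}$: from (\ref{tilde-lambda}) we have $\C{A}\m{x}^{k+1}+\C{B}\m{y}^k-c=\frac{1}{\beta}(\lambda^k-\widetilde{\lambda}^k)$, so adding and subtracting $\C{B}\m{y}^k$ gives
\[
\C{A}\m{x}^{k+1}+\C{B}\m{y}^{k+1}-c=\frac{1}{\beta}\bigl(\lambda^k-\widetilde{\lambda}^k\bigr)-\C{B}\bigl(\m{y}^k-\m{y}^{k+1}\bigr).
\]
Plugging this in and collecting terms yields $\lambda^{k+1}=\lambda^k-(\tau+s)(\lambda^k-\widetilde{\lambda}^k)+s\beta\,\C{B}(\m{y}^k-\m{y}^{k+1})$, i.e. $\lambda^{k+1}=\lambda^k-\sum_{j=1}^{q}(-s\beta B_j)(y_j^k-\widetilde{y}_j^k)-(\tau+s)(\lambda^k-\widetilde{\lambda}^k)$, which is precisely the last block row of $M$ acting on $\m{w}^k-\widetilde{\m{w}}^k$. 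Combining this with the $\m{x}$- and $\m{y}$-identities established above gives (\ref{pre-core}).

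There is no genuine obstacle here; the statement is a bookkeeping identity. The one place that requires care is the substitution of the residual: one must use (\ref{tilde-lambda}) (equivalently $\lambda^k-\widetilde{\lambda}^k=\beta(\C{A}\m{x}^{k+1}+\C{B}\m{y}^k-c)$) to convert both the $\tau$-term carried over from $\lambda^{k+\frac12}$ and the $s$-term coming from the residual at $\m{y}^{k+1}$ into expressions that are linear in $\lambda^k-\widetilde{\lambda}^k$ and $\m{y}^k-\m{y}^{k+1}$, while keeping the signs straight so that the coefficient of $\lambda^k-\widetilde{\lambda}^k$ comes out to $-(\tau+s)$ and the coefficient of $\m{y}^k-\m{y}^{k+1}$ to $+s\beta\,\C{B}$, in agreement with the definition of $M$ in (\ref{M}).
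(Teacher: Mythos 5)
Your proposal is correct and follows essentially the same route as the paper: treat the $\m{x}$- and $\m{y}$-blocks as trivial identities from (\ref{tilde-xy}), then rewrite the $\lambda$-update by combining (\ref{eq-lambda}) with the residual identity $\C{A}\m{x}^{k+1}+\C{B}\m{y}^{k}-c=\frac{1}{\beta}(\lambda^k-\widetilde{\lambda}^k)$ and collecting terms into $-(\tau+s)(\lambda^k-\widetilde{\lambda}^k)+s\beta\,\C{B}(\m{y}^k-\widetilde{\m{y}}^k)$, which is exactly the last block row of $M$. The computation and the resulting coefficients match the paper's proof, so nothing further is needed.
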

\begin{proof} It follows from the way of generating $\lambda^{k+1}$ in the algorithm and (\ref{eq-lambda}) that
\[
\begin{array}{lll}
\lambda^{k+1}  &=& \lambda^{k+\frac{1}{2}}-s\beta(\mathcal{A}\m{x}^{k+1}+\mathcal{B}\m{y}^{k+1}-c)    \\
 &=&  \lambda^{k+\frac{1}{2}}-s\beta(\mathcal{A}\m{x}^{k+1}+\mathcal{B}\m{y}^{k}-c)+s\beta \mathcal{B}(\m{y}^{k}-\m{y}^{k+1})  \\
 &=& \lambda^k-\tau(\lambda^k- \widetilde{\lambda}^k)-s(\lambda^k- \widetilde{\lambda}^k)+s\beta \mathcal{B}(\m{y}^{k}-\widetilde{\m{y}}^{k}) \\
&=& \lambda^k-\left[-s\beta \mathcal{B}(\m{y}^{k}-\widetilde{\m{y}}^{k})+(\tau+s)(\lambda^k- \widetilde{\lambda}^k) \right].
\end{array}
\]
The above equality together with $x_i^{k+1}=\widetilde{x}_l^k$, for $i=1,\cdots,p$,
and $y_j^{k+1}=\widetilde{y}_j^k$, for $j=1,\cdots,q$, imply
\[
{\footnotesize\left(\begin{array}{c}
x_1^{k+1}\\  \vdots\\ x_p^{k+1}\\ y_1^{k+1}\\  \vdots\\ y_q^{k+1}\\ \lambda^{k+1}
\end{array}\right)
=\left(\begin{array}{c}
x_1^{k}\\  \vdots\\ x_p^{k}\\ y_1^{k}\\  \vdots\\ y_q^{k}\\ \lambda^{k}
\end{array}\right)-\left[\begin{array}{ccc|cccc}
I  &   &   &  &   &   &\\
  &    \ddots & &   &   &  & \\
  &   &   I &  &   &  & \\ \hline
  &  &   &  I &  & & \\
    &  &   &   &\ddots  & & \\
        &  &   &   &  &I & \\
  &  & & -s\beta B_1 &\cdots  & -s\beta B_q & (\tau+s)I \\
\end{array}\right]\left(\begin{array}{c}
x_1^k-\widetilde{x}_1^{k}\\ \vdots\\  x_p^k-\widetilde{x}_p^{k}\\ y_1^k-\widetilde{y}_1^{k}\\ \vdots\\  y_q^k-\widetilde{y}_q^{k}\\ \lambda^{k}- \widetilde{\lambda}^{k}
\end{array}\right), }
\]
which immediately gives (\ref{pre-core}). $\ \ \ \diamondsuit$
\end{proof}

Lemma \ref{tilde-VI} and Lemma \ref{pre-lem} show that our GS-ADMM can be interpreted as a prediction-correction framework,
where  $\m{w}^{k+1}$ and $\widetilde{\m{w}}^k$ are normally  called the predictive
variable  and the correcting variable, respectively.

%=============================================================
\section{Convergence analysis of GS-ADMM}
%=============================================================
Compared with (\ref{opt-vi}) and (\ref{app-vi}),
the key to  proving  the convergence of GS-ADMM is to verify that the   extra term in (\ref{app-vi}) converges to zero, that is,
\[
\lim_{k\rightarrow \infty} \left\langle \m{w}-\widetilde{\m{w}}^k,Q  (\m{w}^k-\widetilde{\m{w}}^k) \right\rangle=0,\quad \forall \m{w}\in \mathcal{M}.
 \]
In this section, we first investigate some properties of the sequence
$\{\|\m{w}^k-\m{w}^*\|_H^2\}$. Then, we provide a lower bound of
$\|\m{w}^k-\widetilde{\m{w}}^k\|_G^2$. Based on these properties,
the global convergence and  worst-case $\mathcal{O}(1/t)$
convergence rate of GS-ADMM are established in the end.

%=============================================================
\subsection{Properties of $\{\left\|\m{w}^k-\m{w}^*\right\|_H^2\}$}
%=============================================================
%Now, we  investigate whether  the sequence $\{\m{w}^k-\m{w}^*\}$ is contractive or not.
It follows from (\ref{tilde-Jw}) and (\ref{app-vi}) that
$ \widetilde{\m{w}}^k\in\mathcal{M}$ and
\begin{equation} \label{VI31}
 \quad   h(\m{u})-h(\widetilde{\m{u}}^k)+ \left\langle \m{w}-\widetilde{\m{w}}^k,
\mathcal{J}(\m{w})\right\rangle \geq \left\langle \m{w}-\widetilde{\m{w}}^k, Q(\m{w}^k-\widetilde{\m{w}}^k)\right\rangle, \quad
\forall \m{w}\in \mathcal{M}.
\end{equation}
Suppose $\tau + s >0$. Then, the matrix $M$ defined in (\ref{M}) is nonsingular.
So, by (\ref{pre-core}) and a direct calculation,
the right-hand term of (\ref{VI31}) is  rewritten as
\begin{equation}\label{eq-VI}
\left\langle \m{w}-\widetilde{\m{w}}^k, Q(\m{w}^k-\widetilde{\m{w}}^k)\right\rangle=\left\langle \m{w}-\widetilde{\m{w}}^k,
H(\m{w}^k-\m{w}^{k+1})\right\rangle
\end{equation} where
\begin{equation} \label{H}
H=QM^{-1}=\left[\begin{array}{c|c}
H_{\mathbf{x}} & \bf{0} \\ \hline
\bf{0 } & \widetilde{H}
\end{array}\right] \end{equation}
with  $H_x$ defined in (\ref{Hx}) and
\begin{equation}\label{tilde-H}
\widetilde{H}=\left[\begin{array}{ccccccccccc|cc}
(\sigma_2+1-\frac{\tau s}{\tau+s})\beta B_1\tr B_1  &&&&\cdots &&&&-\frac{\tau s}{\tau+s} \beta B_1\tr B_q &&&&-\frac{\tau }{\tau+s} B_1\tr \\
 &&&&   &&&&  &&&&\\
   \vdots  &&&&\ddots &&&&\vdots &&&&\vdots \\
    &&&&   &&&&  &&&&\\
-\frac{\tau s}{\tau+s}\beta B_q\tr B_1  &&&&\cdots &&&&(\sigma_2+1-\frac{\tau s}{\tau+s})\beta B_q\tr B_q &&&&-\frac{\tau }{\tau+s} B_q\tr  \\
 &&&&   &&&&  &&&&\\ \hline
     -\frac{\tau }{\tau+s}B_1 &&&&\cdots &&&&-\frac{\tau }{\tau+s}B_q &&&&\frac{1 }{\beta(\tau+s)}I
\end{array}\right].
\end{equation}

The following lemma shows that $H$ is a positive definite matrix for proper choice of the parameters  $(\sigma_1,\sigma_2, \tau, s)$.
\begin{lemma} \label{spd-H}
 The matrix $H$ defined in (\ref{H}) is symmetric positive definite if
\begin{equation} \label{parameter-region}
\sigma_1\in (p-1,+\infty),\quad \sigma_2\in (q-1,+\infty),\quad \tau \le 1 \quad \mbox{and} \quad \tau+s >0.
 \end{equation}
\end{lemma}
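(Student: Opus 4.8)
The plan is to exploit the block-diagonal structure $H=\Diag(H_{\mathbf{x}},\widetilde H)$ displayed in (\ref{H}) and establish positive definiteness of the two diagonal blocks separately; symmetry of $H$ is clear by inspection of (\ref{Hx}) and (\ref{tilde-H}). Throughout I would write $D_{\C{A}}=\Diag(A_1\tr A_1,\dots,A_p\tr A_p)$ and $D_{\C{B}}=\Diag(B_1\tr B_1,\dots,B_q\tr B_q)$, which are positive definite since every $A_i$ and every $B_j$ has full column rank, and I would record at the outset that $\sigma_1>p-1\ge0$ and $\sigma_2>q-1\ge0$, so all the scalar coefficients arising below are positive.

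For $H_{\mathbf{x}}$, adding and subtracting the block-diagonal part of $\C{A}\tr\C{A}$ turns (\ref{Hx}) into $H_{\mathbf{x}}=\beta\big[(\sigma_1+1)D_{\C{A}}-\C{A}\tr\C{A}\big]$. The elementary inequality $\big\|\sum_{i=1}^{p}A_ix_i\big\|^2\le p\sum_{i=1}^{p}\|A_ix_i\|^2$ (Cauchy--Schwarz) says exactly that $\C{A}\tr\C{A}\preceq pD_{\C{A}}$, hence $H_{\mathbf{x}}\succeq\beta(\sigma_1+1-p)D_{\C{A}}\succ\m{0}$ because $\sigma_1>p-1$.

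For $\widetilde H$, I would take the Schur complement with respect to its corner block $\frac{1}{\beta(\tau+s)}I$, which is positive definite because $\tau+s>0$. Reading off from (\ref{tilde-H}) that the leading $q\times q$ block equals $\beta(\sigma_2+1)D_{\C{B}}-\frac{\tau s}{\tau+s}\beta\,\C{B}\tr\C{B}$ and the coupling block equals $-\frac{\tau}{\tau+s}\C{B}\tr$, the Schur complement is
\[
S=\beta(\sigma_2+1)D_{\C{B}}-\frac{\tau s}{\tau+s}\beta\,\C{B}\tr\C{B}-\frac{\tau^2}{\tau+s}\beta\,\C{B}\tr\C{B},
\]
and since $\frac{\tau s+\tau^2}{\tau+s}=\tau$ this collapses to $S=\beta\big[(\sigma_2+1)D_{\C{B}}-\tau\,\C{B}\tr\C{B}\big]$. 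Now $\C{B}\tr\C{B}\preceq qD_{\C{B}}$ by the same Cauchy--Schwarz estimate, and $\tau\le1$ gives $-\tau\,\C{B}\tr\C{B}\succeq-qD_{\C{B}}$ (treating $\tau\ge0$ and $\tau<0$ separately), so $S\succeq\beta(\sigma_2+1-q)D_{\C{B}}\succ\m{0}$ because $\sigma_2>q-1$. The Schur complement criterion then gives $\widetilde H\succ\m{0}$, and therefore $H\succ\m{0}$.

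I expect the Schur-complement step for $\widetilde H$ to be the only slightly delicate point: one must identify the corner and coupling blocks correctly in (\ref{tilde-H}) and notice the cancellation $\frac{\tau s}{\tau+s}+\frac{\tau^2}{\tau+s}=\tau$, which is precisely what reduces $S$ to the transparent form above; everything else is just the two-line comparison $\C{A}\tr\C{A}\preceq pD_{\C{A}}$, $\C{B}\tr\C{B}\preceq qD_{\C{B}}$ used once each, together with the positivity of the stated parameter ranges.
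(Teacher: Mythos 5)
Your proof is correct, and it is a genuine (if closely related) variant of the paper's argument rather than a copy of it. The paper proceeds by congruence: it writes $H_{\mathbf{x}}=\beta\,\Diag(A_1,\dots,A_p)\tr H_{\mathbf{x},0}\,\Diag(A_1,\dots,A_p)$ and $\widetilde H=\widetilde D\tr\widetilde H_0\widetilde D$, then block-diagonalizes $\widetilde H_0$ by conjugating with the matrix having $\tau I$ in its last block column, reducing everything to positive definiteness of the scalar-block matrices $H_{\mathbf{x},0}$ and $H_{\mathbf{y},0}+(1-\tau)EE\tr$ (which it asserts for $\sigma_1>p-1$, $\sigma_2>q-1$, $\tau\le 1$), plus $\tau+s>0$ for the corner. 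Your Schur complement with respect to the block $\frac{1}{\beta(\tau+s)}I$ is exactly that block elimination in disguise: after the $\widetilde D$-congruence, the paper's transformed leading block $H_{\mathbf{y},0}+(1-\tau)EE\tr=(\sigma_2+1)I-\tau EE\tr$ corresponds precisely to your $S=\beta\bigl[(\sigma_2+1)D_{\C{B}}-\tau\,\C{B}\tr\C{B}\bigr]$, and your cancellation $\frac{\tau s}{\tau+s}+\frac{\tau^2}{\tau+s}=\tau$ plays the role of the paper's choice of $\tau I$ in the congruence matrix. What you do differently is (i) skip the factorization through $\widetilde D$ and work with the $B_j$'s directly, and (ii) replace the appeal to positive definiteness of $H_{\mathbf{x},0}$, $H_{\mathbf{y},0}$ by the explicit comparisons $\C{A}\tr\C{A}\preceq pD_{\C{A}}$, $\C{B}\tr\C{B}\preceq qD_{\C{B}}$, together with the case split $\tau\ge 0$ versus $\tau<0$ to absorb the factor $\tau\le 1$. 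This buys a self-contained, more quantitative argument (the paper leaves the positivity of $H_{\mathbf{x},0}$, $H_{\mathbf{y},0}$ unproved, though it follows from the same Cauchy--Schwarz estimate or an eigenvalue computation), at the cost of having to identify the blocks of $\widetilde H$ correctly, which you do. Like the paper, your argument uses the standing full-column-rank assumption on the $A_i$ and $B_j$ (needed for $D_{\C{A}},D_{\C{B}}\succ\m{0}$), which is not restated in the lemma but is assumed throughout the paper, so there is no gap there.
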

\begin{proof}
By the block structure of $H$, we only need to show that the blocks $H_\mathbf{x}$ and  $\widetilde{H}$ in (\ref{H}) are positive definite
if the parameters $(\sigma_1,\sigma_2, \tau, s)$ satisfy (\ref{parameter-region}). Note that
\begin{equation}\label{Hx-decomp}
H_\mathbf{x}=\beta\left[\begin{array}{cccc}
A_1 &  & & \\
  & A_2&  &  \\
  &  &\ddots & \\
  &  &  &A_p
\end{array}\right]\tr H_{\mathbf{x},0} \left[\begin{array}{cccc}
A_1 &  & & \\
  & A_2&  &  \\
  &  &\ddots & \\
  &  &  &A_p
\end{array}\right],
\end{equation}
where
\begin{equation}\label{Hx0}
 H_{\mathbf{x},0}=\left[\begin{array}{cccc}
\sigma_1I &-I & \cdots&-I\\
-I & \sigma_1I&\cdots &-I \\
\vdots &\vdots &\ddots &\vdots\\
 -I& -I&\cdots &\sigma_1I
\end{array}\right]_{p \times p}.
\end{equation}
If $\sigma_1\in (p-1,+\infty)$, $H_{\mathbf{x},0}$ is positive definite.
Then, it follows from (\ref{Hx-decomp}) that $H_\mathbf{x}$ is positive definite
if $\sigma_1\in (p-1,+\infty)$ and  all $A_i$, $i=1, \cdots, p$,  have full column rank.

Now, note that the matrix $\widetilde{H}$ can be decomposed as
\begin{equation}\label{tildeH-decomp}
\widetilde{H}=\widetilde{D}\tr \widetilde{H}_0\widetilde{D},
\end{equation}
where
\begin{equation}\label{titl-Dw}
\widetilde{D}= \left[\begin{array}{cccc|c}
\beta^{\frac{1}{2}}B_1 &   &   &  &  \\
  &\beta^{\frac{1}{2}}B_2  &  &  &   \\
  &   & \ddots &   &    \\
  &   &   & \beta^{\frac{1}{2}}B_q &  \\ \hline
  &  &   & & \beta^{-\frac{1}{2}}I  \\
\end{array}\right]
\end{equation}
and
\[
\widetilde{H}_0=\left[\begin{array}{cccccccccccccc|cc}
\left(\sigma_2+1-\frac{\tau s}{\tau+s}\right) I  &&&& -\frac{\tau s}{\tau+s} I  &&&&\cdots &&&&-\frac{\tau s}{\tau+s} I  &&& -\frac{\tau }{\tau+s}I   \\
  &&&&   &&&& &&&&  &&&  \\
-\frac{\tau s}{\tau+s} I  &&&& \left(\sigma_2+1-\frac{\tau s}{\tau+s}\right) I  &&&&\cdots &&&&-\frac{\tau s}{\tau+s} I  &&& -\frac{\tau }{\tau+s}I   \\
 &&&&   &&&& &&&&  &&&  \\
\vdots &&&& \vdots &&&&\ddots &&&&\vdots &&& \vdots  \\
 &&&&   &&&& &&&&  &&&  \\
 -\frac{\tau s}{\tau+s} I &&&& -\frac{\tau s}{\tau+s} I  &&&&\cdots &&&&\left(\sigma_2+1-\frac{\tau s}{\tau+s}\right) I  &&& -\frac{\tau }{\tau+s}I   \\
  &&&&   &&&& &&&&  &&&  \\
 \hline
 -\frac{\tau }{\tau+s} I &&&& -\frac{\tau }{\tau+s} I  &&&&\cdots &&&&-\frac{\tau }{\tau+s} I  &&& \frac{1 }{\tau+s}I   \\
\end{array}\right].
\]
According to the fact that
 \begin{eqnarray*}
&&\left[\begin{array}{ccccc}
I  & & && \tau I \\
% & I & & &\tau I \\
 & &\ddots && \tau I \\
% & && I & \tau I \\
 & & &&  I
\end{array}\right] \ \widetilde{H}_0 \ \left[\begin{array}{ccccc}
I  & & && \tau I \\
% & I & & &\tau I \\
 & &\ddots && \tau I \\
% & && I & \tau I \\
 & & &&  I
\end{array}\right]\tr \\
& =&
\left[\begin{array}{cccccccccccccc|cc}
(\sigma_2+1-\tau)I  &&&& -\tau I &&&& \cdots&&&&-\tau I &&& {\bf 0}\\
   &&&&   &&&&  &&&&  &&&  \\
-\tau I  &&&& (\sigma_2+1-\tau)I &&&& \cdots&&&& -\tau I &&&{\bf 0}\\
 &&&&   &&&&  &&&&  &&&  \\
 \vdots&&&&\vdots &&&&\ddots &&&&\vdots&&& \vdots\\
  &&&&   &&&&  &&&&  &&&  \\
-\tau I  &&&&-\tau I  &&&&\cdots&&&& (\sigma_2+1-\tau)I &&& {\bf 0}\\
 &&&&   &&&&  &&&&  &&&  \\\hline
 {\bf 0}&&&&{\bf 0} &&&&\cdots &&&&{\bf 0}& &&\frac{1}{\tau+s} I
\end{array}\right]\\
&=& \left[\begin{array}{ccccc}
H_{\mathbf{y},0}+(1-\tau)EE\tr &&&& \bf{0} \\
  &&&&   \\
\bf{0}  &&&& \frac{1}{\tau+s} I
\end{array}\right],
\end{eqnarray*}
where
\begin{equation} \label{titl-EHy0}
E=\left[\begin{array}{c}
I  \\
I \\
\vdots\\
I
\end{array}\right] \quad \mbox{ and } \quad
H_{\mathbf{y},0}=\left[\begin{array}{cccc}
\sigma_2 I  & -I & \cdots &-I \\
-I  & \sigma_2 I  &\cdots &-I \\
 \vdots& \vdots& \ddots& \vdots\\
-I  & -I  &\cdots&  \sigma_2 I
\end{array}\right]_{q \times q},
\end{equation}
we have $\widetilde{H}_0$ is positive definite if and only if $H_{\mathbf{y},0}+(1-\tau)EE\tr$
is positive definite and $\tau+ s >0$. Note that $H_{\mathbf{y},0}$ is positive definite
if $\sigma_2\in (q-1,+\infty)$, and $(1-\tau)EE\tr$ is positive semidefinite if $\tau \le 1$.
So,  $\widetilde{H}_0$ is positive definite if
$\sigma_2\in (q-1,+\infty)$, $ \tau \le 1$ and $\tau + s >0$.
Then, it follows from (\ref{tildeH-decomp}) that $\widetilde{H}$ is positive definite,  if
$\sigma_2\in (q-1,+\infty)$,  $ \tau \le 1$, $\tau + s >0$  and all the matrices $B_j$, $j=1, \cdots, q$,  have full column rank.

Summarizing the above discussions, the matrix $H$ is positive definite if  the parameters $(\sigma_1,\sigma_2, \tau, s)$ satisfy (\ref{parameter-region}). $\ \ \ \diamondsuit$
\end{proof}

\begin{theorem} \label{ther-32}
% Let the matrices $Q, M, H$ be defined  in (\ref{Q}), (\ref{M}) and  (\ref{H}), respectively.
The sequences  $\{\m{w}^k\}$ and $\{\widetilde{\m{w}}^k\}$ generated by GS-ADMM satisfy
\begin{eqnarray}\label{Ieq-32}
  h(\m{u})-h(\widetilde{\m{u}}^k)+ \left\langle \m{w}-\widetilde{\m{w}}^k, \mathcal{J}(\m{w}) \right\rangle
& \ge & \frac{1}{2}\left\{ \left\|\m{w}-\m{w}^{k+1}\right\|_H^2-\left\|\m{w}-\m{w}^{k}\right\|_H^2\right\}\nonumber\\
&&+\frac{1}{2}\left\|\m{w}^{k}-\widetilde{\m{w}}^k\right\|_G^2, \
\forall \m{w}\in \mathcal{M},
\end{eqnarray}
where
\begin{equation}\label{eq-G}
G=Q+Q\tr -M\tr HM. \end{equation}
\end{theorem}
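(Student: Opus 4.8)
The plan is to reduce the theorem to a purely algebraic identity. Combining the variational inequality (\ref{VI31}) with the rewriting (\ref{eq-VI}) of its right-hand side, it suffices to prove that
\[
\left\langle \m{w}-\widetilde{\m{w}}^k, H(\m{w}^k-\m{w}^{k+1})\right\rangle = \frac{1}{2}\left\{\left\|\m{w}-\m{w}^{k+1}\right\|_H^2-\left\|\m{w}-\m{w}^{k}\right\|_H^2\right\} + \frac{1}{2}\left\|\m{w}^{k}-\widetilde{\m{w}}^k\right\|_G^2
\]
for every $\m{w}\in\mathcal{M}$; inserting this into (\ref{VI31}) then yields (\ref{Ieq-32}) verbatim, the only inequality being the one inherited from (\ref{VI31}). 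The structural facts needed for this step are merely that $H$ is symmetric (clear from (\ref{H})--(\ref{tilde-H}), and in any case part of Lemma \ref{spd-H}) and that $M$ is nonsingular, which holds under the standing assumption $\tau+s>0$ recorded just before (\ref{eq-VI}); positive definiteness of $H$ or $G$ plays no role here.

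To establish the identity I would split $\m{w}-\widetilde{\m{w}}^k=(\m{w}-\m{w}^k)+(\m{w}^k-\widetilde{\m{w}}^k)$ and treat the two resulting inner products separately. Expanding $\|\m{w}-\m{w}^{k+1}\|_H^2=\|(\m{w}-\m{w}^k)+(\m{w}^k-\m{w}^{k+1})\|_H^2$ and using the symmetry of $H$ gives
\[
\left\langle \m{w}-\m{w}^k, H(\m{w}^k-\m{w}^{k+1})\right\rangle = \frac{1}{2}\left\{\left\|\m{w}-\m{w}^{k+1}\right\|_H^2-\left\|\m{w}-\m{w}^{k}\right\|_H^2\right\} - \frac{1}{2}\left\|\m{w}^{k}-\m{w}^{k+1}\right\|_H^2,
\]
which already produces the telescoping difference in (\ref{Ieq-32}). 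For the remaining inner product I would substitute the prediction-correction relation $\m{w}^k-\m{w}^{k+1}=M(\m{w}^k-\widetilde{\m{w}}^k)$ from (\ref{pre-core}), turning it into the quadratic form $(\m{w}^k-\widetilde{\m{w}}^k)\tr HM(\m{w}^k-\widetilde{\m{w}}^k)$. Since $H=QM^{-1}$ gives $HM=Q$, hence $M\tr H=Q\tr$, and since a scalar equals its own transpose, this equals $\frac{1}{2}\|\m{w}^k-\widetilde{\m{w}}^k\|_{Q+Q\tr}^2$. Likewise $\|\m{w}^k-\m{w}^{k+1}\|_H^2=\|M(\m{w}^k-\widetilde{\m{w}}^k)\|_H^2=\|\m{w}^k-\widetilde{\m{w}}^k\|_{M\tr HM}^2$.

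Adding the two contributions with the factor $\frac{1}{2}$, the three $\|\m{w}^k-\widetilde{\m{w}}^k\|^2$-type terms collapse to $\frac{1}{2}\|\m{w}^k-\widetilde{\m{w}}^k\|_{Q+Q\tr-M\tr HM}^2$, which is exactly $\frac{1}{2}\|\m{w}^k-\widetilde{\m{w}}^k\|_G^2$ by the definition (\ref{eq-G}) of $G$, and the claimed identity follows. I do not expect a genuine obstacle here: the argument is a short chain of elementary quadratic-form manipulations. The only points that require a little care are (i) that $Q$ (equivalently $HM$) is not symmetric, so passing to $Q+Q\tr$ must be justified by the scalar-transpose trick rather than by symmetry of the matrix, and (ii) recording that $M^{-1}$ exists — which is precisely why $\tau+s>0$ is imposed — so that $H$, $G$ and all the above steps are well defined.
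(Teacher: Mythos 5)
Your proposal is correct and follows essentially the same route as the paper: both start from (\ref{VI31}) combined with (\ref{eq-VI}) and then use the correction relation (\ref{pre-core}) together with $HM=Q$ and the scalar-transpose symmetrization to identify the residual quadratic form as $\frac{1}{2}\|\m{w}^k-\widetilde{\m{w}}^k\|_{Q+Q\tr-M\tr HM}^2=\frac{1}{2}\|\m{w}^k-\widetilde{\m{w}}^k\|_G^2$. The only difference is organizational: the paper invokes the four-point identity $2\langle a-b,H(c-d)\rangle=\|a-d\|_H^2-\|a-c\|_H^2+\|c-b\|_H^2-\|d-b\|_H^2$, whereas you split $\m{w}-\widetilde{\m{w}}^k=(\m{w}-\m{w}^k)+(\m{w}^k-\widetilde{\m{w}}^k)$, which amounts to the same completing-the-square computation.
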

\begin{proof}
By  substituting
\[
a=\m{w},\ b=\widetilde{\m{w}}^k,\ c=\m{w}^k,\ d=\m{w}^{k+1},
\]
into the following identity
\[
2\langle a-b, H(c-d)\rangle=\|a-d\|_H^2- \|a-c\|_H^2+\|c-b\|_H^2- \|d-b\|_H^2,
\]
we have
\begin{eqnarray}\label{ident}
&& 2 \left\langle \m{w}-\widetilde{\m{w}}^k, H(\m{w}^k-\m{w}^{k+1}) \right\rangle \nonumber\\
&=& \left\|\m{w}-\m{w}^{k+1}\right\|_H^2- \left\|\m{w}-\m{w}^k\right\|_H^2
+\left\|\m{w}^{k}-\widetilde{\m{w}}^k\right\|_H^2- \left\|\m{w}^{k+1}-\widetilde{\m{w}}^k\right\|_H^2.
\end{eqnarray}
Now, notice that
\begin{eqnarray} \label{ident-H}
&& \left\|\m{w}^{k}-\widetilde{\m{w}}^k\right\|_H^2 - \left\|\m{w}^{k+1}-\widetilde{\m{w}}^k\right\|_H^2 \nonumber \\
& =&  \left\|\m{w}^{k}-\widetilde{\m{w}}^k\right\|_H^2 - \left\|\widetilde{\m{w}}^k-\m{w}^k+\m{w}^k-\m{w}^{k+1}\right\|_H^2 \nonumber \\
& =& \left\|\m{w}^{k}-\widetilde{\m{w}}^k\right\|_H^2- \left\|\widetilde{\m{w}}^k-\m{w}^k+M(\m{w}^k-\widetilde{\m{w}}^{k})\right\|_H^2  \nonumber \\
& =& \left\langle \m{w}^k-\widetilde{\m{w}}^k, \left(HM+(HM)\tr -M\tr HM\right)(\m{w}^k-\widetilde{\m{w}}^{k}) \right\rangle \nonumber \\
& = &\left\langle \m{w}^k-\widetilde{\m{w}}^k, \left(Q+Q\tr -M\tr HM\right)(\m{w}^k-\widetilde{\m{w}}^{k}) \right\rangle,
\end{eqnarray}
where the second equality holds by (\ref{pre-core}) and the fourth equality follows from (\ref{H}).
Then, (\ref{Ieq-32}) follows from (\ref{VI31})-(\ref{eq-VI}), (\ref{ident})-(\ref{ident-H}) and the definition of $G$ in (\ref{eq-G}). $\ \ \ \diamondsuit$
\end{proof}

\begin{theorem} \label{Ieq-33}
The sequences  $\{\m{w}^k\}$ and $\{\widetilde{\m{w}}^k\}$ generated by GS-ADMM satisfy
\begin{equation} \label{Contra-wH}
\left\|\m{w}^{k+1}-\m{w}^*\right\|_H^2\leq \left\|\m{w}^{k}-\m{w}^*\right\|_H^2-\left\|\m{w}^{k}-\widetilde{\m{w}}^k\right\|_G^2,\ \forall \m{w}^*\in \mathcal{M}^*.
\end{equation}
\end{theorem}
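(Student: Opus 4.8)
The plan is to obtain the contraction inequality (\ref{Contra-wH}) directly from Theorem \ref{ther-32} by specializing the free variable $\m{w}$ in (\ref{Ieq-32}) to an arbitrary solution $\m{w}^*\in\mathcal{M}^*$, and then arguing that the left-hand side of (\ref{Ieq-32}) becomes nonpositive. First I would invoke Lemma \ref{tilde-VI}, which guarantees $\widetilde{\m{w}}^k\in\mathcal{M}$, so that $\widetilde{\m{w}}^k$ is an admissible test point in the variational inequality (\ref{opt-vi}). Taking $\m{w}=\widetilde{\m{w}}^k$ in (\ref{opt-vi}) gives
\[
h(\widetilde{\m{u}}^k)-h(\m{u}^*)+\left\langle \widetilde{\m{w}}^k-\m{w}^*,\ \mathcal{J}(\widetilde{\m{w}}^k)\right\rangle\ge 0,
\]
and the skew-symmetry identity (\ref{tilde-Jw}), applied with the pair $(\widetilde{\m{w}}^k,\m{w}^*)$, allows me to replace $\mathcal{J}(\widetilde{\m{w}}^k)$ by $\mathcal{J}(\m{w}^*)$ in this inner product. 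Hence
\[
h(\m{u}^*)-h(\widetilde{\m{u}}^k)+\left\langle \m{w}^*-\widetilde{\m{w}}^k,\ \mathcal{J}(\m{w}^*)\right\rangle\le 0.
\]

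Next I would set $\m{w}=\m{w}^*$ in (\ref{Ieq-32}). Its left-hand side is then precisely the quantity shown above to be $\le 0$, since the term $\left\langle \m{w}-\widetilde{\m{w}}^k,\mathcal{J}(\m{w})\right\rangle$ in (\ref{Ieq-32}) becomes $\left\langle \m{w}^*-\widetilde{\m{w}}^k,\mathcal{J}(\m{w}^*)\right\rangle$. Consequently,
\[
0\ \ge\ \frac12\left(\left\|\m{w}^*-\m{w}^{k+1}\right\|_H^2-\left\|\m{w}^*-\m{w}^{k}\right\|_H^2\right)+\frac12\left\|\m{w}^{k}-\widetilde{\m{w}}^k\right\|_G^2 .
\]
Multiplying by $2$, using $\|\m{w}^*-\m{w}^{k+1}\|_H^2=\|\m{w}^{k+1}-\m{w}^*\|_H^2$, and rearranging gives exactly (\ref{Contra-wH}).

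I do not expect a genuine obstacle here: the heavy lifting—the prediction–correction relation (\ref{pre-core}), the identification $H=QM^{-1}$, and the crossterm bookkeeping in (\ref{ident})–(\ref{ident-H}) that produces $G=Q+Q\tr-M\tr HM$—has already been carried out in Lemma \ref{pre-lem} and Theorem \ref{ther-32}. The only points that require care are (i) verifying that $\widetilde{\m{w}}^k\in\mathcal{M}$ so it is a legitimate test vector in the VI, which is Lemma \ref{tilde-VI}, and (ii) exploiting the affine, skew-symmetric structure of $\mathcal{J}$ through (\ref{tilde-Jw}) to align the arguments $\mathcal{J}(\m{w}^*)$ and $\mathcal{J}(\widetilde{\m{w}}^k)$. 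It is worth emphasizing that this particular inequality does not need $G$ to be positive semidefinite, nor does it need positive definiteness of $H$ beyond what is required for (\ref{eq-VI}) to make sense (i.e.\ $\tau+s>0$, so that $M$ is nonsingular); the sign conditions on the parameters will only be needed later to turn (\ref{Contra-wH}) into an actual convergence statement.
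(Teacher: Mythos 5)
Your proposal is correct and follows essentially the same route as the paper: set $\m{w}=\m{w}^*$ in (\ref{Ieq-32}) and use the variational characterization of $\m{w}^*$ tested at $\widetilde{\m{w}}^k\in\mathcal{M}$ to make the left-hand side nonpositive (the paper cites (\ref{opt-vi-1}) directly, while you pass through (\ref{opt-vi}) and the skew-symmetry identity (\ref{tilde-Jw}), which is an equivalent formulation of the same step).
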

\begin{proof} Setting $\mathbf{w}=\m{w}^* \in \mathcal{M}^*$ in (\ref{Ieq-32}) we have
\[
 \frac{1}{2} \left\|\m{w}^{k+1}-\m{w}^*\right\|_H^2 \le \frac{1}{2} \left\|\m{w}^{k}-\m{w}^*\right\|_H^2 - \frac{1}{2} \left\|\m{w}^{k}-\widetilde{\m{w}}^k\right\|_G^2
+ h(\m{u}^*)-h(\widetilde{\m{u}}^k)+ \left\langle \m{w}^*-\widetilde{\m{w}}^k, \mathcal{J}(\m{w}^*) \right\rangle. \nonumber
\]
The above inequality together with  (\ref{opt-vi-1})  reduces to the inequality (\ref{Contra-wH}). $\ \ \ \diamondsuit$
\end{proof}

It can be observed that if $\left\|\m{w}^k-\widetilde{\m{w}}^k\right\|_G^2$ is positive, then
the inequality (\ref{Contra-wH}) implies the contractiveness of
the sequence $\{\m{w}^k-\m{w}^*\}$ under the $H$-weighted norm.
However, the matrix $G$ defined in (\ref{eq-G}) is not necessarily positive definite
 when $\sigma_1\in (p-1,+\infty)$, $\sigma_2\in (q-1,+\infty)$ and $(\tau, s) \in \C{K}$.
Therefore, it is necessary to estimate the lower bound of $\left\|\m{w}^k-\widetilde{\m{w}}^k\right\|_G^2$ for the sake of the   convergence analysis.

%=============================================================
\subsection{Lower bound of $\left\|\m{w}^k-\widetilde{\m{w}}^k\right\|_G^2$ }
%=============================================================
This subsection provides a lower bound of  $\left\|\m{w}^k-\widetilde{\m{w}}^k\right\|_G^2$, for
$\sigma_1\in (p-1,+\infty)$, $\sigma_2\in (q-1,+\infty)$ and $(\tau, s) \in \C{K}$, where
  $\C{K}$ is  defined in (\ref{setK}).

By   simple calculations, the $G$ given in (\ref{eq-G}) can be explicitly written as
\[
G=\left[\begin{array}{c|c}
H_{\mathbf{x}} & \bf{0} \\ \hline
\bf{0}  & \widetilde{G}
\end{array}\right],
\]
 where $H_\mathbf{x}$ is defined in (\ref{Hx}) and
\begin{equation}\label{tilde-G}
\widetilde{G}=\left[\begin{array}{ccccccccccccc|cc}
(\sigma_2+1-s)\beta B_1\tr B_1 &&&&\cdots &&&&-s\beta B_1\tr B_q &&&&&&(s-1) B_1\tr \\
      &&&&  &&&&  &&&&&&  \\
   \vdots  &&&&\ddots &&&&\vdots &&&&&&\vdots \\
   &&&&  &&&&  &&&&&&  \\
-s\beta B_q\tr B_1   &&&&\cdots &&&&(\sigma_2+1-s)\beta B_q\tr B_q &&&&&&s-1)  B_q\tr \\
&&&&  &&&&  &&&&&&  \\\hline
     (s-1) B_1 &&&&\cdots &&&&(s-1) B_q &&&&&&\frac{2-\tau-s }{\beta}I
\end{array}\right].
\end{equation}
%\[ \label{defi-Gw}
%{\footnotesize\widetilde{G}=\left[\begin{array}{cccc|c}
%(\sigma_2+1-s)\beta B_1\tr B_1 & -s\beta B_1\tr B_2 &\cdots &-s\beta B_1\tr B_q &(s-1) B_1\tr \\
%-s\beta B_2\tr B_1 &(\sigma_2+1-s)\beta B_2\tr B_2 &\cdots &-s\beta B_2\tr B_q &(s-1) B_2\tr \\
%   \vdots & \vdots &\ddots &\vdots &\vdots \\
%-s\beta B_q\tr B_1  & -s\beta B_q\tr B_2 &\cdots &(\sigma_2+1-s)\beta B_q\tr B_q &s-1)  B_q\tr \\ \hline
%     (s-1) B_1 & (s-1) B_2  &\cdots &(s-1) B_q &\frac{2-\tau-s }{\beta}I
%\end{array}\right].}
%\]
In addition, we have
\[
\widetilde{G}=\widetilde{D}\tr \widetilde{G}_0\widetilde{D},
\]
where $\widetilde{D}$ is defined in (\ref{titl-Dw}) and
\begin{equation} \label{tilde-G0}
\widetilde{G}_0=\left[\begin{array}{ccccccccccccccccc|cc}
(\sigma_2+1-s) I  &&&& -s I  &&&&\cdots &&&&-s I  &&&&&& (s-1)I   \\
  &&&&   &&&&  &&&&  &&&&&&   \\
-s I  &&&& (\sigma_2+1-s) I  &&&&\cdots &&&&-s I  &&&&&&  (s-1)I   \\
  &&&&   &&&&  &&&&  &&&&&&   \\
\vdots &&&& \vdots &&&&\ddots &&&&\vdots &&&&&& \vdots  \\
  &&&&   &&&&  &&&&  &&&&&&   \\
 -s I &&&& -s I  &&&&\cdots &&&&(\sigma_2+1-s) I  &&&&&& (s-1)I   \\
   &&&&   &&&&  &&&&  &&&&&&   \\
 \hline
 (s-1) I &&&& (s-1) I  &&&&\cdots &&&&(s-1) I  &&&&&& (2-\tau-s)I   \\
\end{array}\right].
\end{equation}

Now,  we present the following lemma which provides a lower bound of $\|\m{w}^k-\widetilde{\m{w}}^k\|_G$.
\begin{lemma}
Suppose $\sigma_1\in (p-1,+\infty)$ and $\sigma_2\in (q-1,+\infty)$.
For the sequences $\{\m{w}^k\}$ and $\{ \widetilde{\m{w}}^k \}$
generated by GS-ADMM, there exists $\xi_1>0$ such that
\begin{eqnarray} \label{lower-wG1}
 \left\|\m{w}^k-\widetilde{\m{w}}^k\right\|_G^2 &\geq& \xi_1 \left(\sum\limits_{i=1}^{p}\left\|A_i\left(x_i^k-x_i^{k+1}\right)\right\|^2
 +\sum\limits_{j=1}^{q}\left\|B_j\left(y_j^k-y_j^{k+1}\right)\right\|^2\right) \nonumber \\
&& +  (1-\tau)\beta\left\|\mathcal{B}\left(\m{y}^{k}-\m{y}^{k+1}\right)\right\|^2+ (2-\tau-s)\beta \left\|\mathcal{A}\m{x}^{k+1}+\mathcal{B}\m{y}^{k+1}-c\right\|^2 \nonumber \\
& &+   2(1-\tau)\beta\left(\mathcal{A}\m{x}^{k+1}+\mathcal{B}\m{y}^{k+1}-c\right)\tr \mathcal{B}\left(\m{y}^{k}-\m{y}^{k+1}\right).
\end{eqnarray}
\end{lemma}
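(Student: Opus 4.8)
The plan is to exploit the block-diagonal structure $G=\Diag(H_{\mathbf{x}},\widetilde{G})$ and bound the two diagonal blocks separately before translating back to the original iterates. For the $\m{x}$-block I would start from the congruence decomposition (\ref{Hx-decomp}), $H_{\mathbf{x}}=\beta\,\Diag(A_1,\dots,A_p)\tr H_{\mathbf{x},0}\,\Diag(A_1,\dots,A_p)$, and observe that the matrix $H_{\mathbf{x},0}$ in (\ref{Hx0}) equals $(\sigma_1+1)I-(\mathbf{1}_p\mathbf{1}_p\tr)\otimes I$, hence $H_{\mathbf{x},0}\succeq(\sigma_1+1-p)I$. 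Since $\sigma_1>p-1$, each $A_i$ has full column rank, and $\m{x}^k-\widetilde{\m{x}}^k=\m{x}^k-\m{x}^{k+1}$, this yields
\[
\left(\m{x}^k-\widetilde{\m{x}}^k\right)\tr H_{\mathbf{x}}\left(\m{x}^k-\widetilde{\m{x}}^k\right)\ \ge\ \beta(\sigma_1+1-p)\sum_{i=1}^{p}\left\|A_i\left(x_i^k-x_i^{k+1}\right)\right\|^2 .
\]

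For the $(\m{y},\lambda)$-block I would use the congruence $\widetilde{G}=\widetilde{D}\tr\widetilde{G}_0\widetilde{D}$ with $\widetilde{D}$ from (\ref{titl-Dw}), which turns the corresponding quadratic form into the quadratic form of $\widetilde{G}_0$ in (\ref{tilde-G0}) evaluated at the scaled vector whose blocks are $v_j:=\beta^{1/2}B_j(y_j^k-y_j^{k+1})$, $j=1,\dots,q$, and $v_0:=\beta^{-1/2}(\lambda^k-\widetilde{\lambda}^k)$. Expanding that form and using the identity $\sum_{j\ne l}\langle v_j,v_l\rangle=\big\|\sum_{j=1}^{q}v_j\big\|^2-\sum_{j=1}^{q}\|v_j\|^2$, the $-sI$ off-diagonal blocks merge with the diagonal, leaving
\[
(\sigma_2+1)\sum_{j=1}^{q}\|v_j\|^2-s\Big\|\sum_{j=1}^{q}v_j\Big\|^2+2(s-1)\Big\langle\sum_{j=1}^{q}v_j,\,v_0\Big\rangle+(2-\tau-s)\|v_0\|^2 .
\]
The Cauchy--Schwarz bound $\big\|\sum_{j}v_j\big\|^2\le q\sum_{j}\|v_j\|^2$ together with $\sigma_2>q-1$ then lets me peel off the nonnegative term $(\sigma_2+1-q)\sum_j\|v_j\|^2$ while retaining $\big\|\sum_j v_j\big\|^2$ with coefficient exactly $1$.

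It remains to revert to the original variables: using $\lambda^k-\widetilde{\lambda}^k=\beta(\mathcal{A}\m{x}^{k+1}+\mathcal{B}\m{y}^{k}-c)$ from (\ref{tilde-lambda}), the splitting $\mathcal{A}\m{x}^{k+1}+\mathcal{B}\m{y}^{k}-c=r^{k+1}+\mathcal{B}(\m{y}^k-\m{y}^{k+1})$ with $r^{k+1}:=\mathcal{A}\m{x}^{k+1}+\mathcal{B}\m{y}^{k+1}-c$, and the identity $\sum_{j}v_j=\beta^{1/2}\mathcal{B}(\m{y}^k-\m{y}^{k+1})$, I expand $\|v_0\|^2$ and $\langle\sum_j v_j,v_0\rangle$ and collect coefficients. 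The coefficient of $\|\mathcal{B}(\m{y}^k-\m{y}^{k+1})\|^2$ should collapse from $(1-s)+2(s-1)+(2-\tau-s)$ to $1-\tau$, that of $(\mathcal{A}\m{x}^{k+1}+\mathcal{B}\m{y}^{k+1}-c)\tr\mathcal{B}(\m{y}^k-\m{y}^{k+1})$ from $2(s-1)+2(2-\tau-s)$ to $2(1-\tau)$, and the $\|r^{k+1}\|^2$-coefficient is $2-\tau-s$, all multiplied by $\beta$. Combining with the $\m{x}$-block estimate and bounding the two ``pure'' quadratic pieces below by $\xi_1:=\beta\min\{\sigma_1+1-p,\ \sigma_2+1-q\}>0$ times $\sum_i\|A_i(x_i^k-x_i^{k+1})\|^2+\sum_j\|B_j(y_j^k-y_j^{k+1})\|^2$ gives exactly (\ref{lower-wG1}). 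I expect the main obstacle to be precisely this last bookkeeping: keeping the signs straight through the substitution $v_0=\beta^{1/2}\big(r^{k+1}+\mathcal{B}(\m{y}^k-\m{y}^{k+1})\big)$ and verifying the coefficient collapses above; everything else is routine congruence and eigenvalue estimates. Note the argument uses only $\sigma_1>p-1$, $\sigma_2>q-1$ and $\tau+s>0$ (so that $G$ is well defined through $M^{-1}$), consistent with the stated hypotheses, and imposes no further restriction on $(\tau,s)$.
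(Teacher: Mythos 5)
Your proposal is correct and follows essentially the same route as the paper's own proof: a block decomposition of $G$ (via $H_{\mathbf{x},0}$, $\widetilde{D}$, $\widetilde{G}_0$), the substitution $\lambda^k-\widetilde{\lambda}^k=\beta\big(\mathcal{A}\m{x}^{k+1}+\mathcal{B}\m{y}^{k+1}-c+\mathcal{B}(\m{y}^k-\m{y}^{k+1})\big)$, and the positive-definiteness margins coming from $\sigma_1>p-1$, $\sigma_2>q-1$; your coefficient collapses indeed check out, e.g.\ $(1-s)+2(s-1)+(2-\tau-s)=1-\tau$ and $2(s-1)+2(2-\tau-s)=2(1-\tau)$. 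The only (harmless) difference is that you obtain an explicit constant $\xi_1=\beta\min\{\sigma_1+1-p,\sigma_2+1-q\}$ through eigenvalue/Cauchy--Schwarz estimates, whereas the paper performs the exact congruence to $H_{\mathbf{y},0}+(1-\tau)EE\tr$ first and then invokes positive definiteness of $H_{\mathbf{x},0}$ and $H_{\mathbf{y},0}$ abstractly.
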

\begin{proof} First, it is easy to derive  that
\[
\|\m{w}^k-\widetilde{\m{w}}^k\|_G^2=\left\|\left(\begin{array}{l}
A_1\left(x_1^k-x_1^{k+1}\right)\\
\ \ \ \ \ \vdots\\
A_{p}\left(x_p^k-x_p^{k+1}\right)\\
B_1\left(y_1^k-y_1^{k+1}\right)\\
\ \ \ \ \ \vdots\\
B_{q}\left(y_q^k-y_q^{k+1}\right)\\
\mathcal{A}\m{x}^{k+1}+\mathcal{B}\m{y}^{k+1}-c
\end{array}\right)
\right\|_{ \overline{G}}^2,
\]
where
\[
\overline{G}=
\beta\left[\begin{array}{l|llll}
I & & & &  \\ \hline
&I &&&I \\
%&&I &&I \\
&&&\ddots&\vdots\\
&&& & I
\end{array}\right]
\left[\begin{array}{lllll}
H_{\mathbf{x},0} &&&&  \\
&&&& \widetilde{G}_0
\end{array}\right]\left[\begin{array}{l|llll}
I  & & & &  \\ \hline
&I &&&I \\
%&&I &&I \\
&&&\ddots&\vdots\\
&&& & I
\end{array}\right]\tr
=\beta \left[\begin{array}{lllll}
H_{\mathbf{x},0}  &&&&\\
&&&&   \overline{G}_0
\end{array}\right],
\]
with $H_{\mathbf{x},0}$ and $\widetilde{G}_0$  defined  in (\ref{Hx0}) and (\ref{tilde-G0}), respectively,
and
\[
\begin{array}{lll}
\overline{G}_0&=&
\left[\begin{array}{lllllllllllllllll|ll}
(\sigma_2+1-\tau)I  &&&& -\tau I&&&&  \cdots &&&& -\tau I &&&&&& (1-\tau) I \\
 &&&&  &&&&   &&&&  &&&&&&  \\
-\tau I&&&& (\sigma_2+1-\tau)I &&&& \cdots&&&& -\tau I &&&&&&(1-\tau) I \\
 &&&&  &&&&   &&&&  &&&&&&  \\
\ \ \vdots&&&& \ \ \vdots&&&& \ddots &&&& \ \ \vdots&&&&&&\ \ \vdots \\
 &&&&  &&&&   &&&&  &&&&&&  \\
-\tau I&&&& -\tau I&&&& \cdots&&&& (\sigma_2+1-\tau)I&&&&&&(1-\tau) I\\
 &&&&  &&&&   &&&&  &&&&&&  \\\hline
(1-\tau) I&&&& (1-\tau) I&&&& \cdots&&&&  (1-\tau) I&&&&&& (2-\tau-s) I
\end{array}\right]\\\\
&=& \left[\begin{array}{llll|ll}
&&&&& (1-\tau) I\\
&H_{\mathbf{y},0}+(1-\tau)EE\tr &&&&\ \ \vdots\\
&&&&& (1-\tau) I\\ \hline
(1-\tau) I&\ \ \ \ \ \  \cdots&(1-\tau) I&&& (2-\tau-s) I
\end{array}\right].
\end{array}
\]
In the above matrix $\overline{G}_0$, $E$ and $H_{\mathbf{y},0}$ are defined in (\ref{titl-EHy0}).
Hence, we have
\begin{eqnarray}\label{eq-wGnorm}
\frac{1}{\beta} \left\|\m{w}^k-\widetilde{\m{w}}^k\right\|_G^2
& =&   \left\|\left(\begin{array}{l}
A_1\left(x_1^k-x_1^{k+1}\right)\\
A_2\left(x_2^k-x_2^{k+1}\right)\\
\ \ \ \ \ \vdots\\
A_{p}\left(x_{p}^k-x_{p}^{k+1}\right)
\end{array}\right)
\right\|_{ H_{\mathbf{x},0}}^2
+ \left\|\left(\begin{array}{l}
B_1\left(y_1^k-y_1^{k+1}\right)\\
B_2\left(y_2^k-y_2^{k+1}\right)\\
\ \ \ \ \ \vdots\\
B_{q}\left(y_{q}^k-y_{q}^{k+1}\right)
\end{array}\right)
\right\|_{H_{\mathbf{y},0}}^2 \nonumber \\
&&  + \left\|\left(\begin{array}{l}
B_1\left(y_1^k-y_1^{k+1}\right)\\
B_2\left(y_2^k-y_2^{k+1}\right)\\
\ \ \ \ \ \vdots\\
B_{q}\left(y_{q}^k-y_{q}^{k+1}\right)
\end{array}\right)
\right\|_{(1-\tau)EE\tr}^2+(2-\tau-s) \left\|\mathcal{A}\m{x}^{k+1}+\mathcal{B}\m{y}^{k+1}-c\right\|^2 \nonumber \\
&& + 2(1-\tau) \left(\mathcal{A}\m{x}^{k+1}+\mathcal{B}\m{y}^{k+1}-c\right)\tr \mathcal{B}\left(\m{y}^{k}-\m{y}^{k+1}\right).
\end{eqnarray}
Since  $\sigma_1\in (p-1,+\infty)$ and $\sigma_2\in (q-1,+\infty)$, $H_{\mathbf{x},0}$ and $H_{\mathbf{y},0}$ are
positive definite. So, there exists a $\xi_1 >0$ such that
\begin{eqnarray}\label{first-term}
& &  \left\|\left(\begin{array}{l}
A_1\left(x_1^k-x_1^{k+1}\right)\\
A_2\left(x_2^k-x_2^{k+1}\right)\\
\ \ \ \ \ \vdots\\
A_{p}\left(x_{p}^k-x_{p}^{k+1}\right)
\end{array}\right)
\right\|_{ H_{\mathbf{x},0}}^2
+ \left\|\left(\begin{array}{l}
B_1\left(y_1^k-y_1^{k+1}\right)\\
B_2\left(y_2^k-y_2^{k+1}\right)\\
\ \ \ \ \ \vdots\\
B_{q}\left(y_{q}^k-y_{q}^{k+1}\right)
\end{array}\right)
\right\|_{H_{\mathbf{y},0}}^2 \\
& \ge & \xi_1 \left(\sum\limits_{i=1}^{p}\|A_i\left(x_i^k-x_i^{k+1}\right)\|^2+\sum\limits_{j=1}^{q}\|B_j\left(y_j^k-y_j^{k+1}\right)\|^2\right). \nonumber
\end{eqnarray}
In view of the definition of $E$ in (\ref{titl-EHy0}), we have
\[
\left\|\left(\begin{array}{l}
B_1\left(y_1^k-y_1^{k+1}\right)\\
B_2\left(y_2^k-y_2^{k+1}\right)\\
\ \ \ \ \ \vdots\\
B_{q}\left(y_{q}^k-y_{q}^{k+1}\right)
\end{array}\right)
\right\|_{(1-\tau)EE\tr}^2=(1-\tau)\left\|\mathcal{B}\left(\m{y}^{k}-\m{y}^{k+1}\right)\right\|^2.
\]
Then, the inequality (\ref{lower-wG1}) follows from (\ref{eq-wGnorm}) and (\ref{first-term}). $\ \ \ \diamondsuit$
\end{proof}

\begin{lemma} \label{Lem-36}
Suppose $ \tau>-1$. Then, the sequence $\{\m{w}^k\}$
generated by   GS-ADMM satisfies
\begin{eqnarray} \label{Ieq-ABw}
&& \left(\mathcal{A}\m{x}^{k+1}+\mathcal{B}\m{y}^{k+1}-c\right)\tr \mathcal{B}\left(\m{y}^{k}-\m{y}^{k+1}\right) \\
 &\ge & \frac{1-s}{1+\tau}\left(\mathcal{A}\m{x}^{k}+\mathcal{B}\m{y}^{k}-c\right)\tr \mathcal{B}\left(\m{y}^{k}-\m{y}^{k+1}\right)
 -\frac{\tau}{1+\tau}\left\| \mathcal{B}\left(\m{y}^{k}-\m{y}^{k+1}\right)\right\|^2 \nonumber \\
 &+& \frac{1}{2(1+\tau)\beta}\left(\left\|\m{y}^{k+1}-\m{y}^k\right\|^2_{H_\mathbf{y}}-\left\|\m{y}^{k}-\m{y}^{k-1}\right\|^2_{H_\mathbf{y}}\right). \nonumber
\end{eqnarray}
\end{lemma}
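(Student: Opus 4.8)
The plan is to derive the inequality (\ref{Ieq-ABw}) by extracting from the $y_j$-subproblem optimality conditions a relation that couples consecutive iterates, and then summing over $j$. Recall from (\ref{opt-gy}) in the proof of Lemma \ref{tilde-VI} that for each $j=1,\dots,q$ and each $y_j\in\C{Y}_j$,
\[
g_j(y_j)-g_j(\widetilde y_j^k)+\bigl\langle B_j(y_j-\widetilde y_j^k),\,-\widetilde\lambda^k+(\sigma_2+1)\beta B_j(\widetilde y_j^k-y_j^k)-\tau(\widetilde\lambda^k-\lambda^k)\bigr\rangle\ge0.
\]
Since $\widetilde y_j^k=y_j^{k+1}$, this also holds with $k$ replaced by $k-1$, giving a companion inequality at the previous iteration with test point $y_j$. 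First I would write down both inequalities, set $y_j=y_j^k$ in the first and $y_j=y_j^{k+1}$ in the second, and add them; the $g_j$ terms cancel by convexity (in fact they telescope to a sign-definite quantity that we discard), leaving an inner-product inequality in the $B_j$-images of $y_j^{k-1},y_j^k,y_j^{k+1}$ and the multipliers $\lambda^{k-1},\lambda^k,\widetilde\lambda^{k-1},\widetilde\lambda^k$.

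Next I would sum the resulting inequalities over $j=1,\dots,q$, using $\sum_j B_j(y_j^{k}-y_j^{k+1})=\C{B}(\m y^k-\m y^{k+1})$ and the definition $\|\m y\|_{H_\mathbf{y}}^2=\sum_j (\sigma_2+1)\beta\|B_j y_j\|^2$-type grouping (so that $H_\mathbf{y}$ absorbs the diagonal $(\sigma_2+1)\beta B_j\tr B_j$ blocks together with the $EE\tr$ cross-terms, exactly as $H_{\mathbf{y},0}$ arose in Lemma \ref{spd-H}). Then I would substitute $\widetilde\lambda^k=\lambda^k-\beta(\C{A}\m x^{k+1}+\C{B}\m y^k-c)$ from (\ref{tilde-lambda}) to replace all multiplier differences by constraint residuals: $\widetilde\lambda^k-\lambda^k=-\beta(\C{A}\m x^{k+1}+\C{B}\m y^k-c)$, and similarly at step $k-1$. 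Using the algebraic identity $\C{A}\m x^{k+1}+\C{B}\m y^k-c=(\C{A}\m x^{k+1}+\C{B}\m y^{k+1}-c)-\C{B}(\m y^{k+1}-\m y^k)$ converts the $\m y^k$-residual into the $\m y^{k+1}$-residual plus a $\C{B}(\m y^k-\m y^{k+1})$ term, which is what produces the $-\tau/(1+\tau)\|\C{B}(\m y^k-\m y^{k+1})\|^2$ contribution and the coefficient $(1-s)/(1+\tau)$ on the cross term; the $2\langle a-b,H(c-d)\rangle$-type identity used in Theorem \ref{ther-32} generates the telescoping $\|\m y^{k+1}-\m y^k\|_{H_\mathbf{y}}^2-\|\m y^k-\m y^{k-1}\|_{H_\mathbf{y}}^2$ difference. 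Dividing through by $(1+\tau)>0$ (the only place the hypothesis $\tau>-1$ is used, to preserve the inequality direction) yields (\ref{Ieq-ABw}).

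The main obstacle will be the bookkeeping in combining the two optimality inequalities and correctly tracking where the factor $(1+\tau)$ and the coefficients $(1-s)$ and $\tau$ appear: one must be careful that the term $-\tau(\widetilde\lambda^k-\lambda^k)$ in (\ref{opt-gy}), when expanded via (\ref{tilde-lambda}), contributes an extra multiple of the residual that, after the cross-inequality is added, reorganizes into precisely the stated right-hand side rather than some other linear combination. A secondary subtlety is justifying that the cross-quadratic terms in $\m y^{k-1},\m y^k,\m y^{k+1}$ collapse exactly into the $H_\mathbf{y}$-weighted squared differences; this follows from the $2ab = \|a\|^2+\|b\|^2-\|a-b\|^2$ polarization applied blockwise together with the structure of $H_\mathbf{y}$, and from discarding the nonnegative leftover $\|\m y^k-\m y^{k+1}\|$-type term (or a convexity-induced nonnegative term) whose omission only weakens the inequality in the correct direction. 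Everything else is routine manipulation using identities already established in Lemmas \ref{tilde-VI}–\ref{pre-lem} and Theorem \ref{ther-32}.
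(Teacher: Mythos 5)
Your proposal is correct and follows essentially the same route as the paper's proof: you pair the $y$-subproblem optimality conditions at iterations $k$ and $k-1$ (your starting point (\ref{opt-gy}) is just the equivalent reformulation of the raw conditions the paper uses), swap the test points $y_j^k$ and $y_j^{k+1}$, sum over $j$, substitute the dual updates, apply a Cauchy--Schwarz/polarization bound to obtain the telescoping $H_\mathbf{y}$ terms, and divide by $1+\tau>0$. When writing it out, make explicit that bridging the iteration-$(k-1)$ multipliers to $\lambda^k$ requires the full dual updates $\lambda^{k-\frac{1}{2}}=\lambda^{k-1}-\tau\beta(\mathcal{A}\m{x}^{k}+\mathcal{B}\m{y}^{k-1}-c)$ and $\lambda^{k}=\lambda^{k-\frac{1}{2}}-s\beta(\mathcal{A}\m{x}^{k}+\mathcal{B}\m{y}^{k}-c)$ (this is where $s$ enters), not only (\ref{tilde-lambda}), and that discarding the leftover term in the polarization step uses the positive semidefiniteness of $H_\mathbf{y}$, i.e. $\sigma_2>q-1$.
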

\begin{proof} It follows from the optimality condition of $y_l^{k+1}$-subproblem that
$y_l^{k+1}\in\mathcal{Y}_l$ and for any $y_l \in \C{Y}_l$, we have
\[
 g_l(y_l)-g_l(y_l^{k+1}) + \left\langle B_l (y_l-y_l^{k+1}),  - \lambda^{k+\frac{1}{2}}+\sigma_2\beta  B_l\left(y_l^{k+1}-y_l^k\right)
+ \beta (B_l y_l^{k+1} -c_{y,l}) \right\rangle \ge 0
\]
with $c_{y,l} = c - \C{A}\m{x}^{k+1} - \sum\limits_{j=1,j\neq l}^{q}B_jy_j^k$, which implies
\begin{eqnarray*}
&&  g_l(y_l)-g_l(y_l^{k+1})  \\
&& + \left\langle  B_l (y_l-y_l^{k+1}), - \lambda^{k+\frac{1}{2}}+\sigma_2\beta B_l\left(y_l^{k+1}-y_l^k\right)
+ \beta ( \C{A}\m{x}^{k+1}+\C{B}\m{y}^{k+1} -c) \right\rangle \\
& & -  \beta \left \langle  B_l (y_l-y_l^{k+1}),  \sum\limits_{j=1,j\neq l}^{q}B_j (y_j^{k+1} - y_j^k) \right\rangle \ge 0.
\end{eqnarray*}
For $l=1,2,\cdots,q$, letting $y_l=y_l^k$ in the above inequality and summing them together, we can deduce that
\begin{eqnarray} \label{optim-gyk}
&& \sum\limits_{l=1}^{q}\left(g_l(y_l^k)-g_l(y_l^{k+1}) \right)
 + \left\langle \C{B} (\m{y}^k-\m{y}^{k+1}), - \lambda^{k+\frac{1}{2}}+\beta
 \left(\mathcal{A}\m{x}^{k+1}+\mathcal{B}\m{y}^{k+1}-c\right)\right\rangle \\
&\ge& \left\|\m{y}^{k+1}-\m{y}^k\right\|^2_{H_\mathbf{y}}, \nonumber
\end{eqnarray}
where
\begin{eqnarray}\label{def-Hy}
H_{\mathbf{y}}&=&\beta\left[\begin{array}{ccccccccccccc}
\sigma_2 B_1\tr B_1 &&&& - B_1\tr B_2        &&&& \cdots &&&&- B_1\tr B_{q}  \\
           &&& &                &&&&  &&&&    \\
- B_2\tr B_1        &&&& \sigma_2 B_2\tr B_2 &&&& \cdots &&&&- B_2\tr B_{q}  \\
&&& &                &&&&  &&&&    \\
\vdots                &&& & \vdots                 &&&&\ddots  &&&&\vdots   \\
&&& &                &&&&  &&&&    \\
 - B_q\tr B_1 &&&&  - B_q\tr B_2 &&&&\cdots &&&&\sigma_2 B_q\tr B_q
\end{array}\right]\\
&=&\beta\left[\begin{array}{cccc}
B_1 &  & & \\
%  & B_2&  &  \\
  &  &\ddots & \\
  &  &  &B_q
\end{array}\right]\tr H_{\mathbf{y},0} \left[\begin{array}{cccc}
B_1 &  & & \\
%  & B_2&  &  \\
  &  &\ddots & \\
  &  &  &B_q
\end{array}\right] \nonumber
\end{eqnarray}
and $H_{\mathbf{y},0}$ is defined in (\ref{titl-EHy0}).
Similarly, it follows from the optimality condition of $y_l^k$-subproblem  that
\begin{eqnarray*}
&&  g_l(y_l)-g_l(y_l^k)   + \left\langle  B_l (y_l-y_l^k),-  \lambda^{k-\frac{1}{2}}+\sigma_2\beta
B_l\left(y_l^k-y_l^{k-1}\right)
+ \beta (\C{A}\m{x}^k+\C{B}\m{y}^k -c) \right\rangle \\
& & -  \beta \left \langle  B_l (y_l-y_l^k),  \sum\limits_{j=1,j\neq l}^{q}
B_j (y_j^k - y_j^{k-1}) \right\rangle \ge 0.
\end{eqnarray*}
For $l=1,2,\cdots,q$, letting $y_l=y_l^{k+1}$ in the above inequality and summing them together, we obtain
\begin{eqnarray}\label{optim-gykk}
& & \sum\limits_{l=1}^{q}\left(g_l(y_l^{k+1})-g_p(y_l^k) \right)  + \left\langle \C{B} (\m{y}^{k+1}-\m{y}^k), - \lambda^{k-\frac{1}{2}}+\beta \left(\mathcal{A}\m{x}^{k}+\mathcal{B}\m{y}^{k}-c\right) \right\rangle \\
& \ge & (\m{y}^{k}-\m{y}^{k+1})\tr {H_\mathbf{y}}(\m{y}^{k}-\m{y}^{k-1}). \nonumber
\end{eqnarray}
Since $\sigma_2 \in (q-1, \infty)$ and all $B_j$, $j=1,\cdots, q$, have full column rank, we have from (\ref{def-Hy}) that $H_\mathbf{y}$ is positive definite. Meanwhile, by the Cauchy-Schwartz inequality, we get
\begin{eqnarray}\label{HyIeq}
  \left\|\m{y}^{k+1}-\m{y}^k\right\|^2_{H_\mathbf{y}}+\left(\m{y}^{k}-\m{y}^{k+1}\right)\tr {H_\mathbf{y}}\left(\m{y}^{k}-\m{y}^{k-1}\right)
 \ge  \frac{1}{2}\left(\left\|\m{y}^{k+1}-\m{y}^k\right\|^2_{H_\mathbf{y}}-\left\|\m{y}^{k}-\m{y}^{k-1}\right\|^2_{H_\mathbf{y}}\right).
\end{eqnarray}
By adding (\ref{optim-gyk}) to (\ref{optim-gykk}) and using (\ref{HyIeq}), we achieve
\begin{eqnarray}\label{Ieq-yB}
&&  \left\langle \C{B} (\m{y}^{k}-\m{y}^{k+1}), \lambda^{k-\frac{1}{2}}-\lambda^{k+\frac{1}{2}}
+\beta (\mathcal{A}\m{x}^{k+1}+\mathcal{B}\m{y}^{k+1}-c ) \right\rangle \\
& \ge&   \langle \C{B} (\m{y}^{k}-\m{y}^{k+1}), \beta( \mathcal{A}\m{x}^{k}+\mathcal{B}\m{y}^{k}-c)\rangle
+ \frac{1}{2}\left(\left\|\m{y}^{k+1}-\m{y}^k\right\|^2_{H_\mathbf{y}}-\left\|\m{y}^{k}-\m{y}^{k-1}\right\|^2_{H_\mathbf{y}}\right). \nonumber
\end{eqnarray}
From the update of $\lambda^{k+\frac{1}{2}}$, i.e.,
$\lambda^{k+\frac{1}{2}}=\lambda^k-\tau\beta\left(\mathcal{A}\m{x}^{k+1}+\mathcal{B}\m{y}^{k}-c\right)$
and the update of $\lambda^k$, i.e.,
$\lambda^k=\lambda^{k-\frac{1}{2}}-s\beta\left(\mathcal{A}\m{x}^{k}+\mathcal{B}\m{y}^{k}-c\right),$
we have
\begin{eqnarray*}
 \lambda^{k-\frac{1}{2}}-\lambda^{k+\frac{1}{2}}
 = \tau\beta (\mathcal{A}\m{x}^{k+1}+\mathcal{B}\m{y}^{k+1}-c)
+s\beta (\mathcal{A}\m{x}^{k}+\mathcal{B}\m{y}^{k}-c)+
\tau\beta\mathcal{B} (\m{y}^k-\m{y}^{k+1}).
\end{eqnarray*}
Substituting the above inequality into the left-term of (\ref{Ieq-yB}), the proof is completed. $\ \ \ \diamondsuit$
\end{proof}

\begin{theorem} \label{Thoe-wGnorm}
Suppose $\sigma_1\in (p-1,+\infty)$, $\sigma_2\in (q-1,+\infty)$ and $\tau > -1$.
For the sequences $\{\m{w}^k\}$ and $\{ \widetilde{\m{w}}^k \}$
generated by GS-ADMM , there exists $\xi_1>0$ such that
\begin{eqnarray} \label{lbd-123}
&&  \left\|\m{w}^k-\widetilde{\m{w}}^k\right\|_G^2\\
&\ge &   \xi_1\left(\sum\limits_{i=1}^{p}\left\|A_i\left(x_i^k-x_i^{k+1}\right)\right\|^2
+\sum\limits_{j=1}^{q}\left\|B_j\left(y_j^k-y_j^{k+1}\right)\right\|^2\right) \nonumber \\
& + & (2-\tau-s)\beta\left\|\mathcal{A}\m{x}^{k+1}+\mathcal{B}\m{y}^{k+1}-c\right\|^2
+\frac{1-\tau}{1+\tau}\left(\left\|\m{y}^{k+1}-\m{y}^k\right\|^2_{H_\mathbf{y}}-\left\|\m{y}^{k}-\m{y}^{k-1}\right\|^2_{H_\mathbf{y}}\right) \nonumber \\
& + & \frac{(1-\tau)^2}{1+\tau}\beta\left\|\mathcal{B}\left(\m{y}^k-\m{y}^{k+1}\right)\right\|^2 + \frac{2(1-\tau)(1-s)}{1+\tau}\beta\left(\mathcal{A}\m{x}^{k}+\mathcal{B}\m{y}^{k}-c\right)\tr\mathcal{B}\left(\m{y}^k-\m{y}^{k+1}\right). \nonumber
\end{eqnarray}
\end{theorem}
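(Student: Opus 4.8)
The plan is to derive \eqref{lbd-123} directly from the preceding lower bound \eqref{lower-wG1} by handling the single cross term
$2(1-\tau)\beta\bigl(\mathcal{A}\m{x}^{k+1}+\mathcal{B}\m{y}^{k+1}-c\bigr)\tr\mathcal{B}\bigl(\m{y}^{k}-\m{y}^{k+1}\bigr)$ that appears there. The key device is Lemma \ref{Lem-36}, which is available since $\tau>-1$: it replaces the ``$(k+1)$-indexed'' residual $\mathcal{A}\m{x}^{k+1}+\mathcal{B}\m{y}^{k+1}-c$ inside that inner product by the ``$k$-indexed'' residual $\mathcal{A}\m{x}^{k}+\mathcal{B}\m{y}^{k}-c$, at the price of an extra $\|\mathcal{B}(\m{y}^k-\m{y}^{k+1})\|^2$ contribution and a telescoping-type difference of $H_{\mathbf{y}}$-weighted norms.

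First I would observe that under the standing assumption $(\tau,s)\in\C{K}$ we have $\tau\le 1$, hence $2(1-\tau)\beta\ge 0$; therefore multiplying both sides of the inequality in Lemma \ref{Lem-36} by $2(1-\tau)\beta$ preserves its direction and produces a lower bound for $2(1-\tau)\beta\bigl(\mathcal{A}\m{x}^{k+1}+\mathcal{B}\m{y}^{k+1}-c\bigr)\tr\mathcal{B}\bigl(\m{y}^{k}-\m{y}^{k+1}\bigr)$ consisting of three pieces: $\tfrac{2(1-\tau)(1-s)}{1+\tau}\beta\bigl(\mathcal{A}\m{x}^{k}+\mathcal{B}\m{y}^{k}-c\bigr)\tr\mathcal{B}\bigl(\m{y}^{k}-\m{y}^{k+1}\bigr)$, the term $-\tfrac{2(1-\tau)\tau}{1+\tau}\beta\|\mathcal{B}(\m{y}^k-\m{y}^{k+1})\|^2$, and $\tfrac{1-\tau}{1+\tau}\bigl(\|\m{y}^{k+1}-\m{y}^k\|^2_{H_{\mathbf{y}}}-\|\m{y}^{k}-\m{y}^{k-1}\|^2_{H_{\mathbf{y}}}\bigr)$.

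Then I would substitute this bound into \eqref{lower-wG1}. The terms $\xi_1\bigl(\sum_{i}\|A_i(x_i^k-x_i^{k+1})\|^2+\sum_{j}\|B_j(y_j^k-y_j^{k+1})\|^2\bigr)$ and $(2-\tau-s)\beta\|\mathcal{A}\m{x}^{k+1}+\mathcal{B}\m{y}^{k+1}-c\|^2$ carry over unchanged, while the remaining bookkeeping amounts to merging the two multiples of $\|\mathcal{B}(\m{y}^k-\m{y}^{k+1})\|^2$: the coefficient $(1-\tau)\beta$ from \eqref{lower-wG1} and the coefficient $-\tfrac{2(1-\tau)\tau}{1+\tau}\beta$ just obtained sum to $(1-\tau)\beta\bigl(1-\tfrac{2\tau}{1+\tau}\bigr)=\tfrac{(1-\tau)^2}{1+\tau}\beta$, which is precisely the coefficient in \eqref{lbd-123}; the $H_{\mathbf{y}}$-difference and the $k$-indexed cross term then emerge with exactly the stated factors $\tfrac{1-\tau}{1+\tau}$ and $\tfrac{2(1-\tau)(1-s)}{1+\tau}\beta$, which completes the argument.

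There is no deep obstacle here: the proof is a one-step substitution followed by collecting like terms. The only points requiring care are the sign/positivity conditions — the scaling of Lemma \ref{Lem-36} must not reverse the inequality, which is exactly why $\tau\le1$ (equivalently $1-\tau\ge0$) together with $\beta>0$ is invoked, and Lemma \ref{Lem-36} is itself only valid for $\tau>-1$ so that the denominators $1+\tau$ are positive and every manipulation is legitimate. One should also keep in mind that $\xi_1$ is the same constant furnished by \eqref{lower-wG1}, inherited from the positive definiteness of $H_{\mathbf{x},0}$ and $H_{\mathbf{y},0}$ guaranteed by $\sigma_1>p-1$ and $\sigma_2>q-1$.
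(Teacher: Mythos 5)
Your proposal is correct and follows exactly the route the paper takes: the paper's own proof is the one-line remark that \eqref{lbd-123} "is directly obtained from \eqref{lower-wG1} and \eqref{Ieq-ABw}", and you have simply carried out that substitution, including the coefficient bookkeeping $(1-\tau)\beta-\tfrac{2(1-\tau)\tau}{1+\tau}\beta=\tfrac{(1-\tau)^2}{1+\tau}\beta$, which checks out. Your explicit note that $1-\tau\ge 0$ (from $(\tau,s)\in\C{K}$) is needed so that scaling Lemma \ref{Lem-36} by $2(1-\tau)\beta$ does not reverse the inequality is a small detail the paper leaves implicit, and it is handled correctly.
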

\begin{proof} The inequality (\ref{lbd-123}) is directly obtained from (\ref{lower-wG1})  and (\ref{Ieq-ABw}). $\ \ \ \diamondsuit$
\end{proof}

The following theorem gives another variant of the lower bound of
$\|\m{w}^k-\widetilde{\m{w}}^k\|_G^2$, which plays a key role in showing the convergence of GS-ADMM.
\begin{theorem} \label{Thoe-38}
 Let the sequences  $\{\m{w}^k\}$ and $\{ \widetilde{\m{w}}^k \}$  be generated by GS-ADMM.
Then, for any
\begin{equation}\label{conv-region}
\sigma_1\in (p-1,+\infty),\quad \sigma_2\in (q-1,+\infty) \quad \mbox{and} \quad (\tau,s) \in \C{K},
 \end{equation}
where $\C{K}$ is defined in (\ref{setK}),
there exist constants $\xi_i(i=1,2)>0$ and $\xi_j(j=3,4)\ge0$, such that
\begin{eqnarray}\label{lbd-456}
 \left\|\m{w}^k-\widetilde{\m{w}}^k\right\|_G^2
& \ge &\xi_1\left(\sum\limits_{i=1}^{p}\left\|A_i\left(x_i^k-x_i^{k+1}\right)\right\|^2
+\sum\limits_{j=1}^{q}\left\|B_j\left(y_j^k-y_j^{k+1}\right)\right\|^2\right)  \\
&& + \xi_2\left\|\mathcal{A}\m{x}^{k+1}+\mathcal{B}\m{y}^{k+1}-c\right\|^2 \nonumber \\
&& + \xi_3\left(\left\|\mathcal{A}\m{x}^{k+1}+\mathcal{B}\m{y}^{k+1}-c\right\|^2-\left\|\mathcal{A}\m{x}^{k}+\mathcal{B}\m{y}^{k}-c\right\|^2\right) \nonumber\\
&& + \xi_4\left(\left\|\m{y}^{k+1}-\m{y}^k\right\|^2_{H_\mathbf{y}}-\left\|\m{y}^{k}-\m{y}^{k-1}\right\|^2_{H_\mathbf{y}}\right). \nonumber
\end{eqnarray}
\end{theorem}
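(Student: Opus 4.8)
The plan is to build (\ref{lbd-456}) directly on top of the lower bound (\ref{lbd-123}) proved in Theorem~\ref{Thoe-wGnorm}. That theorem requires $\tau>-1$, which is automatic here: substituting $\tau=-1$ into the quadratic form $-\tau^2-s^2-\tau s+\tau+s+1$ gives $-(s-1)^2\le 0$, so $\tau=-1$ — and, by the convexity of the elliptical region, every $\tau\le -1$ — is excluded from $\C{K}$. Hence, for $(\tau,s)\in\C{K}$ and $\sigma_1\in(p-1,+\infty)$, $\sigma_2\in(q-1,+\infty)$, inequality (\ref{lbd-123}) already supplies the first three lines of (\ref{lbd-456}) together with the $H_\mathbf{y}$-telescoping term; the only thing left to handle is its last line, namely the two quantities
\[
\frac{(1-\tau)^2}{1+\tau}\beta\left\|\mathcal{B}(\m{y}^k-\m{y}^{k+1})\right\|^2\quad\text{and}\quad\frac{2(1-\tau)(1-s)}{1+\tau}\beta\left(\mathcal{A}\m{x}^k+\mathcal{B}\m{y}^k-c\right)\tr\mathcal{B}(\m{y}^k-\m{y}^{k+1}),
\]
which must be absorbed into a positive multiple of $\|\mathcal{A}\m{x}^{k+1}+\mathcal{B}\m{y}^{k+1}-c\|^2$ and a telescoping term in $\|\mathcal{A}\m{x}^{k}+\mathcal{B}\m{y}^{k}-c\|^2$.

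The device I would use is a completing-the-square identity that needs \emph{no} relation between the iterates. Writing $d=\mathcal{B}(\m{y}^k-\m{y}^{k+1})$ and $r=\mathcal{A}\m{x}^k+\mathcal{B}\m{y}^k-c$, and pulling the common factor $\beta/(1+\tau)$ out (so that no division by $1-\tau$ ever occurs, which keeps the degenerate case $\tau=1$ harmless),
\[
\frac{(1-\tau)^2}{1+\tau}\beta\|d\|^2+\frac{2(1-\tau)(1-s)}{1+\tau}\beta\langle r,d\rangle+\frac{(1-s)^2}{1+\tau}\beta\|r\|^2=\frac{\beta}{1+\tau}\left\|(1-\tau)d+(1-s)r\right\|^2\ge 0.
\]
Therefore the two displayed quantities are bounded below by $-\frac{(1-s)^2}{1+\tau}\beta\|r\|^2$, and then the elementary splitting $-\|r\|^2=\big(\|\mathcal{A}\m{x}^{k+1}+\mathcal{B}\m{y}^{k+1}-c\|^2-\|r\|^2\big)-\|\mathcal{A}\m{x}^{k+1}+\mathcal{B}\m{y}^{k+1}-c\|^2$ converts $-\frac{(1-s)^2}{1+\tau}\beta\|r\|^2$ into a telescoping term with coefficient $\xi_3:=\frac{(1-s)^2}{1+\tau}\beta\ge 0$ minus $\xi_3\|\mathcal{A}\m{x}^{k+1}+\mathcal{B}\m{y}^{k+1}-c\|^2$.

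Substituting this back into (\ref{lbd-123}) and collecting terms, the coefficient of $\|\mathcal{A}\m{x}^{k+1}+\mathcal{B}\m{y}^{k+1}-c\|^2$ becomes
\[
\xi_2:=(2-\tau-s)\beta-\frac{(1-s)^2}{1+\tau}\beta=\frac{\beta}{1+\tau}\big(-\tau^2-s^2-\tau s+\tau+s+1\big)>0,
\]
where the key algebraic identity $(1+\tau)(2-\tau-s)-(1-s)^2=-\tau^2-s^2-\tau s+\tau+s+1$ is used and positivity follows exactly because $(\tau,s)\in\C{K}$ together with $1+\tau>0$; the $H_\mathbf{y}$-telescoping coefficient is $\xi_4:=\frac{1-\tau}{1+\tau}\ge 0$ since $-1<\tau\le 1$; and $\xi_1>0$ is inherited unchanged from Theorem~\ref{Thoe-wGnorm}. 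This is precisely (\ref{lbd-456}). I do not anticipate a genuine obstacle: the whole argument is a completing-the-square estimate followed by a telescoping rewrite, and the one point needing attention is simply to recognize that the positivity threshold for $\xi_2$ coincides with the defining inequality of $\C{K}$ (and to check the boundary case $\tau=1$, where the two problematic terms vanish identically and $\xi_2=\tfrac12(1-s^2)\beta>0$ since then $s\in(-1,1)$).
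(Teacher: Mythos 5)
Your proposal is correct and follows essentially the same route as the paper: starting from the bound of Theorem~\ref{Thoe-wGnorm}, absorbing the cross term via what the paper phrases as the Cauchy--Schwartz inequality (\ref{Cau-Schw}) (your completing-the-square identity is the same estimate), rewriting the residual term as a telescoping difference, and obtaining exactly the paper's constants $\xi_2$, $\xi_3$, $\xi_4$ with positivity of $\xi_2$ tied to the defining inequality of $\C{K}$ and $\tau>-1$. The only differences are cosmetic (your convexity argument for $\tau>-1$ versus the paper's algebraic rewrite $-\tau^2+(1-s)(\tau+s)+1$), so nothing further is needed.
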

\begin{proof} By the Cauchy-Schwartz inequality, we have
\begin{eqnarray} \label{Cau-Schw}
&& 2(1-\tau)(1-s)\left(\mathcal{A}\m{x}^{k}+\mathcal{B}\m{y}^{k}-c\right)\tr\mathcal{B}\left(\m{y}^k-\m{y}^{k+1}\right)\\
& \ge &
-(1-s)^2\left\|\mathcal{A}\m{x}^{k}+\mathcal{B}\m{y}^{k}-c\right\|^2-(1-\tau)^2\left\|\mathcal{B}\left(\m{y}^k-\m{y}^{k+1}\right)\right\|^2. \nonumber
\end{eqnarray}
Since
\[
 -\tau^2 - s^2 -\tau s + \tau + s + 1 = - \tau^2 + (1-s)(\tau +s) + 1,
\]
we have $\tau > -1$ when $(\tau, s) \in \C{K}$.
Then, combining (\ref{Cau-Schw}) with Theorem  \ref{Thoe-wGnorm}, we deduce
\begin{eqnarray}
\quad \left\|\m{w}^k-\widetilde{\m{w}}^k\right\|_G^2&\ge & \xi_1 \left(\sum\limits_{i=1}^{p}\left\|A_i\left(x_i^k-x_i^{k+1}\right)\right\|^2
+\sum\limits_{j=1}^{q}\left\|B_j\left(y_j^k-y_j^{k+1}\right)\right\|^2\right)\\
&& + \left(2-\tau-s-\frac{(1-s)^2}{1+\tau}\right)\beta\left\|\mathcal{A}\m{x}^{k+1}+\mathcal{B}\m{y}^{k+1}-c\right\|^2 \nonumber \\
&& +\frac{(1-s)^2}{1+\tau}\beta\left(\left\|\mathcal{A}\m{x}^{k+1}+\mathcal{B}\m{y}^{k+1}-c\right\|^2-\left\|\mathcal{A}\m{x}^{k}+\mathcal{B}\m{y}^{k}-c\right\|^2\right) \nonumber \\
&& + \frac{1-\tau}{1+\tau}\left(\left\|\m{y}^{k+1}-\m{y}^k\right\|^2_{H_\mathbf{y}}-\left\|\m{y}^{k}-\m{y}^{k-1}\right\|^2_{H_\mathbf{y}}\right), \nonumber
\end{eqnarray}
where $\xi_1 >0$ is the constant in Theorem  \ref{Thoe-wGnorm}.
Since $ -1 < \tau \le 1  $ and $ \beta > 0 $, we have
\begin{equation}\label{xi3-xi4}
\xi_3 := \frac{(1-s)^2}{1+\tau}\beta \ge 0 \quad \mbox{and} \quad \xi_4 := \frac{1-\tau}{1+\tau} \ge 0.
\end{equation}
In addition, when $(\tau, s) \in  \C{K} $, there is
\[
 -\tau^2 - s^2 -\tau s + \tau + s + 1 >0,
\]
which, by $\tau > -1$ and $\beta >0$, implies
\begin{equation}\label{xi2}
\xi_2 :=\left(2-\tau-s-\frac{(1-s)^2}{1+\tau}\right)\beta >0.
\end{equation}
Hence, the proof is completed. $\ \ \ \diamondsuit$
\end{proof}

\subsection{Global convergence}
In this subsection, we  show the global convergence  and the worst-case $O(1/t)$ convergence rate of GS-ADMM.
The following corollary is obtained directly from Theorems \ref{ther-32}-\ref{Ieq-33} and Theorem \ref{Thoe-38}.
\begin{corollary}
Let the sequences $\{\m{w}^k\}$ and $\{ \widetilde{\m{w}}^k \}$ be generated by GS-ADMM. For any
$(\sigma_1, \sigma_2, \tau, s)$ satisfying (\ref{conv-region}),
there exist constants $\xi_i(i=1,2)>0$ and $\xi_j(j=3,4)\ge0$  such that
\begin{eqnarray} \label{Final-wH}
 && \left\|\m{w}^{k+1}-\m{w}^*\right\|_H^2+ \xi_3\left\|\mathcal{A}\m{x}^{k+1}+\mathcal{B}\m{y}^{k+1}-c\right\|^2+\xi_4\left\|\m{y}^{k+1}-\m{y}^{k}\right\|^2_{H_\mathbf{y}}\\
& \le &  \left\|\m{w}^{k}-\m{w}^*\right\|_H^2+\xi_3\left\|\mathcal{A}\m{x}^{k}+\mathcal{B}\m{y}^{k}-c\right\|^2
+\xi_4\left\|\m{y}^{k}-\m{y}^{k-1}\right\|^2_{H_\mathbf{y}} \nonumber \\
& & - \xi_1\left(\sum\limits_{i=1}^{p}\|A_i\left(x_i^k-x_i^{k+1}\right)\|^2
+\sum\limits_{j=1}^{q}\left\|B_j\left(y_j^k-y_j^{k+1}\right)\right\|^2\right) \nonumber \\
& & - \xi_2\left\|\mathcal{A}\m{x}^{k+1}+\mathcal{B}\m{y}^{k+1}-c\right\|^2, \quad \forall \m{w}^*\in \mathcal{M}^*, \nonumber
\end{eqnarray}
and
\begin{eqnarray} \label{Final-fJ}
\quad && h(\m{u})-h(\widetilde{\m{u}}^k)+ \langle w-\widetilde{\m{w}}^k, \mathcal{J}(\m{w}) \rangle \\
& \ge & \frac{1}{2}\left( \|\m{w}-\m{w}^{k+1}\|_H^2+\xi_3\left\|\mathcal{A}\m{x}^{k+1}+\mathcal{B}\m{y}^{k+1}-c\right\|^2+\xi_4\left\|\m{y}^{k+1}-\m{y}^{k}\right\|^2_{H_\mathbf{y}}\right)
 \nonumber \\
&& - \frac{1}{2}\left( \|\m{w}-\m{w}^{k}\|_H^2+\xi_3\left\|\mathcal{A}\m{x}^{k}+\mathcal{B}\m{y}^{k}-c\right\|^2+\xi_4\left\|\m{y}^{k}-\m{y}^{k-1}\right\|^2_{H_\mathbf{y}}\right),
\quad \forall \m{w} \in \mathcal{M}. \nonumber
\end{eqnarray}
\end{corollary}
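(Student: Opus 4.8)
The plan is to combine the three preceding results mechanically: both displayed inequalities follow by inserting the lower bound on $\|\m{w}^k-\widetilde{\m{w}}^k\|_G^2$ from Theorem \ref{Thoe-38} into the two recursions of Theorems \ref{ther-32} and \ref{Ieq-33}, and then regrouping the telescoping pieces weighted by $\xi_3$ and $\xi_4$ so that they appear symmetrically at indices $k$ and $k+1$. First I would note that the hypothesis (\ref{conv-region}) is precisely the one under which Lemma \ref{spd-H} (so $H\succ\mathbf 0$) and Theorems \ref{Thoe-wGnorm}--\ref{Thoe-38} are valid, so all of (\ref{Contra-wH}), (\ref{Ieq-32}) and (\ref{lbd-456}) hold simultaneously with constants $\xi_1,\xi_2>0$ and $\xi_3,\xi_4\ge 0$ as in (\ref{xi3-xi4})--(\ref{xi2}).

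To get (\ref{Final-wH}), fix $\m{w}^*\in\mathcal{M}^*$ and start from Theorem \ref{Ieq-33}, i.e. $\|\m{w}^{k+1}-\m{w}^*\|_H^2\le\|\m{w}^{k}-\m{w}^*\|_H^2-\|\m{w}^{k}-\widetilde{\m{w}}^k\|_G^2$. Substitute the four-term lower bound (\ref{lbd-456}) for $\|\m{w}^{k}-\widetilde{\m{w}}^k\|_G^2$; this produces on the right-hand side the terms $-\xi_1(\sum_i\|A_i(x_i^k-x_i^{k+1})\|^2+\sum_j\|B_j(y_j^k-y_j^{k+1})\|^2)$, $-\xi_2\|\mathcal{A}\m{x}^{k+1}+\mathcal{B}\m{y}^{k+1}-c\|^2$, together with $-\xi_3(\|\mathcal{A}\m{x}^{k+1}+\mathcal{B}\m{y}^{k+1}-c\|^2-\|\mathcal{A}\m{x}^{k}+\mathcal{B}\m{y}^{k}-c\|^2)$ and $-\xi_4(\|\m{y}^{k+1}-\m{y}^k\|_{H_\mathbf{y}}^2-\|\m{y}^{k}-\m{y}^{k-1}\|_{H_\mathbf{y}}^2)$. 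Moving the "index-$(k+1)$" halves of the last two telescoping differences, namely $\xi_3\|\mathcal{A}\m{x}^{k+1}+\mathcal{B}\m{y}^{k+1}-c\|^2$ and $\xi_4\|\m{y}^{k+1}-\m{y}^k\|_{H_\mathbf{y}}^2$, over to the left-hand side yields (\ref{Final-wH}) verbatim.

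To get (\ref{Final-fJ}), start instead from Theorem \ref{ther-32}, which bounds $h(\m{u})-h(\widetilde{\m{u}}^k)+\langle\m{w}-\widetilde{\m{w}}^k,\mathcal{J}(\m{w})\rangle$ below by $\tfrac12(\|\m{w}-\m{w}^{k+1}\|_H^2-\|\m{w}-\m{w}^{k}\|_H^2)+\tfrac12\|\m{w}^{k}-\widetilde{\m{w}}^k\|_G^2$, and again invoke (\ref{lbd-456}); this time, since $\xi_1(\cdots)\ge 0$ and $\xi_2\|\cdots\|^2\ge 0$, I would simply discard those two nonnegative terms and keep only the $\xi_3$- and $\xi_4$-weighted telescoping differences. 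Absorbing $\tfrac12\xi_3\|\mathcal{A}\m{x}^{k+1}+\mathcal{B}\m{y}^{k+1}-c\|^2+\tfrac12\xi_4\|\m{y}^{k+1}-\m{y}^k\|_{H_\mathbf{y}}^2$ into the "index-$(k+1)$" bracket and $\tfrac12\xi_3\|\mathcal{A}\m{x}^{k}+\mathcal{B}\m{y}^{k}-c\|^2+\tfrac12\xi_4\|\m{y}^{k}-\m{y}^{k-1}\|_{H_\mathbf{y}}^2$ into the "index-$k$" bracket gives exactly (\ref{Final-fJ}). There is no genuine analytic difficulty here, since every ingredient is already established; the only point requiring care is the sign bookkeeping when the telescoping $\xi_3,\xi_4$ terms are transferred across the inequality, and making sure that it is legitimate to drop $\xi_1(\cdots)$ and $\xi_2\|\cdots\|^2$ in the second estimate (which it is, by their nonnegativity under (\ref{conv-region})).
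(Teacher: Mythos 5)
Your proposal is correct and follows exactly the route the paper intends: the corollary is stated there as following directly from Theorems \ref{ther-32}, \ref{Ieq-33} and \ref{Thoe-38}, i.e.\ substituting the lower bound (\ref{lbd-456}) into (\ref{Contra-wH}) and (\ref{Ieq-32}) and rearranging the $\xi_3$, $\xi_4$ telescoping terms (dropping the nonnegative $\xi_1$, $\xi_2$ terms for (\ref{Final-fJ})). Your sign bookkeeping checks out, so there is nothing to add.
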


%\begin{remark} The above corollary illustrates that for any parameters $(\sigma_1,\sigma_2,\tau, s)$ satisfying (\ref{conv-region}),
%the sequence
%\[\left\{\|\m{w}^{k}-\m{w}^*\|_H^2+\xi_3\left\|\mathcal{A}\m{x}^{k}+\mathcal{B}\m{y}^{k}-c\right\|^2+\xi_4\left\|\m{y}^{k}-\m{y}^{k-1}\right\|^2_{H_y} \right\}
% \]
% is  contractive, which implies  the convergence of the algorithm GS-ADMM.
%\end{remark}

\begin{theorem} \label{Conver-P}
Let the sequences $\{\m{w}^k\}$ and $\{ \widetilde{\m{w}}^k \}$ be generated by GS-ADMM. Then, for any $(\sigma_1, \sigma_2, \tau, s)$ satisfying (\ref{conv-region}),
we have
\begin{equation} \label{xy-diff}
\lim_{k\rightarrow \infty} \left(\sum\limits_{i=1}^{p}\left\|A_i\left(x_i^k-x_i^{k+1}\right)\right\|^2
+\sum\limits_{j=1}^{q}\left\|B_j\left(y_j^k-y_j^{k+1}\right)\right\|^2
\right)=0,
\end{equation}
\begin{equation}\label{sat-fea}
\lim_{k\rightarrow \infty} \left\|\mathcal{A}\m{x}^k+\mathcal{B}\m{y}^k-c\right\| = 0,
\end{equation}
and there exists a $\m{w}^\infty\in \mathcal{M}^*$ such that
\begin{equation}\label{conv-w}
\lim_{k\rightarrow \infty} \widetilde{\m{w}}^{k} = \m{w}^\infty.
\end{equation}
\end{theorem}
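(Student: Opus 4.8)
The plan is to read inequality (\ref{Final-wH}) of the Corollary as a Fej\'er-type monotonicity relation for a suitable Lyapunov sequence, extract from it the summable residuals in (\ref{xy-diff})--(\ref{sat-fea}) together with asymptotic regularity of the iterates, then identify a cluster point of $\{\widetilde{\m{w}}^k\}$ as a solution of $\textrm{VI}(h,\mathcal{J},\mathcal{M})$ via Theorem \ref{opt-thm}, and finally upgrade subsequential convergence to full convergence.

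\emph{A monotone Lyapunov sequence.} Fix any $\m{w}^*\in\mathcal{M}^*$ and set
\[
\Theta_k(\m{w}^*) := \left\|\m{w}^{k}-\m{w}^*\right\|_H^2+\xi_3\left\|\mathcal{A}\m{x}^{k}+\mathcal{B}\m{y}^{k}-c\right\|^2+\xi_4\left\|\m{y}^{k}-\m{y}^{k-1}\right\|^2_{H_\mathbf{y}}.
\]
Because $H\succ 0$ (Lemma \ref{spd-H}), $H_\mathbf{y}\succ 0$ and $\xi_3,\xi_4\ge 0$, we have $\Theta_k(\m{w}^*)\ge 0$; meanwhile (\ref{Final-wH}) states exactly that $\Theta_{k+1}(\m{w}^*)\le\Theta_k(\m{w}^*)$ minus the nonnegative quantity $\xi_1\sum_i\|A_i(x_i^k-x_i^{k+1})\|^2+\xi_1\sum_j\|B_j(y_j^k-y_j^{k+1})\|^2+\xi_2\|\mathcal{A}\m{x}^{k+1}+\mathcal{B}\m{y}^{k+1}-c\|^2$. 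Hence $\{\Theta_k(\m{w}^*)\}$ is nonincreasing and bounded below, so it converges; summing the inequality over $k$ and recalling $\xi_1,\xi_2>0$ (Theorem \ref{Thoe-38}) yields (\ref{xy-diff}) and, after an index shift, (\ref{sat-fea}). Boundedness of $\{\Theta_k(\m{w}^*)\}$ and $H\succ 0$ also make $\{\m{w}^k\}$ bounded.

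\emph{Asymptotic regularity.} Since every $A_i$ and $B_j$ has full column rank, (\ref{xy-diff}) forces $\m{x}^{k+1}-\m{x}^k\to 0$ and $\m{y}^{k+1}-\m{y}^k\to 0$, in particular $\mathcal{B}(\m{y}^{k}-\m{y}^{k+1})\to 0$. Using (\ref{tilde-lambda}) one writes $\widetilde{\lambda}^k-\lambda^k=-\beta(\mathcal{A}\m{x}^{k+1}+\mathcal{B}\m{y}^{k+1}-c)+\beta\mathcal{B}(\m{y}^{k+1}-\m{y}^k)$, so (\ref{sat-fea}) gives $\widetilde{\lambda}^k-\lambda^k\to 0$; feeding this and $\mathcal{B}(\m{y}^k-\widetilde{\m{y}}^k)\to 0$ into (\ref{pre-core}) gives $\lambda^{k+1}-\lambda^k\to 0$. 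Recalling $\widetilde{\m{x}}^k=\m{x}^{k+1}$, $\widetilde{\m{y}}^k=\m{y}^{k+1}$ we obtain $\m{w}^{k+1}-\m{w}^k\to 0$ and $\widetilde{\m{w}}^k-\m{w}^k\to 0$, so $\{\widetilde{\m{w}}^k\}$ is bounded as well.

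\emph{Identification of a limit and full convergence.} Take a subsequence with $\m{w}^{k_l}\to\m{w}^\infty\in\mathcal{M}$; then $\widetilde{\m{w}}^{k_l}\to\m{w}^\infty$ too. Taking $\limsup_{l\to\infty}$ in (\ref{VI31}) along this subsequence, the right-hand side $\langle\m{w}-\widetilde{\m{w}}^{k_l},Q(\m{w}^{k_l}-\widetilde{\m{w}}^{k_l})\rangle$ tends to $0$ since $\m{w}^{k_l}-\widetilde{\m{w}}^{k_l}\to 0$ and $Q$ is a fixed matrix, the inner-product term on the left converges, and lower semicontinuity of the closed proper convex function $h$ gives $\liminf_l h(\widetilde{\m{u}}^{k_l})\ge h(\m{u}^\infty)$; hence $h(\m{u})-h(\m{u}^\infty)+\langle\m{w}-\m{w}^\infty,\mathcal{J}(\m{w})\rangle\ge 0$ for all $\m{w}\in\mathcal{M}$, and Theorem \ref{opt-thm} yields $\m{w}^\infty\in\mathcal{M}^*$. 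Now apply the first step with the particular choice $\m{w}^*=\m{w}^\infty$: $\{\Theta_k(\m{w}^\infty)\}$ converges, while along $k_l$ each of its three terms tends to $0$ (the first by $\m{w}^{k_l}\to\m{w}^\infty$, the second by (\ref{sat-fea}), the third by $\m{y}^{k_l}-\m{y}^{k_l-1}\to 0$), so $\lim_k\Theta_k(\m{w}^\infty)=0$. In particular $\m{w}^k\to\m{w}^\infty$, and with $\widetilde{\m{w}}^k-\m{w}^k\to 0$ this gives (\ref{conv-w}). The step I expect to be the main obstacle is this identification: the estimates of Section 3 only guarantee that the residuals vanish, not that any limit point is optimal, so one must carefully combine $\m{w}^k-\widetilde{\m{w}}^k\to 0$ with the $\limsup$/lower-semicontinuity argument to pass to the limit in the perturbed variational inequality (\ref{VI31}) in the correct direction; the remaining parts are telescoping estimates and full-column-rank bookkeeping.
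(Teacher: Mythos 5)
Your proposal is correct and follows essentially the same route as the paper: sum the contraction inequality (\ref{Final-wH}) to obtain the vanishing residuals (\ref{xy-diff})--(\ref{sat-fea}) and boundedness of $\{\m{w}^k\}$, extract a cluster point, pass to the limit in the perturbed variational inequality to show it lies in $\mathcal{M}^*$, and then reuse (\ref{Final-wH}) with that cluster point as $\m{w}^*$ to upgrade to convergence of the whole sequence. The only (harmless) difference is that you identify the limit through the Minty-type inequality (\ref{VI31}) combined with Theorem \ref{opt-thm} and lower semicontinuity of $h$, whereas the paper passes to the limit directly in (\ref{app-vi}) to reach (\ref{Final-AB}); both arguments are valid and yield the same conclusion.
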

\begin{proof} Summing the inequality (\ref{Final-wH}) over $k=1,2,\cdots,\infty$, we have
\begin{eqnarray*}
 && \xi_1 \sum\limits_{k=1}^{\infty}  \left(\sum\limits_{i=1}^{p}\left\|A_i\left(x_i^k-x_i^{k+1}\right)\right\|^2
 +\sum\limits_{j=1}^{q}\left\|B_j\left(y_j^k-y_j^{k+1}\right)\right\|^2\right) + \xi_2 \sum\limits_{k=1}^{\infty}  \left\|\mathcal{A}\m{x}^{k+1}+\mathcal{B}\m{y}^{k+1}-c\right\|^2 \\
&\le& \|\m{w}^{1}-\m{w}^*\|_H^2+\xi_1\left\|\mathcal{A}\m{x}^{1}+\mathcal{B}\m{y}^{1}-c\right\|^2+\xi_2\left\|\m{y}^{1}-\m{y}^{0}\right\|^2_{H_\mathbf{y}},
\end{eqnarray*}
which implies that (\ref{xy-diff}) and (\ref{sat-fea}) hold  since $\xi_1 >0$ and $\xi_2 >0$.

Because $(\sigma_1, \sigma_2, \tau, s)$ satisfy (\ref{conv-region}), we have by Lemma \ref{spd-H} that $H$ is positive definite.
So, it follows from  (\ref{Final-wH}) that the sequence $\{\m{w}^{k}\}$ is uniformly bounded. Therefore, there exits
a subsequence $\{\m{w}^{k_j}\}$ converging to a point $\m{w}^\infty = (\m{x}^\infty, \m{y}^\infty, \lambda^\infty) \in \C{M}$.
In addition, by the definitions of $\widetilde{x}_k$, $\widetilde{y}_k$ and $\widetilde{\lambda}_k$ in (\ref{tilde-xy}) and (\ref{tilde-lambda}),
it follows from (\ref{xy-diff}), (\ref{sat-fea}) and the full column rank assumption of all the matrices $A_i$ and $B_j$ that
\begin{equation}\label{xylambda-diff}
\lim_{k\rightarrow \infty} x_i^k-\widetilde{x}_i^k = \m{0}, \quad  \lim_{k\rightarrow \infty} y_j^k-\widetilde{y}_j^k = \m{0} \quad \mbox{and}
\quad \lim_{k\rightarrow \infty} \lambda^k-\widetilde{\lambda}^k = \m{0},
\end{equation}
for all $i=1, \cdots, p$ and $j=1, \cdots, q$. So, we have $\lim\limits_{k \to \infty} \m{w}^k - \widetilde{\m{w}}^k = {\bf 0}$.
Thus, for any fixed $\mathbf{w} \in \C{M}$, taking $\widetilde{\m{w}}^{k_j}$ in (\ref{app-vi}) and letting $j$ go to $\infty$, we obtain
\begin{equation} \label{Final-AB}
 h(\m{u})-h(\m{u}^\infty)+ \langle \mathbf{w}- \m{w}^\infty, \mathcal{J}(\m{w}^\infty) \rangle \ge 0.
\end{equation}
Hence, $\m{w}^\infty \in \C{M}^*$ is a solution point of $\textrm{VI}(h, \mathcal{J}, \mathcal{M})$ defined in (\ref{opt-vi}).

Since (\ref{Final-wH}) holds for any $\m{w}^* \in \C{M}^*$, by (\ref{Final-wH}) and $\m{w}^\infty \in \C{M}^*$, for all $l \ge k_j$, we have
\begin{eqnarray*}
&& \left\|\m{w}^{l}-\m{w}^\infty\right\|_H^2+ \xi_3\left\|\mathcal{A}\m{x}^{l}+\mathcal{B}\m{y}^{l}-c\right\|^2+\xi_4\left\|\m{y}^{l}-\m{y}^{l-1}\right\|^2_{H_\mathbf{y}}\\
& \le &  \left\|\m{w}^{k_j}-\m{w}^\infty\right\|_H^2+\xi_3\left\|\mathcal{A}\m{x}^{k_j}
+\mathcal{B}\m{y}^{k_j}-c\right\|^2+\xi_4\left\|\m{y}^{k_j}-\m{y}^{k_j-1}\right\|^2_{H_\mathbf{y}}.
\end{eqnarray*}
This together with (\ref{sat-fea}), (\ref{xylambda-diff}), $\lim\limits_{j \to \infty} \m{w}^{k_j} = \m{w}^\infty$
and the positive definiteness of $H$ illustrate $\lim\limits_{l \to \infty} \m{w}^l = \m{w}^\infty$.
Therefore, the whole sequence $\{\m{w}^k\}$ converges to the solution $\m{w}^\infty \in \C{M}^*$. This completes the whole proof. $\ \ \ \diamondsuit$
\end{proof}

The above Theorem \ref{Conver-P} shows the  global  convergence of our GS-ADMM. Next, we show the $\C{O}(1/t)$ convergence rate for
the ergodic iterates
\begin{equation} \label{ergodic-iterate}
\m{w}_t :=\frac{1}{t}\sum_{k=1}^{t}\widetilde{\m{w}}^{k} \quad \mbox{and}
\quad \m{u}_t :=\frac{1}{t}\sum_{k=1}^{t}\widetilde{\m{u}}^{k}.
\end{equation}
\begin{theorem} \label{Conv-rate}
Let the sequences $\{\m{w}^k\}$ and $\{ \widetilde{\m{w}}^k \}$ be generated by GS-ADMM. Then,  for any  $(\sigma_1, \sigma_2, \tau, s)$ satisfying (\ref{conv-region}),
there exist $\xi_j(j =3,4) \ge 0$ such that
\begin{eqnarray}\label{ut-conv}
&&  h(\m{u}_t)-h(\m{u})+ \langle \m{w}_t-\m{w}, \mathcal{J}(\m{w})\rangle\\
& \le & \frac{1}{2t}\left( \left\|\m{w}-\m{w}^{1}\right\|_H^2+\xi_3\left\|\mathcal{A}\m{x}^{1}+\mathcal{B}\m{y}^{1}-c\right\|^2
+\xi_4\left\|\m{y}^{1}-\m{y}^{0}\right\|^2_{H_\mathbf{y}}\right), \quad
 \forall \mathbf{w}\in \mathcal{M}. \nonumber
\end{eqnarray}
\end{theorem}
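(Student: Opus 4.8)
The goal is to establish the worst-case $\C{O}(1/t)$ ergodic rate (\ref{ut-conv}), and the natural route is to sum the second estimate (\ref{Final-fJ}) of the above Corollary over $k=1,\dots,t$ and exploit the fact that its right-hand side telescopes. Introduce the shorthand
\[
\theta_k := \left\|\m{w}-\m{w}^{k}\right\|_H^2+\xi_3\left\|\mathcal{A}\m{x}^{k}+\mathcal{B}\m{y}^{k}-c\right\|^2+\xi_4\left\|\m{y}^{k}-\m{y}^{k-1}\right\|^2_{H_\mathbf{y}},
\]
so that (\ref{Final-fJ}) reads $h(\m{u})-h(\widetilde{\m{u}}^k)+\langle \m{w}-\widetilde{\m{w}}^k,\mathcal{J}(\m{w})\rangle \ge \frac{1}{2}(\theta_{k+1}-\theta_k)$ for every $\m{w}\in\mathcal{M}$. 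Summing over $k=1,\dots,t$ collapses the telescoping sum and, after discarding the nonnegative boundary term $\theta_{t+1}$, gives
\[
\sum_{k=1}^{t}\left( h(\m{u})-h(\widetilde{\m{u}}^k)+\langle \m{w}-\widetilde{\m{w}}^k,\mathcal{J}(\m{w})\rangle \right) \ge \frac{1}{2}(\theta_{t+1}-\theta_1) \ge -\frac{1}{2}\theta_1.
\]
That $\theta_{t+1}\ge 0$ is where the sign conditions enter: $H$ is positive definite by Lemma \ref{spd-H}, $H_\mathbf{y}$ is positive semidefinite, and $\xi_3,\xi_4\ge 0$ by (\ref{xi3-xi4}).

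The second step is to pass from the running averages to the ergodic iterates. Since $\widetilde{\m{w}}^k\in\mathcal{M}$ (Lemma \ref{tilde-VI}) and $\mathcal{M}$ is convex, the average $\m{w}_t=\frac{1}{t}\sum_{k=1}^t\widetilde{\m{w}}^k$ lies in $\mathcal{M}$ and $\m{u}_t\in\C{X}\times\C{Y}$, so the left-hand side of (\ref{ut-conv}) is well defined. Dividing the last display by $t$, the affine term is handled by linearity, $\frac{1}{t}\sum_{k=1}^t\langle \m{w}-\widetilde{\m{w}}^k,\mathcal{J}(\m{w})\rangle=\langle \m{w}-\m{w}_t,\mathcal{J}(\m{w})\rangle$, and the objective term by convexity of $h$ (Jensen's inequality), $\frac{1}{t}\sum_{k=1}^t h(\widetilde{\m{u}}^k)\ge h(\m{u}_t)$. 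Putting these together yields $h(\m{u})-h(\m{u}_t)+\langle \m{w}-\m{w}_t,\mathcal{J}(\m{w})\rangle\ge -\frac{1}{2t}\theta_1$, and rearranging (using $\langle \m{w}-\m{w}_t,\mathcal{J}(\m{w})\rangle=-\langle \m{w}_t-\m{w},\mathcal{J}(\m{w})\rangle$) gives exactly (\ref{ut-conv}), because $\theta_1$ coincides with the bracketed expression on the right-hand side of (\ref{ut-conv}).

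I do not expect a genuine obstacle here: this is the standard variational-inequality argument for ergodic ADMM rates, with all the real work front-loaded into (\ref{Final-fJ}). The points that need care are (i) that $\mathcal{J}$ is evaluated at the fixed test point $\m{w}$ rather than at $\widetilde{\m{w}}^k$ — this is precisely what makes the averaging step legitimate, and it was arranged when deriving (\ref{Final-fJ}) through the skew-symmetry identity (\ref{tilde-Jw}); (ii) checking that the boundary term $\theta_{t+1}$ may be dropped, which requires $\xi_3,\xi_4\ge 0$ and the semidefiniteness of $H$ and $H_\mathbf{y}$; and (iii) keeping track of signs so the final inequality points in the stated direction and the constant matches the right-hand side of (\ref{ut-conv}). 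Combined with Theorem \ref{Conver-P}, this shows that the residual $h(\m{u}_t)-h(\m{u})+\langle\m{w}_t-\m{w},\mathcal{J}(\m{w})\rangle$ for $\textrm{VI}(h,\mathcal{J},\mathcal{M})$ in (\ref{opt-vi}), which measures how far $\m{w}_t$ is from solving the problem, decays at the rate $\C{O}(1/t)$.
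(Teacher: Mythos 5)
Your proposal is correct and follows essentially the same route as the paper's own proof: sum (\ref{Final-fJ}) over $k=1,\dots,t$, telescope and drop the nonnegative terminal term (using positive definiteness of $H$ and $H_\mathbf{y}$ together with $\xi_3,\xi_4\ge 0$), divide by $t$, and apply convexity of $h$ to pass to the ergodic iterates. The sign bookkeeping and the identification of $\theta_1$ with the right-hand side of (\ref{ut-conv}) are handled correctly, so nothing further is needed.
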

\begin{proof}
For $k=1, \cdots, t$, summing the inequality (\ref{Final-fJ}), we have
\begin{eqnarray} \label{Fin-123}
 && t h(\m{u}) - \sum\limits_{k=1}^{t}h(\widetilde{\m{u}}^k)+\left\langle t \m{w} -
 \sum\limits_{k=1}^{t} \widetilde{\m{w}}^k, \mathcal{J}(\m{w})\right\rangle\\
 & \ge & \frac{1}{2}\left( \left\|\m{w}-\m{w}^{t+1}\right\|_H^2+\xi_3 \left\|\mathcal{A}\m{x}^{t+1}+\mathcal{B}\m{y}^{t+1}-c\right\|^2+\xi_4\left\|\m{y}^{t+1}-\m{y}^{t}\right\|^2_{H_\mathbf{y}}\right)
  \nonumber \\
&& -  \frac{1}{2}\left( \|\m{w}-\m{w}^{1}\|_H^2+\xi_3\left\|\mathcal{A}\m{x}^{1}+\mathcal{B}\m{y}^{1}-c\right\|^2+\xi_4\left\|\m{y}^{1}-\m{y}^{0}\right\|^2_{H_\mathbf{y}}\right),
\quad \forall \m{w}\in \mathcal{M}. \nonumber
\end{eqnarray}
Since  $(\sigma_1, \sigma_2, \tau, s)$ satisfy (\ref{conv-region}),
 $H_\mathbf{y}$ is positive definite. And by Lemma \ref{spd-H}, $H$ is
also positive definite. So, it follows from (\ref{Fin-123}) that
\begin{eqnarray}\label{Fin-Ieq}
 && \frac{1}{t}\sum\limits_{k=1}^{t}h(\widetilde{\m{u}}^k)-h(\m{u})+\left\langle \frac{1}{t}\sum\limits_{k=1}^{t}\widetilde{\m{w}}^k-\m{w}, \mathcal{J}(\m{w})\right\rangle\\
 & \le &
\frac{1}{2t} ( \left\|\m{w}-\m{w}^{1}\right\|_H^2+\xi_3\left\|\mathcal{A}\m{x}^{1}+\mathcal{B}\m{y}^{1}-c\right\|^2
+\xi_4\left\|\m{y}^{1}-\m{y}^{0}\right\|^2_{H_\mathbf{y}} ), \quad
\forall \m{w} \in \mathcal{M}. \nonumber
\end{eqnarray}
By the convexity of $h$ and (\ref{ergodic-iterate}),
we have
\[
 h(\m{u}_t)\leq \frac{1}{t}\sum_{k=1}^{t} h(\widetilde{\m{u}}^{k}).
\]
Then, (\ref{ut-conv}) follows from (\ref{Fin-Ieq}). $\ \ \ \diamondsuit$
\end{proof}
\begin{remark}
In the above Theorem \ref{Conver-P} and Theorem \ref{Conv-rate}, we assume the parameters $(\sigma_1, \sigma_2, \tau, s)$ satisfy (\ref{conv-region}).
However, because of the symmetric role played by the $x$ and $y$ iterates in the GS-ADMM, substituting the index $k+1$ by $k$ for the $x$ and $\lambda$ iterates,
the GS-ADMM algorithm (\ref{GS-ADMM})  can be clearly written as
\begin{equation}\label{GS-ADMM-1}
\  \left \{\begin{array}{lll}
 \textrm{For}\ j=1,2,\cdots,q,\\
 \quad y_{j}^{k+1}=\arg\min\limits_{y_{j}\in\mathcal{Y}_{j}} \mathcal{L}_\beta(\m{x}^k,y_1^k,\cdots, y_j,\cdots,y_q^k,\lambda^{k-\frac{1}{2}}) + Q_j^k(y_j), \\
 \quad \textrm{where } Q_j^k(y_j) =
\frac{\sigma_2\beta}{2}\left\|B_{j}(y_{j}-y_{j}^k)\right\|^2,\\
\lambda^{k}=\lambda^{k-\frac{1}{2}}-s\beta(\mathcal{A}\m{x}^k+\mathcal{B}\m{y}^{k+1}-c) \\ \\
 \textrm{For}\ i=1,2,\cdots,p,\\
 \quad x_{i}^{k+1}=\arg\min\limits_{x_{i}\in\mathcal{X}_{i}} \mathcal{L}_\beta (x_1^k,\cdots, x_i,\cdots,x_p^k,\m{y}^{k+1},\lambda^k)
+P_i^k(x_i), \\
\quad \textrm{where } P_i^k(x_i) =
\frac{\sigma_1\beta}{2}\left\|A_{i}(x_{i}-x_{i}^k)\right\|^2,\\
 \lambda^{k+\frac{1}{2}}=\lambda^k-\tau\beta(\mathcal{A}\m{x}^{k+1}+\mathcal{B}\m{y}^{k+1}-c).
\end{array}\right.
\end{equation}
So, by applying Theorem \ref{Conver-P} and Theorem \ref{Conv-rate} on the algorithm (\ref{GS-ADMM-1}), it also converges and has the $\C{O}(1/t)$ convergence rate when
 $(\sigma_1, \sigma_2, \tau, s)$ satisfy
\begin{equation}\label{conv-region-2}
\sigma_1\in (p-1,+\infty),\quad \sigma_2\in (q-1,+\infty) \quad \mbox{and} \quad (\tau,s) \in \overline{\C{K}},
 \end{equation}
where
\begin{equation}\label{barK}
 \overline{\C{K}} = \left\{ (\tau, s) \ | \ \tau + s >0, \ s \le 1, \ -\tau^2 - s^2 -\tau s + \tau + s + 1 >0 \right\}.
\end{equation}
Hence, the convergence domain $\C{K}$ in Theorem \ref{Conver-P} and Theorem \ref{Conv-rate} can be easily enlarged to the symmetric domain, shown in Fig. 2,
\begin{equation}\label{setG}
\C{G}  = \C{K} \cup \overline{\C{K}}
     =  \left\{ (\tau, s) \ | \ \tau + s >0,\ -\tau^2 - s^2 -\tau s + \tau + s + 1 >0 \right\}.
\end{equation}
\end{remark}

\begin{figure}[htbp]\label{figG}
 \begin{minipage}{1\textwidth}
 \centering
\resizebox{11cm}{8.cm}{\includegraphics{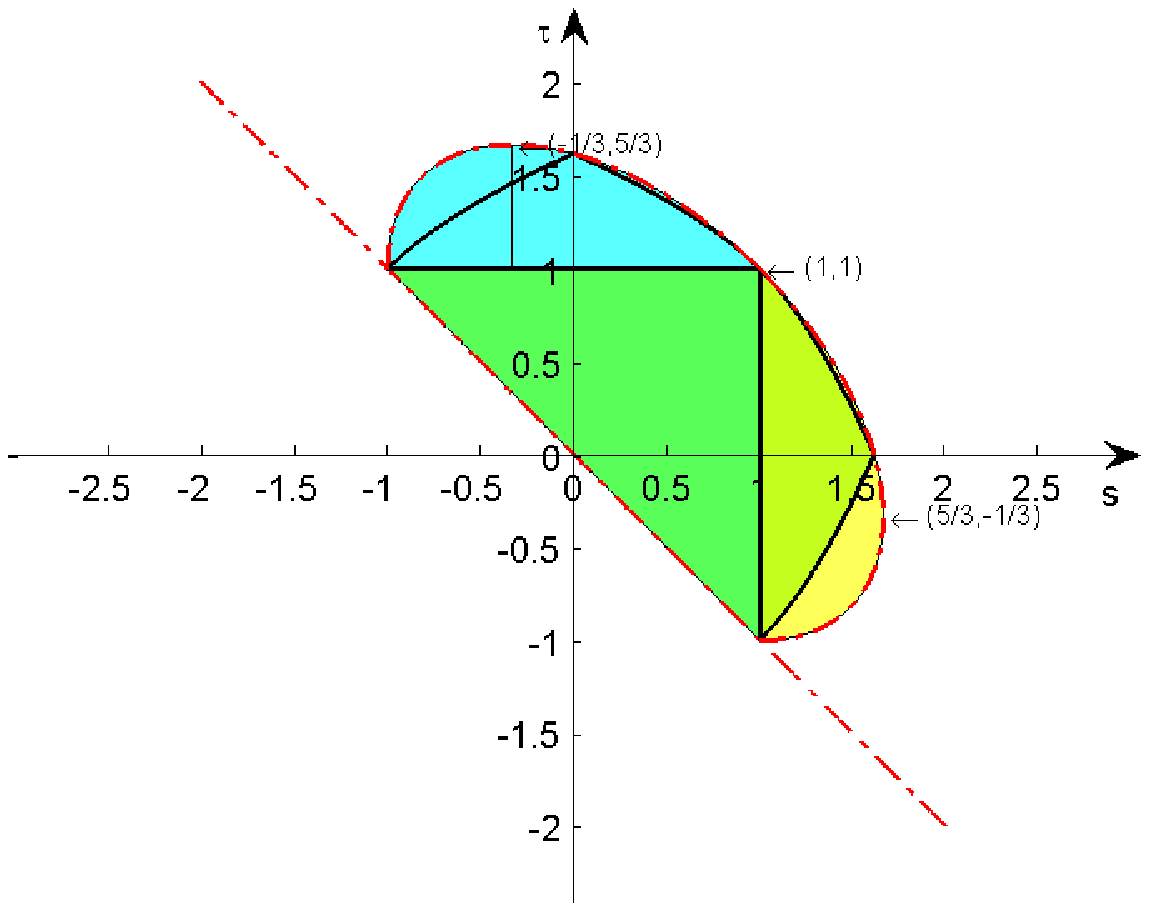}}
%\caption{\footnotesize Stepsize region $\mathcal{G}$ of GS-ADMM.}
   \end{minipage}
\begin{center}
{\footnotesize Fig 2. Stepsize region $\mathcal{G}$ of GS-ADMM}
\end{center}
\end{figure}

\begin{remark}\label{rremar}
  Theorem \ref{Conver-P} implies that
we can   apply the following easily usable  stopping criterion for GS-ADMM:
\[
\max\left\{\left\|x_i^k-x_i^{k+1}\right\|_\infty,\left\|y_j^k-y_j^{k+1}\right\|_\infty, \left\|\mathcal{A}\m{x}^{k+1}+\mathcal{B}\m{y}^{k+1}-c\right\|_\infty\right\}\leq tol,
\]
where $tol >0$ is a given tolerance error.
One the other hand, Theorem \ref{Conv-rate} tells us that for any given compact set
$\mathcal{K}\subset \mathcal{M}$, if
\[
\eta = \sup_{\m{w}\in\mathcal{K}}\left\{ \|\mathbf{w}-\m{w}^{1}\|_H^2+\xi_3\left\|\mathcal{A}\m{x}^{1}+\mathcal{B}\m{y}^{1}-c\right\|^2+\xi_4\left\|\m{y}^{1}-\m{y}^{0}\right\|^2_{H_\mathbf{y}}\right\} < \infty,
 \]
we have
\[h(\m{u}_t)-h(\m{u})+ \left\langle \m{w}_t-\m{w}, \mathcal{J}(\m{w})\right\rangle \leq \frac{\eta}{2t},
\]
which shows that  GS-ADMM has a worst-case
$\mathcal{O}(1/t)$ convergence rate in an ergodic sense.
\end{remark}

%=============================================================
\section{Two special cases of GS-ADMM}
%=============================================================
Note that in GS-ADMM (\ref{GS-ADMM}), the two proximal parameters $\sigma_1$ and $\sigma_2$ are required to be strictly positive for the generalized
$p+q$ block separable convex programming. However, when taking $\sigma_1=\sigma_2=0$ for the two-block problem, i.e.,  $p=q=1$, GS-ADMM would reduce to
the scheme (\ref{he-admm}), which is globally convergent \cite{HeMaYuan2016}.
Such observations motivate us to  further investigate the following two special cases:
\[ \begin{array}{l}
\textrm{(a) GS-ADMM (\ref{GS-ADMM}) with}\ p\geq1, q=1, \sigma_1 \in (p-1, \infty)
\mbox{ and } \sigma_2=0;\\
\textrm{(b) GS-ADMM (\ref{GS-ADMM}) with}\ p=1, q\geq1, \sigma_1=0 \mbox{ and }
 \sigma_2 \in (q-1, \infty).
\end{array}
\]

\subsection{Convergence for the case (a)}
The corresponding GS-ADMM for the first case (a) can be simplified as follows:
\begin{equation}\label{GS-ADMM-a}
\quad\  \left \{\begin{array}{lll}
 \textrm{For}\ i=1,2,\cdots,p,\\
 \quad x_{i}^{k+1}=\arg\min\limits_{x_{i}\in\mathcal{X}_{i}} \mathcal{L}_\beta\left(x_1^k,\cdots, x_i,\cdots,x_p^k,y^k,\lambda^k\right)
+P_i^k(x_i), \\
\quad \textrm{where } P_i^k(x_i) =
\frac{\sigma_1\beta}{2}\left\|A_{i}\left(x_{i}-x_{i}^k\right)\right\|^2,\\
 \lambda^{k+\frac{1}{2}}=\lambda^k-\tau\beta\left(\mathcal{A}\m{x}^{k+1}+By^{k}-c\right),\\
y^{k+1}=\arg\min\limits_{y\in\mathcal{Y}} \mathcal{L}_\beta\left(\m{x}^{k+1},y,\lambda^{k+\frac{1}{2}}\right), \\
\lambda^{k+1}=\lambda^{k+\frac{1}{2}}-s\beta\left(\mathcal{A}\m{x}^{k+1}+By^{k+1}-c\right).
\end{array}\right.
\end{equation}
And, the corresponding matrices $Q,M,H$ and $G$ in this case are the following:
\begin{equation}\label{Q-a}
Q=\left[\begin{array}{cc|cc}
H_\mathbf{x} & & &{\bf 0}\\
\hline
{\bf 0} & & &\widetilde{Q}
\end{array}\right],
\end{equation}
where $H_\mathbf{x}$ is defined in (\ref{Hx}) and
\begin{equation}\label{tilde-Q-a}
\widetilde{Q} =\left[\begin{array}{c cc}
\beta B\tr B & \quad & -\tau B\tr \\
 -B & \quad & \frac{1}{\beta} I
\end{array}\right],
\end{equation}
\begin{equation}\label{M-a}
M=\left[\begin{array}{c|ccc}
I   &   &   \\ \hline
    & I  &  & \\
    & -s\beta B & \quad & (\tau+s)I \\
\end{array}\right],
 \end{equation}
\begin{equation}\label{H-a}
 H=QM^{-1}=\left[\begin{array}{c|cc c}
H_\mathbf{x} &   &  \\ \hline
&\left( 1- \frac{\tau s}{\tau+s} \right)\beta B\tr B & \quad &-\frac{\tau}{\tau+s} B\tr  \\
& & & \\
& -\frac{\tau}{\tau+s}B & \quad & \frac{1}{(\tau+s)\beta} I \\
\end{array}\right],
\end{equation}
\begin{equation}\label{G-a}
{\footnotesize
G=Q+Q\tr -M\tr HM=
\left[\begin{array}{c |cc c}
H_\mathbf{x}  &   &   \\ \hline
 &(1-s)\beta B\tr B & \quad & (s-1) B\tr \\
& & & \\
 & (s-1)B & \quad & \frac{2-\tau-s}{\beta} I  \\
\end{array}\right]. }\end{equation}
It can be verified that $H$ in (\ref{H-a}) is positive definite as long as
\[
\sigma_1\in(p-1,+\infty), \qquad (\tau,s) \in \{(\tau,s) \ | \ \tau +s>0, \tau<1 \}.
\]
Analogous to (\ref{eq-wGnorm}), we have
\begin{eqnarray} \label{wGnorm-a}
&& \quad \frac{1}{\beta} \|\m{w}^k-\widetilde{\m{w}}^k\|_G^2 \\
&=& \left\|\left(\begin{array}{l}
A_1\left(x_1^k-x_1^{k+1}\right)\\
A_2\left(x_2^k-x_2^{k+1}\right)\\
\ \ \ \ \ \vdots\\
A_{p}\left(x_{p}^k-x_{p}^{k+1}\right)
\end{array}\right)
\right\|_{  H_{\mathbf{x},0}}^2 + (2-\tau-s)\left\|\mathcal{A}\m{x}^{k+1}+B y^{k+1}-c\right\|^2 \nonumber\\
&  &  +(1-\tau)\left\|B\left(y^k
-y^{k+1}\right)\right\|^2+ 2(1-\tau) \left(\mathcal{A}\m{x}^{k+1}+By^{k+1}-c\right)\tr B\left(y^k
-y^{k+1}\right) \nonumber \\
&\ge & \xi_1 \sum\limits_{i=1}^{p}\left\|A_i\left(x_i^k-x_i^{k+1}\right)\right\|^2+(2-\tau-s)\left\|\mathcal{A}\m{x}^{k+1}+By^{k+1}-c\right\|^2 \nonumber \\
&&  +(1-\tau) \left\|B\left(y^k
-y^{k+1}\right)\right\|^2+ 2(1-\tau) \left(\mathcal{A}\m{x}^{k+1}+B y^{k+1}-c\right)\tr B\left(y^k
-y^{k+1}\right), \nonumber
\end{eqnarray}
for some $\xi_1 >0$, since $H_{\mathbf{x},0}$ defined in (\ref{Hx0}) is positive definite.
When $\sigma_2 = 0$, by a slight modification for the proof of Lemma \ref{Lem-36}, we
have the following lemma.
\begin{lemma} \label{Lem-36-a}
Suppose $\tau>-1$. The sequence $\{\m{w}^k\}$ generated by the algorithm (\ref{GS-ADMM-a}) satisfies
\begin{eqnarray}
&& \left(\mathcal{A}\m{x}^{k+1}+By^{k+1}-c\right)\tr B\left(y^k-y^{k+1}\right)  \nonumber\\
& \ge & \frac{1-s}{1+\tau}\left(\mathcal{A}\m{x}^{k}+By^{k}-c\right)\tr B\left(y^k-y^{k+1}\right)
 -\frac{\tau}{1+\tau}\left\| B\left(y^k-y^{k+1}\right)\right\|^2. \nonumber
\end{eqnarray}
\end{lemma}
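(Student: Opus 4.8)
The plan is to follow the proof of Lemma \ref{Lem-36}, which simplifies substantially in the present setting: with $q=1$ there is a single $y$-block, so the cross terms $\sum_{j\ne l}B_j(\cdot)$ drop out of the optimality conditions, and with $\sigma_2=0$ the proximal contribution $\frac{\sigma_2\beta}{2}\|B(\cdot)\|^2$ vanishes, so that the matrix $H_\mathbf{y}$ of (\ref{def-Hy}) collapses to the zero matrix and neither the Cauchy-Schwartz step (\ref{HyIeq}) nor the $\|\cdot\|_{H_\mathbf{y}}^2$ terms appearing in Lemma \ref{Lem-36} are needed.

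Concretely, I would first apply Lemma \ref{opt-1} to the $y^{k+1}$-subproblem of (\ref{GS-ADMM-a}) to get $y^{k+1}\in\mathcal{Y}$ and
\[
g(y)-g(y^{k+1})+\left\langle B(y-y^{k+1}),\ -\lambda^{k+\frac{1}{2}}+\beta\left(\mathcal{A}\m{x}^{k+1}+By^{k+1}-c\right)\right\rangle\ge 0,\quad\forall\, y\in\mathcal{Y},
\]
and set $y=y^k$. Likewise, the $y^{k}$-subproblem (which minimized $\mathcal{L}_\beta(\m{x}^k,y,\lambda^{k-\frac{1}{2}})$ over $\mathcal{Y}$) yields
\[
g(y)-g(y^{k})+\left\langle B(y-y^{k}),\ -\lambda^{k-\frac{1}{2}}+\beta\left(\mathcal{A}\m{x}^{k}+By^{k}-c\right)\right\rangle\ge 0,\quad\forall\, y\in\mathcal{Y},
\]
and I would set $y=y^{k+1}$. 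Adding the two inequalities, the $g$-terms cancel, leaving
\[
\left\langle B(y^k-y^{k+1}),\ \lambda^{k-\frac{1}{2}}-\lambda^{k+\frac{1}{2}}+\beta\left(\mathcal{A}\m{x}^{k+1}+By^{k+1}-c\right)-\beta\left(\mathcal{A}\m{x}^{k}+By^{k}-c\right)\right\rangle\ge 0.
\]

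To finish, I would substitute, exactly as in the proof of Lemma \ref{Lem-36}, the identity $\lambda^{k-\frac{1}{2}}-\lambda^{k+\frac{1}{2}}=\tau\beta(\mathcal{A}\m{x}^{k+1}+By^{k+1}-c)+s\beta(\mathcal{A}\m{x}^{k}+By^{k}-c)+\tau\beta B(y^k-y^{k+1})$, which follows from the two updates $\lambda^{k+\frac{1}{2}}=\lambda^k-\tau\beta(\mathcal{A}\m{x}^{k+1}+By^{k}-c)$ and $\lambda^{k}=\lambda^{k-\frac{1}{2}}-s\beta(\mathcal{A}\m{x}^{k}+By^{k}-c)$ together with $\mathcal{A}\m{x}^{k+1}+By^k-c=(\mathcal{A}\m{x}^{k+1}+By^{k+1}-c)+B(y^k-y^{k+1})$. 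The bracket then becomes $(1+\tau)\beta(\mathcal{A}\m{x}^{k+1}+By^{k+1}-c)+(s-1)\beta(\mathcal{A}\m{x}^{k}+By^{k}-c)+\tau\beta B(y^k-y^{k+1})$, and dividing by $(1+\tau)\beta>0$ (legitimate since $\tau>-1$) and rearranging gives precisely the asserted inequality. I do not expect any real obstacle: the statement is a direct specialization of Lemma \ref{Lem-36} to $q=1$, $\sigma_2=0$, and the only thing to be careful about is keeping track of which feasibility residual and which half-step multiplier $\lambda^{k\pm\frac12}$ is attached to which subproblem.
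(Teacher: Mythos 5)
Your proposal is correct and follows essentially the same route as the paper, which proves this lemma exactly as ``a slight modification of the proof of Lemma \ref{Lem-36}'': optimality conditions of the $y^{k+1}$- and $y^{k}$-subproblems evaluated at $y^{k}$ and $y^{k+1}$, addition, and substitution of the identity for $\lambda^{k-\frac{1}{2}}-\lambda^{k+\frac{1}{2}}$ followed by division by $(1+\tau)\beta>0$. Your observation that with $q=1$ and $\sigma_2=0$ the matrix $H_{\mathbf{y}}$ vanishes, so the Cauchy--Schwartz step (\ref{HyIeq}) and the $\|\cdot\|_{H_\mathbf{y}}^2$ terms of Lemma \ref{Lem-36} disappear, is precisely the modification the paper intends.
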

Then, the following two theorems are similar to Theorem \ref{Thoe-38} and Theorem \ref{Conver-P}.
\begin{theorem} \label{Thoe-38-a}
 Let the sequences  $\{\m{w}^k\}$ and $\{ \widetilde{\m{w}}^k \}$  be generated by the algorithm (\ref{GS-ADMM-a}).
For any
\[
\sigma_1\in (p-1,+\infty) \quad \mbox{and} \quad (\tau,s) \in \C{K},
 \]
where $\C{K}$ is given in (\ref{setK}),
there exist constants $\xi_i(i=1,2)>0$ and $\xi_3\ge 0$, such that
\begin{eqnarray}\label{lbd-456-a}
 \left\|\m{w}^k-\widetilde{\m{w}}^k\right\|_G^2
& \ge &\xi_1 \sum\limits_{i=1}^{p}\|A_i\left(x_i^k-x_i^{k+1}\right)\|^2
 + \xi_2\left\|\mathcal{A}\m{x}^{k+1}+ By^{k+1}-c\right\|^2  \\
&& + \xi_3\left(\left\|\mathcal{A}\m{x}^{k+1}+ By^{k+1}-c\right\|^2-\left\|\mathcal{A}\m{x}^{k}+ By^{k}-c\right\|^2\right) \nonumber.
\end{eqnarray}
\end{theorem}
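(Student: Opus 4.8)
The plan is to imitate the proof of Theorem \ref{Thoe-38}, exploiting the simplifications brought by $q=1$ and $\sigma_2=0$: these remove all $H_\mathbf{y}$-weighted quantities, so that Lemma \ref{Lem-36-a} replaces Lemma \ref{Lem-36} with no residual $\|\m{y}^{k+1}-\m{y}^k\|_{H_\mathbf{y}}^2$ terms, and consequently there is no constant $\xi_4$. I would begin from the expansion (\ref{wGnorm-a}) of $\frac{1}{\beta}\|\m{w}^k-\widetilde{\m{w}}^k\|_G^2$, whose right-hand side already isolates the nonnegative block $\xi_1\sum_{i=1}^p\|A_i(x_i^k-x_i^{k+1})\|^2$ (using that $\sigma_1\in(p-1,+\infty)$ makes $H_{\mathbf{x},0}$ positive definite), together with $(2-\tau-s)\|\mathcal{A}\m{x}^{k+1}+By^{k+1}-c\|^2$, the term $(1-\tau)\|B(y^k-y^{k+1})\|^2$, and the cross term $2(1-\tau)(\mathcal{A}\m{x}^{k+1}+By^{k+1}-c)\tr B(y^k-y^{k+1})$.

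Next I would record that $(\tau,s)\in\C{K}$ forces $\tau>-1$, via the identity $-\tau^2-s^2-\tau s+\tau+s+1=-\tau^2+(1-s)(\tau+s)+1$ already used in the proof of Theorem \ref{Thoe-38}. Hence $1-\tau\ge 0$, so I may multiply the inequality of Lemma \ref{Lem-36-a} by $2(1-\tau)$ and substitute it for the cross term. After collecting the $\|B(y^k-y^{k+1})\|^2$ contributions, whose coefficient becomes $(1-\tau)-\frac{2\tau(1-\tau)}{1+\tau}=\frac{(1-\tau)^2}{1+\tau}$, this yields the exact analogue of (\ref{lbd-123}) with the $H_\mathbf{y}$ terms absent, i.e.\ a lower bound carrying $\xi_1\sum_{i=1}^p\|A_i(x_i^k-x_i^{k+1})\|^2$, $(2-\tau-s)\|\mathcal{A}\m{x}^{k+1}+By^{k+1}-c\|^2$, $\frac{(1-\tau)^2}{1+\tau}\|B(y^k-y^{k+1})\|^2$, and the new cross term $\frac{2(1-\tau)(1-s)}{1+\tau}(\mathcal{A}\m{x}^k+By^k-c)\tr B(y^k-y^{k+1})$.

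Then I would dominate that remaining cross term by Cauchy--Schwarz in the form $\frac{2}{1+\tau}[(1-s)u]\tr[(1-\tau)v]\ge-\frac{(1-s)^2}{1+\tau}\|u\|^2-\frac{(1-\tau)^2}{1+\tau}\|v\|^2$ with $u=\mathcal{A}\m{x}^k+By^k-c$ and $v=B(y^k-y^{k+1})$, which is legitimate because $1+\tau>0$. The pleasant point is that $-\frac{(1-\tau)^2}{1+\tau}\|B(y^k-y^{k+1})\|^2$ exactly cancels the positive $\frac{(1-\tau)^2}{1+\tau}\|B(y^k-y^{k+1})\|^2$, so the $B(y^k-y^{k+1})$ terms vanish and I am left with
\[
\frac{1}{\beta}\|\m{w}^k-\widetilde{\m{w}}^k\|_G^2\ \ge\ \xi_1\sum_{i=1}^p\|A_i(x_i^k-x_i^{k+1})\|^2+(2-\tau-s)\|\mathcal{A}\m{x}^{k+1}+By^{k+1}-c\|^2-\frac{(1-s)^2}{1+\tau}\|\mathcal{A}\m{x}^k+By^k-c\|^2.
\]
Splitting the last two terms as $\left(2-\tau-s-\frac{(1-s)^2}{1+\tau}\right)\|\mathcal{A}\m{x}^{k+1}+By^{k+1}-c\|^2+\frac{(1-s)^2}{1+\tau}\left(\|\mathcal{A}\m{x}^{k+1}+By^{k+1}-c\|^2-\|\mathcal{A}\m{x}^k+By^k-c\|^2\right)$ and multiplying through by $\beta$ then produces (\ref{lbd-456-a}), with $\xi_2=\beta\left(2-\tau-s-\frac{(1-s)^2}{1+\tau}\right)$, $\xi_3=\beta\frac{(1-s)^2}{1+\tau}\ge 0$, and $\xi_1$ rescaled by $\beta$.

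The one step deserving care, which I regard as the crux, is verifying $\xi_2>0$. Clearing the denominator, which is permissible since $1+\tau>0$, the quantity $(2-\tau-s)(1+\tau)-(1-s)^2$ collapses to $-\tau^2-s^2-\tau s+\tau+s+1$, which is strictly positive precisely by the definition of $\C{K}$ in (\ref{setK}). This is the identical mechanism behind $\xi_2>0$ in Theorem \ref{Thoe-38}, so no genuinely new obstacle appears; the bookkeeping is merely lighter here because $\sigma_2=0$ suppresses the $H_\mathbf{y}$ contributions of Lemma \ref{Lem-36}.
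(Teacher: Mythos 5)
Your proposal is correct and follows essentially the same route as the paper's own proof: combining the expansion (\ref{wGnorm-a}), Lemma \ref{Lem-36-a} (scaled by $2(1-\tau)\ge 0$), and the Cauchy--Schwarz bound (\ref{Cau-Schw}), then splitting the feasibility terms to obtain $\xi_2=\beta\bigl(2-\tau-s-\tfrac{(1-s)^2}{1+\tau}\bigr)>0$ and $\xi_3=\beta\tfrac{(1-s)^2}{1+\tau}\ge 0$ exactly as in (\ref{xi2}) and (\ref{xi3-xi4}). Your explicit verification that $(2-\tau-s)(1+\tau)-(1-s)^2=-\tau^2-s^2-\tau s+\tau+s+1>0$ on $\C{K}$, and your remark that $\xi_1$ must be rescaled by $\beta$, merely spell out details the paper leaves implicit.
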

\begin{proof}
For any $(\tau, s) \in \C{K}$, we have $\tau > -1$. Then, by
Lemma \ref{Lem-36-a}, the inequality (\ref{wGnorm-a}) and the Cauchy-Schwartz inequality
(\ref{Cau-Schw}), we can deduce that (\ref{lbd-456-a}) holds
with $\xi_1$ given in (\ref{wGnorm-a}),
$\xi_2$ and $\xi_3$ given in (\ref{xi2}) and (\ref{xi3-xi4}), respectively. $\ \ \ \diamondsuit$
\end{proof}

\begin{theorem} \label{Conver-P-a}
Let the sequences $\{\m{w}^k\}$ and $\{ \widetilde{\m{w}}^k \}$ be generated by the algorithm (\ref{GS-ADMM-a}). For any
\begin{equation}\label{conv-region-1-a}
\sigma_1\in (p-1,+\infty) \quad \mbox{and} \quad (\tau,s) \in \C{K}_1,
 \end{equation}
where
\[
\C{K}_1 = \left\{(\tau, s) \ | \
\tau + s >0, \ \tau <1, \ -\tau^2 - s^2 -\tau s + \tau + s + 1 >0 \right\},
\]
we have
\begin{equation} \label{x-fea-a}
\lim_{k\rightarrow \infty} \sum\limits_{i=1}^{p}\left\|A_i\left(x_i^k-x_i^{k+1}\right)\right\|^2=0 \quad \mbox{and}
\quad \lim_{k\rightarrow \infty} \left\|\mathcal{A}\m{x}^k+By^k-c\right\| = 0,
\end{equation}
and there exists a $\m{w}^\infty\in \mathcal{M}^*$ such that
$
\lim\limits_{k\rightarrow \infty} \widetilde{\m{w}}^{k} = \m{w}^\infty.
$
\end{theorem}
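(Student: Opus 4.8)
The plan is to mirror the proof of Theorem~\ref{Conver-P}, with the matrices $H$, $M$, $Q$, $G$ replaced by their case-(a) counterparts in (\ref{H-a})--(\ref{G-a}) and with $q=1$, $\sigma_2=0$. First I would combine the case-(a) versions of Theorems~\ref{ther-32} and \ref{Ieq-33} with the lower bound (\ref{lbd-456-a}) from Theorem~\ref{Thoe-38-a}. Writing $r^k := \mathcal{A}\m{x}^k + By^k - c$, this yields, for every $\m{w}^* \in \mathcal{M}^*$, a Fej\'er-type inequality
\[
\left\|\m{w}^{k+1}-\m{w}^*\right\|_H^2 + \xi_3 \|r^{k+1}\|^2 \le \left\|\m{w}^{k}-\m{w}^*\right\|_H^2 + \xi_3 \|r^k\|^2 - \xi_1 \sum_{i=1}^p \left\|A_i(x_i^k - x_i^{k+1})\right\|^2 - \xi_2 \|r^{k+1}\|^2,
\]
where $\xi_1,\xi_2>0$ and $\xi_3\ge0$: indeed $\xi_1>0$ because $\sigma_1>p-1$ makes $H_{\mathbf{x},0}$ positive definite, and $\xi_2>0$, $\tau>-1$ because $(\tau,s)\in\C{K}_1\subset\C{K}$ together with (\ref{xi2}). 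The $\xi_4$-term present in Theorem~\ref{Conver-P} disappears here since $q=1$ and $\sigma_2=0$ force $H_\mathbf{y}=\m{0}$.

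Next, since $(\tau,s)\in\C{K}_1$ gives $\tau<1$ and $\tau+s>0$, the matrix $H$ in (\ref{H-a}) is symmetric positive definite; this is precisely where the strict inequality $\tau<1$ defining $\C{K}_1$ (rather than $\tau\le1$ as in $\C{K}$) is used. Hence the displayed inequality shows the nonnegative quantity $\|\m{w}^k-\m{w}^*\|_H^2+\xi_3\|r^k\|^2$ is nonincreasing, so $\{\m{w}^k\}$ is bounded, and summing over $k=1,2,\dots$ and using $\xi_1,\xi_2>0$ gives $\sum_{k\ge1}\big(\sum_{i=1}^p\|A_i(x_i^k-x_i^{k+1})\|^2+\|r^{k+1}\|^2\big)<\infty$. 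This establishes both limits in (\ref{x-fea-a}); in particular $\mathcal{A}(\m{x}^k-\m{x}^{k+1})\to\m{0}$ and $r^k\to\m{0}$. Boundedness then provides a subsequence $\{\m{w}^{k_j}\}$ converging to some $\m{w}^\infty=(\m{x}^\infty,y^\infty,\lambda^\infty)\in\mathcal{M}$.

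The step I expect to be the main obstacle is showing $\m{w}^k-\widetilde{\m{w}}^k\to\m{0}$, because, unlike in Theorem~\ref{Conver-P}, the lower bound (\ref{lbd-456-a}) contains no term controlling $\|B(y^k-y^{k+1})\|$, so $y^k-\widetilde{y}^k\to\m{0}$ is not immediate. I would recover it indirectly: from $\mathcal{A}(\m{x}^k-\m{x}^{k+1})\to\m{0}$ and $r^k\to\m{0}$ one gets $\mathcal{A}\m{x}^{k+1}+By^k-c = r^k-\mathcal{A}(\m{x}^k-\m{x}^{k+1})\to\m{0}$, and subtracting this from $r^{k+1}\to\m{0}$ yields $B(y^{k+1}-y^k)\to\m{0}$, whence $y^k-\widetilde{y}^k=y^k-y^{k+1}\to\m{0}$ by the full column rank of $B$. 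Combined with $A_i(x_i^k-x_i^{k+1})\to\m{0}$ (and full column rank of the $A_i$) and $\widetilde{\lambda}^k-\lambda^k=-\beta(\mathcal{A}\m{x}^{k+1}+By^k-c)\to\m{0}$, this gives $\m{w}^k-\widetilde{\m{w}}^k\to\m{0}$. Letting $j\to\infty$ along $\{k_j\}$ in the case-(a) analog of (\ref{app-vi}), and using the skew-symmetry (\ref{tilde-Jw}) of $\mathcal{J}$, then yields $h(\m{u})-h(\m{u}^\infty)+\langle\m{w}-\m{w}^\infty,\mathcal{J}(\m{w}^\infty)\rangle\ge0$ for all $\m{w}\in\mathcal{M}$, i.e.\ $\m{w}^\infty\in\mathcal{M}^*$.

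Finally, to upgrade subsequential to full-sequence convergence I would reapply the Fej\'er inequality with $\m{w}^*=\m{w}^\infty\in\mathcal{M}^*$: for every $l\ge k_j$,
\[
\left\|\m{w}^{l}-\m{w}^\infty\right\|_H^2 + \xi_3\|r^l\|^2 \le \left\|\m{w}^{k_j}-\m{w}^\infty\right\|_H^2 + \xi_3\|r^{k_j}\|^2,
\]
and the right-hand side tends to $0$ as $j\to\infty$ since $\m{w}^{k_j}\to\m{w}^\infty$ and $r^{k_j}\to\m{0}$; positive definiteness of $H$ then forces $\m{w}^l\to\m{w}^\infty$, and $\m{w}^k-\widetilde{\m{w}}^k\to\m{0}$ gives $\widetilde{\m{w}}^k\to\m{w}^\infty$, completing the argument. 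The only genuinely non-routine points are the indirect derivation of $y^k-y^{k+1}\to\m{0}$ just described and the verification that $H$ in (\ref{H-a}) is positive definite exactly on the region where $\tau<1$ is strict.
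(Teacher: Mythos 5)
Your proposal is correct and follows essentially the same route as the paper's proof: the same Fej\'er-type inequality (\ref{Final-wH-a}) obtained by combining Theorem \ref{Ieq-33} with Theorem \ref{Thoe-38-a}, positive definiteness of $H$ in (\ref{H-a}) under the strict condition $\tau<1$, summation to obtain (\ref{x-fea-a}), extraction of a convergent subsequence, passage to the limit in (\ref{app-vi}) to get $\m{w}^\infty\in\mathcal{M}^*$, and a final Fej\'er argument for full-sequence convergence. The only minor difference is that you derive $B(y^{k+1}-y^{k})\to\m{0}$ directly along the whole sequence by subtracting residuals, whereas the paper obtains $\widetilde{y}^{k_j}\to y^\infty$ only along the subsequence, via the limit point satisfying the linear constraint and the full column rank of $B$; both arguments are equally valid.
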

\begin{proof}
First, it is clear that Theorem \ref{Ieq-33} still holds, which, combining with Theorem \ref{Thoe-38-a},
gives
\begin{eqnarray} \label{Final-wH-a}
 && \|\m{w}^{k+1}-\m{w}^*\|_H^2+ \xi_3\left\|\mathcal{A}\m{x}^{k+1}+By^{k+1}-c\right\|^2\\
& \le &  \|\m{w}^{k}-\m{w}^*\|_H^2+\xi_3\left\|\mathcal{A}\m{x}^{k}+By^{k}-c\right\|^2 \nonumber \\
& & - \xi_1 \sum\limits_{i=1}^{p}\|A_i\left(x_i^k-x_i^{k+1}\right)\|^2
 - \xi_2\left\|\mathcal{A}\m{x}^{k+1}+ By^{k+1}-c\right\|^2, \quad \forall \m{w}^*\in \mathcal{M}^*. \nonumber
\end{eqnarray}
Hence, (\ref{x-fea-a}) holds. For $(\sigma_1, \tau, s)$ satisfying (\ref{conv-region-1-a}), $H$ in (\ref{H-a})
is positive definite. So, by (\ref{Final-wH-a}), $\{\m{w}^k\}$ is uniformly bounded and therefore,
there exits a subsequence $\{\m{w}^{k_j}\}$ converging to a point $\m{w}^\infty = (\m{x}^\infty, y^\infty, \lambda^\infty) \in \C{M}$.
So, it follows from (\ref{x-fea-a}) and the full column rank assumption of all the matrices $A_i$ that
\begin{equation}\label{xlambda-diff-a}
\lim_{k\rightarrow \infty} x_i^k-\widetilde{x}_i^k = \lim_{k\rightarrow \infty} x_i^k-x_i^{k+1} = \m{0} \quad \mbox{and} \quad
\lim_{k\rightarrow \infty} \lambda^k-\widetilde{\lambda}^k = \m{0},
\end{equation}
for all $i=1, \cdots, p$. Hence, by $\lim\limits_{j \to \infty} \m{w}^{k_j} = \m{w}^\infty$ and (\ref{x-fea-a}), we have
\[
\lim_{j \to \infty} \m{x}^{k_j+1} = \m{x}^\infty \quad
\mbox{and} \quad \mathcal{A}\m{x}^\infty+By^\infty-c = \m{0},
\]
and therefore, by the full column rank assumption of $B$ and (\ref{x-fea-a}),
\[
\lim_{j \to \infty} y^{k_j+1} = \lim\limits_{j \to \infty} \widetilde{y}^{k_j} = y^\infty.
\]
Hence, by (\ref{xlambda-diff-a}), we have $ \lim\limits_{j \to \infty} \m{w}^{k_j} - \widetilde{\m{w}}^{k_j} = \m{0}$.
Thus, by taking $\widetilde{\m{w}}^{k_j}$ in (\ref{app-vi}) and letting $j$ go to $\infty$, the inequality  (\ref{Final-AB}) still holds.
Then, the rest proof of this theorem follows from the same proof of Theorem \ref{Conver-P}. $\ \ \ \diamondsuit$
\end{proof}

Based on Theorem \ref{Thoe-38-a} and by a similar proof to those of Theorem \ref{Conv-rate}, we can also show that
the algorithm (\ref{GS-ADMM-a}) has the worst-case  $\C{O}(1/t)$ convergence rate, which is omitted here for conciseness.

\subsection{Convergence for the case (b)}
The corresponding GS-ADMM for the case (b) can be simplified as follows
\begin{equation}\label{GS-ADMM-b}
\  \left \{\begin{array}{lll}
 x^{k+1}=\arg\min\limits_{x\in\mathcal{X}} \mathcal{L}_\beta (x,\m{y}^k,\lambda^k) \\
 \lambda^{k+\frac{1}{2}}=\lambda^k-\tau\beta(Ax^{k+1}+\mathcal{B}\m{y}^{k}-c),\\
 \textrm{For}\ j=1,2,\cdots,q,\\
 \quad y_{j}^{k+1}=\arg\min\limits_{y_{j}\in\mathcal{Y}_{j}} \mathcal{L}_\beta(x^{k+1},y_1^k,\cdots, y_j,\cdots,y_q^k,\lambda^{k+\frac{1}{2}}) + Q_j^k(y_j), \\
 \quad \textrm{where } Q_j^k(y_j) =
\frac{\sigma_2\beta}{2}\left\|B_{j}(y_{j}-y_{j}^k)\right\|^2,\\
\lambda^{k+1}=\lambda^{k+\frac{1}{2}}-s\beta(Ax^{k+1}+\mathcal{B}\m{y}^{k+1}-c).
\end{array}\right.
\end{equation}
In this case, the corresponding matrices $Q,M, H$ and $G$ become $\widetilde{Q},\widetilde{M}, \widetilde{H}$ and $\widetilde{G}$,
which are defined in (\ref{tilde-Q}), the lower-right block of $M$ in (\ref{M}), (\ref{tilde-H}) and (\ref{tilde-G}), respectively.

In what follows, let us define
\[
\m{v}^k=\left(\begin{array}{c}
\m{y}^k\\  \lambda^k \\
\end{array}\right) \quad \mbox{and} \quad
\widetilde{\m{v}}^k=\left(\begin{array}{c}
\widetilde{\m{y}}^k\\ \widetilde{\lambda}^k\\
\end{array}\right).
\]
Then, by the proof of Theorem \ref{Thoe-38}, we can deduce the following theorem.
\begin{theorem} \label{Thoe-38-b}
 Let the sequences  $\{\mathbf{v}^k\}$ and $\{ \widetilde{\mathbf{v}}^k\}$  be generated by the algorithm (\ref{GS-ADMM-b}).
For any
\[
\sigma_2 \in (q-1,+\infty) \quad \mbox{and} \quad (\tau,s) \in \C{K},
\]
where $\C{K}$ is defined in (\ref{setK}),
there exist constants $\xi_i(i=1,2)>0$ and $\xi_j(j=3,4)\ge0$ such that
\begin{eqnarray}\label{lbd-456-b}
 \left\|\m{v}^k-\widetilde{\m{v}}^k\right\|_{\widetilde{G}}^2
& \ge &\xi_1 \sum\limits_{j=1}^{q}\left\|B_j\left(y_j^k-y_j^{k+1}\right)\right\|^2  + \xi_2\left\|Ax^{k+1}+\mathcal{B}\m{y}^{k+1}-c\right\|^2 \nonumber\\
&& + \xi_3\left(\left\|Ax^{k+1}+\mathcal{B}\m{y}^{k+1}-c\right\|^2-\left\|Ax^{k}+\mathcal{B}\m{y}^{k}-c\right\|^2\right) \nonumber\\
&& + \xi_4\left(\left\|\m{y}^{k+1}-\m{y}^k\right\|^2_{H_\mathbf{y}}-\left\|\m{y}^{k}-\m{y}^{k-1}\right\|^2_{H_\mathbf{y}}\right). \nonumber
\end{eqnarray}
\end{theorem}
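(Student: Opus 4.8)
The plan is to replay the proof of Theorem \ref{Thoe-38} almost verbatim, specialized to $p=1$ and $\sigma_1=0$. The key observation is structural: when $p=1$ the block $H_{\mathbf{x}}$ in (\ref{Hx}) reduces to $\sigma_1\beta A\tr A$, which vanishes for $\sigma_1=0$; consequently, as already noted before (\ref{GS-ADMM-b}), the matrices $Q$, $M$, $H$, $G$ of the iteration (\ref{GS-ADMM-b}) reduce to $\widetilde{Q}$, $\widetilde{M}$, $\widetilde{H}$, $\widetilde{G}$ acting only on $\m{v}=(\m{y},\lambda)$, so that $\|\m{w}^k-\widetilde{\m{w}}^k\|_G^2=\|\m{v}^k-\widetilde{\m{v}}^k\|_{\widetilde{G}}^2$ and the entire lower-bound analysis leading to Theorem \ref{Thoe-38} collapses onto the $\m{v}$-iterates. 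Thus it suffices to reproduce, for $\widetilde{G}$, the three ingredients behind (\ref{lbd-456}): the explicit expansion (\ref{eq-wGnorm}), Lemma \ref{Lem-36}, and the Cauchy-Schwartz estimate (\ref{Cau-Schw}).

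First I would re-derive Lemma \ref{tilde-VI} in this setting: with $p=1$ and $\sigma_1=0$, the $x$-subproblem of (\ref{GS-ADMM-b}) carries neither a proximal term nor cross terms, so its optimality condition is simply $f_1(x)-f_1(\widetilde{x}^k)+\langle A(x-\widetilde{x}^k),-\widetilde{\lambda}^k\rangle\ge 0$, that is, the $x$-row of (\ref{app-vi}) with a vanishing $H_{\mathbf{x}}$-contribution, while the $y$- and $\lambda$-rows are unchanged; hence the variational inequality (\ref{app-vi}) and the prediction-correction identity (\ref{pre-core}) hold with $\widetilde{Q}$, $\widetilde{M}$, $\widetilde{H}$ in place of $Q$, $M$, $H$. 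Next I would reproduce the expansion (\ref{eq-wGnorm}) for $\widetilde{G}$, using the factorization $\widetilde{G}=\widetilde{D}\tr\widetilde{G}_0\widetilde{D}$ with $\widetilde{D}$ from (\ref{titl-Dw}) and $\widetilde{G}_0$ from (\ref{tilde-G0}), together with the same congruence that diagonalized $\widetilde{H}_0$ in the proof of Lemma \ref{spd-H}. Since $p=1$ there is no $x$-term left, and one obtains that $\frac{1}{\beta}\|\m{v}^k-\widetilde{\m{v}}^k\|_{\widetilde{G}}^2$ equals the $H_{\mathbf{y},0}$-weighted norm of $(B_1(y_1^k-y_1^{k+1}),\ldots,B_q(y_q^k-y_q^{k+1}))$, plus $(1-\tau)\|\mathcal{B}(\m{y}^k-\m{y}^{k+1})\|^2$, plus $(2-\tau-s)\|Ax^{k+1}+\mathcal{B}\m{y}^{k+1}-c\|^2$, plus the cross term $2(1-\tau)(Ax^{k+1}+\mathcal{B}\m{y}^{k+1}-c)\tr\mathcal{B}(\m{y}^k-\m{y}^{k+1})$. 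Because $\sigma_2\in(q-1,+\infty)$ makes $H_{\mathbf{y},0}$ positive definite, the first summand is at least $\xi_1\sum_{j=1}^q\|B_j(y_j^k-y_j^{k+1})\|^2$ for some $\xi_1>0$.

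It then remains to absorb the cross term, and here Lemma \ref{Lem-36} applies unchanged: its proof uses only the $y_l$-subproblem optimality conditions, the $\lambda^{k+1/2}$ and $\lambda^{k-1/2}$ updates, and $\sigma_2>q-1$, none of which involves $\sigma_1$ or $p$. So, exactly as in Theorem \ref{Thoe-wGnorm}, the cross term turns into a telescoping $H_{\mathbf{y}}$-quantity together with $2(1-\tau)(1-s)(Ax^{k}+\mathcal{B}\m{y}^{k}-c)\tr\mathcal{B}(\m{y}^k-\m{y}^{k+1})$, modulo a $\frac{(1-\tau)^2}{1+\tau}\beta\|\mathcal{B}(\m{y}^k-\m{y}^{k+1})\|^2$ contribution. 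Applying (\ref{Cau-Schw}) to this last cross term, and using the defining inequalities of $\C{K}$ in (\ref{setK}) --- which force $\tau>-1$ through the identity $-\tau^2-s^2-\tau s+\tau+s+1=-\tau^2+(1-s)(\tau+s)+1$ --- one reads off $\xi_2=\big(2-\tau-s-(1-s)^2/(1+\tau)\big)\beta>0$, $\xi_3=(1-s)^2\beta/(1+\tau)\ge0$, and $\xi_4=(1-\tau)/(1+\tau)\ge0$, which yields (\ref{lbd-456-b}). I do not anticipate a genuine obstacle; the only point needing care is the bookkeeping that confirms the $x$-rows and columns really do decouple when $\sigma_1=0$ and $p=1$, so that $\widetilde{G}$ is the operative matrix and the exact, full-column-rank $x$-update causes no complication, together with checking that the congruence applied to $\widetilde{G}_0$ is precisely the one already used on $\widetilde{H}_0$ in the proof of Lemma \ref{spd-H}.
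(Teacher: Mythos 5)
Your proposal is correct and follows essentially the same route as the paper, which proves Theorem \ref{Thoe-38-b} precisely by specializing the argument of Theorem \ref{Thoe-38} (the expansion (\ref{eq-wGnorm}) without the $H_{\mathbf{x},0}$-term, Lemma \ref{Lem-36}, and the Cauchy--Schwartz bound (\ref{Cau-Schw})) to $p=1$, $\sigma_1=0$, with the same constants $\xi_2,\xi_3,\xi_4$ as in (\ref{xi2}) and (\ref{xi3-xi4}). The only cosmetic slip is that the congruence turning $\widetilde{G}_0$ into $\overline{G}_0$ uses $I$ (not $\tau I$) in the last block column, i.e.\ it is not literally the one applied to $\widetilde{H}_0$ in Lemma \ref{spd-H}, but the expansion you then write down is the correct one.
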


By slight modifications of  the proof of Theorem \ref{Conver-P} and Theorem \ref{Conver-P-a},
we have the following global convergence theorem.
\begin{theorem} \label{Conver-P-b}
Let the sequences $\{\m{w}^k\}$ and $\{ \widetilde{\m{w}}^k \}$ be generated by the algorithm (\ref{GS-ADMM-a}). Then, for any
\[
\sigma_2 \in (q-1,+\infty) \quad \mbox{and} \quad (\tau,s) \in \C{K},
 \]
where $\C{K}$ is defined in (\ref{setK}),
we have
\[
\lim_{k\rightarrow \infty} \sum\limits_{j=1}^{q}\left\|B_j\left(y_j^k-y_j^{k+1}\right)\right\|^2=0 \quad \mbox{and}
\quad \lim_{k\rightarrow \infty} \left\|Ax^k+\C{B}\m{y}^k-c\right\| = 0,
\]
and there exists a $\m{w}^\infty\in \mathcal{M}^*$ such that
$
\lim_{k\rightarrow \infty} \widetilde{\m{w}}^{k} = \m{w}^\infty.
$
\end{theorem}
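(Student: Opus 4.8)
The plan is to transcribe the convergence proofs of Theorem~\ref{Conver-P} and Theorem~\ref{Conver-P-a}, working now with the reduced iterate $\m{v}^k=(\m{y}^k;\lambda^k)$ and the reduced matrices $\widetilde{Q},\widetilde{M},\widetilde{H},\widetilde{G}$ in place of $Q,M,H,G$. This reduction is legitimate because, with $p=1$ and $\sigma_1=0$, the $x$-block of $Q$ vanishes and the $x$-subproblem carries no proximal term, so all the prediction--correction identities of Section~3 hold verbatim for $\m{v}^k$. First I would record the $\m{v}$-analogues of Theorem~\ref{ther-32} and Theorem~\ref{Ieq-33}: the same polarization-identity manipulation (with $a=\m{w}$, $b=\widetilde{\m{w}}^k$, $c=\m{w}^k$, $d=\m{w}^{k+1}$) together with (\ref{opt-vi-1}) gives, for every $\m{w}^*=(\m{x}^*;\m{y}^*;\lambda^*)\in\C{M}^*$,
\[
\|\m{v}^{k+1}-\m{v}^*\|_{\widetilde{H}}^2\le\|\m{v}^{k}-\m{v}^*\|_{\widetilde{H}}^2-\|\m{v}^{k}-\widetilde{\m{v}}^k\|_{\widetilde{G}}^2,\qquad \m{v}^*:=(\m{y}^*;\lambda^*),
\]
where the right-hand side depends only on $\m{v}^*$. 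Combining this with the lower bound of Theorem~\ref{Thoe-38-b} yields, exactly as in the corollary preceding Theorem~\ref{Conver-P}, the quasi-Fej\'er estimate $\Phi_{k+1}\le\Phi_k-\xi_1\sum_{j=1}^q\|B_j(y_j^k-y_j^{k+1})\|^2-\xi_2\|Ax^{k+1}+\C{B}\m{y}^{k+1}-c\|^2$ with $\xi_1,\xi_2>0$ and $\xi_3,\xi_4\ge0$, where $\Phi_k:=\|\m{v}^k-\m{v}^*\|_{\widetilde{H}}^2+\xi_3\|Ax^k+\C{B}\m{y}^k-c\|^2+\xi_4\|\m{y}^k-\m{y}^{k-1}\|_{H_\mathbf{y}}^2$.

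Since $(\tau,s)\in\C{K}$ entails $\tau\le 1$ and $\tau+s>0$, and $\sigma_2\in(q-1,\infty)$, the matrix $\widetilde{H}$ is positive definite by the computation in Lemma~\ref{spd-H}, and likewise $H_\mathbf{y}$ is positive definite, so $\Phi_k\ge0$. Telescoping the quasi-Fej\'er estimate over $k$ then forces $\sum_{j=1}^q\|B_j(y_j^k-y_j^{k+1})\|^2\to0$ and $\|Ax^k+\C{B}\m{y}^k-c\|\to0$, which are the first two assertions. Because each $B_j$ has full column rank and $\|\C{B}(\m{y}^k-\m{y}^{k+1})\|^2\le q\sum_{j=1}^q\|B_j(y_j^k-y_j^{k+1})\|^2$, we also get $\m{y}^k-\m{y}^{k+1}\to0$, hence $\widetilde{\lambda}^k-\lambda^k=-\beta(Ax^{k+1}+\C{B}\m{y}^k-c)\to0$ from (\ref{tilde-lambda}), and therefore $\m{v}^k-\widetilde{\m{v}}^k\to0$. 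Positive definiteness of $\widetilde{H}$ together with monotonicity of $\{\Phi_k\}$ shows $\{\m{v}^k\}$ is bounded, so some subsequence satisfies $\m{v}^{k_j}\to\m{v}^\infty=(\m{y}^\infty;\lambda^\infty)$.

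The one genuinely new step — and the part I expect to be the main obstacle — is recovering the $x$-iterate, since with $\sigma_1=0$ there is no term controlling $x^k-x^{k+1}$ in $\Phi_k-\Phi_{k+1}$. I would handle this exactly as in Theorem~\ref{Conver-P-a}: from $\|Ax^{k_j}+\C{B}\m{y}^{k_j}-c\|\to0$ and $\m{y}^{k_j}\to\m{y}^\infty$ we obtain $Ax^{k_j}\to c-\C{B}\m{y}^\infty$, and since $A$ has full column rank (with closed range) this gives $x^{k_j}\to x^\infty$ for the unique $x^\infty$ with $Ax^\infty+\C{B}\m{y}^\infty-c=0$; combining with $\widetilde{x}^{k_j}=x^{k_j+1}$ and $\|Ax^{k_j+1}+\C{B}\m{y}^{k_j+1}-c\|\to0$ yields $\widetilde{x}^{k_j}\to x^\infty$, so $\widetilde{\m{w}}^{k_j}\to\m{w}^\infty:=(x^\infty;\m{y}^\infty;\lambda^\infty)$. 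Passing to the limit in (\ref{app-vi}) along $k_j$ (using $\widetilde{Q}(\widetilde{\m{v}}^{k_j}-\m{v}^{k_j})\to0$, continuity of $\C{J}$ and lower semicontinuity of $h$, as in the proof of Theorem~\ref{Conver-P}) produces the variational inequality (\ref{Final-AB}), so $\m{w}^\infty\in\C{M}^*$.

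Finally I would apply the quasi-Fej\'er estimate with $\m{v}^*=\m{v}^\infty$ — legitimate precisely because $\m{w}^\infty\in\C{M}^*$ and the estimate only sees $\m{v}^*$: then $\{\Phi_k\}$ built from $\m{v}^\infty$ is nonincreasing and bounded below, and it admits the subsequence $\Phi_{k_j}\to0$, so $\Phi_k\to0$; positive definiteness of $\widetilde{H}$ gives $\m{v}^k\to\m{v}^\infty$, and then $\|Ax^k+\C{B}\m{y}^k-c\|\to0$ with the full column rank of $A$ gives $x^k\to x^\infty$, so the whole sequence $\{\widetilde{\m{w}}^k\}$ converges to $\m{w}^\infty\in\C{M}^*$. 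Every step except the $x$-recovery is a verbatim copy of the corresponding step in Theorem~\ref{Conver-P}, so no new difficulty arises there.
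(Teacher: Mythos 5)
Your proposal is correct and follows exactly the route the paper intends: the paper only states that Theorem~\ref{Conver-P-b} follows ``by slight modifications of the proof of Theorem~\ref{Conver-P} and Theorem~\ref{Conver-P-a},'' and your reduction to the $(\m{y},\lambda)$ iterate $\m{v}^k$ with $\widetilde{H},\widetilde{G}$, the quasi-Fej\'er telescoping via Theorem~\ref{Thoe-38-b}, and the recovery of the $x$-block through feasibility plus the full column rank of $A$ (mirroring how $y$ is recovered in Theorem~\ref{Conver-P-a}) is precisely that modification, spelled out. You also correctly read the statement as referring to algorithm (\ref{GS-ADMM-b}) (the reference to (\ref{GS-ADMM-a}) in the theorem is a typo in the paper), so no gap remains.
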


By a similar proof to that of Theorem \ref{Conv-rate},
the algorithm (\ref{GS-ADMM-b}) also has the worst-case  $\C{O}(1/t)$ convergence rate.

\begin{remark}
Again, substituting the index $k+1$ by $k$ for the $x$ and $\lambda$ iterates, the algorithm (\ref{GS-ADMM-a}) can be also written as
\[
\  \left \{\begin{array}{lll}
y^{k+1}=\arg\min\limits_{y\in\mathcal{Y}} \mathcal{L}_\beta(\m{x}^k,y,\lambda^{k-\frac{1}{2}}), \\
\lambda^{k}=\lambda^{k-\frac{1}{2}}-s\beta(\mathcal{A}\m{x}^k+By^{k+1}-c) \\
 \textrm{For}\ i=1,2,\cdots,p,\\
 \quad x_{i}^{k+1}=\arg\min\limits_{x_{i}\in\mathcal{X}_{i}} \mathcal{L}_\beta (x_1^k,\cdots,, x_i,\cdots,x_p^k,y^{k+1},\lambda^k)
+P_i^k(x_i), \\
\quad \textrm{where } P_i^k(x_i) =
\frac{\sigma_1\beta}{2}\left\|A_{i}(x_{i}-x_{i}^k)\right\|^2,\\
 \lambda^{k+\frac{1}{2}}=\lambda^k-\tau\beta(\mathcal{A}\m{x}^{k+1}+B y^{k+1}-c).
\end{array}\right.
\]
So, by applying Theorem \ref{Conver-P-b} on the above algorithm, we know the algorithm (\ref{GS-ADMM-a})
also converges globally when
 $(\sigma_1, \tau, s)$ satisfy
\[
\sigma_1\in (p-1,+\infty), \quad \mbox{and} \quad (\tau,s) \in \overline{\C{K}},
 \]
where $\overline{\C{K}}$ is given in (\ref{barK}).
Hence, the convergence domain $\C{K}_1$ in Theorem \ref{Conver-P-a} can be enlarged to the symmetric domain
$\C{G} = \C{K}_1 \cup \overline{\C{K}}$ given in (\ref{setG}).
By a similar reason, the convergence domain $\C{K}$ in Theorem \ref{Conver-P-b} can be enlarged to
$\C{G}$ as well.
\end{remark}

%=============================================================
\section{Numerical experiments}
In this section, we investigate the performance of the proposed  GS-ADMM
for solving a class of sparse matrix minimization problems.
All the algorithms are coded and simulated in MATLAB 7.10(R2010a) on a PC with Intel Core i5 processor(3.3GHz) with 4 GB memory.
%=============================================================
\subsection{Test problem}
Consider the following Latent Variable Gaussian Graphical Model
Selection (LVGGMS) problem arising in the  statistical learning \cite{Chandrasekaran2012,Ma2017}:
\begin{equation}\label{Sec5-prob}
\begin{array}{lll}
\min\limits_{X,S,L\in\mathcal{R}^{n\times n}}  & F(X,S,L):=\langle X, C\rangle-\log\det(X)+ \nu\|S\|_1+\mu tr(L) \\
 \textrm{s.t. } &   X-S+L=\textbf{0},\ L\succeq\textbf{0},
\end{array}
\end{equation}
where $C\in\mathcal{R}^{n\times n}$ is the covariance matrix obtained from observation,
$\nu$ and $\mu$ are two given positive weight parameters,
 $tr(L)$ stands for the trace of the matrix $L$ and $\|S\|_1=\sum_{ij}|S_{ij}|$.
Clearly, by two different ways of partitioning the variables of (\ref{Sec5-prob}),
 the GS-ADMM (\ref{GS-ADMM}) can lead to the following two algorithms:
\begin{equation}\label{GS-ADMM-51}
 \left \{\begin{array}{lll}
X^{k+1}=\arg\min\limits_{X} \left\{\langle X,C\rangle-\log\det(X)+\frac{\beta}{2}\left\|X-S^k+L^k-\frac{\Lambda^{k}}{\beta}\right\|_F^2
+\frac{\sigma_1\beta}{2}\left\|X-X^k\right\|_F^2\right\}, \\
S^{k+1}=\arg\min\limits_{S} \left\{\nu\|S\|_1+\frac{\beta}{2}\left\|X^k-S+L^k-\frac{\Lambda^{k}}{\beta}\right\|_F^2
+\frac{\sigma_1\beta}{2}\left\|S-S^k\right\|_F^2\right\}, \\
\Lambda^{k+\frac{1}{2}}=\Lambda^{k}-\tau\beta(X^{k+1}-S^{k+1}+L^k), \\
L^{k+1}=\arg\min\limits_{L\succeq \textbf{0}} \left\{\mu tr(L)+\frac{\beta}{2}\left\|X^{k+1}-S^{k+1}+L-\frac{\Lambda^{k+\frac{1}{2}}}{\beta}\right\|_F^2
+\frac{\sigma_2\beta}{2}\left\|L-L^k\right\|_F^2\right\}, \\
 \Lambda^{k+1}=\Lambda^{k+\frac{1}{2}}-s\beta(X^{k+1}-S^{k+1}+L^{k+1});
\end{array}\right.
\end{equation}
\begin{equation}\label{GS-ADMM-522}
 \left \{\begin{array}{lll}
X^{k+1}=\arg\min\limits_{X} \left\{\langle X,C\rangle-\log\det(X)+\frac{\beta}{2}\left\|X-S^k+L^k-\frac{\Lambda^{k}}{\beta}\right\|_F^2
+\frac{\sigma_1\beta}{2}\left\|X-X^k\right\|_F^2\right\}, \\
\Lambda^{k+\frac{1}{2}}=\Lambda^{k}-\tau\beta(X^{k+1}-S^{k}+L^k), \\
S^{k+1}=\arg\min\limits_{S} \left\{\nu\|S\|_1+\frac{\beta}{2}\left\|X^{k+1}-S+L^k-\frac{\Lambda^{k+\frac{1}{2}}}{\beta}\right\|_F^2
+\frac{\sigma_2\beta}{2}\left\|S-S^k\right\|_F^2\right\}, \\
L^{k+1}=\arg\min\limits_{L\succeq \textbf{0}} \left\{\mu tr(L)+\frac{\beta}{2}\left\|X^{k+1}-S^{k}+L-\frac{\Lambda^{k+\frac{1}{2}}}{\beta}\right\|_F^2
+\frac{\sigma_2\beta}{2}\left\|L-L^k\right\|_F^2\right\}, \\
 \Lambda^{k+1}=\Lambda^{k+\frac{1}{2}}-s\beta(X^{k+1}-S^{k+1}+L^{k+1}).
\end{array}\right.
\end{equation}

Note that all the subproblems in (\ref{GS-ADMM-51}) and (\ref{GS-ADMM-522}) have
closed formula solutions. Next, we take the scheme (\ref{GS-ADMM-51}) for an example to show how to get the explicit solutions of the subproblem.
By the first-order optimality condition of the
$X$-subproblem in (\ref{GS-ADMM-51}), we derive
\[
\textbf{0}= C-X^{-1}+\beta\left(X-S^k+L^k-{\Lambda^{k}}/{\beta}\right)
+\sigma_1\beta(X-X^k)
\]
which is equivalent to
\[
 (\sigma_1+1)\beta X^2 +\left[C+\beta(L^k-S^k)-\Lambda^k-\sigma_1\beta X^k\right]X-\textbf{I}=\textbf{0}.
\]
Then, from the eigenvalue decomposition
\[
U\textrm{Diag}(\rho)U\tr = C+\beta(L^k-S^k)-\Lambda^k-\sigma_1\beta X^k,
\]
where $\textrm{Diag}(\rho)$ is a diagonal matrix with
$\rho_i, i=1, \cdots, n$, on the diagonal, we obtain that the solution of the $X$-subproblem in (\ref{GS-ADMM-51})
is
\[
X^{k+1}=U\textrm{Dia}g(\gamma)U \tr,
\]
where $\textrm{Diag}(\gamma)$ is the diagonal matrix with diagonal elements
\[
\gamma_i=\frac{-\rho_i+\sqrt{\rho_i^2+4(\sigma_1+1)\beta}}{2(\sigma_1+1)\beta}, \quad i=1,2,\cdots,n.
\]
On the other hand, the $S$-subproblem in (\ref{GS-ADMM-51}) is equivalent to
\[
\begin{array}{lll}
S^{k+1}&=&\arg\min\limits_{S} \left\{\nu\|S\|_1+
\frac{(\sigma_1+1)\beta}{2}\left\|S-\frac{X^k+L^k+\sigma_1S^k-\Lambda^k/\beta}{(\sigma_1+1)
}\right\|_F^2\right\}\\
&=& \textrm{Shrink}\left(\frac{X^k+L^k+\sigma_1S^k-\Lambda^k/\beta}{(\sigma_1+1)
}, \frac{\nu}{(\sigma_1+1)\beta}\right),
\end{array}
\]
where $\textrm{Shrink}(\cdot,\cdot)$ is the soft shrinkage operator (see e.g.\cite{TaoYuan2014}).
%defined as
%\[
%[\textrm{Shrink}(S,\nu)]_{ij} =\left \{\begin{array}{lll}
%S_{ij}-\nu, & \textrm{if} & S_{ij}>\nu,\\
%-S_{ij}+\nu, & \textrm{if}& S_{ij}<-\nu,\\
%0, & \textrm{if} & -\nu\leq S_{ij}\leq\nu.
%\end{array}\right.
%\]
Meanwhile,  it is easy to verify that the  $L$-subproblem is equivalent to
\[
\begin{array}{lll}
L^{k+1}&=&\arg\min\limits_{L\succeq \textbf{0}}
\frac{(\sigma_2+1)\beta}{2}\left\|L-\widetilde{L}\right\|_F^2\\
&=& V\textrm{Diag}(\max\{\rho,\m{0}\})V\tr,
\end{array}
\]
where $\max\{\rho, \m{0}\}$ is taken component-wise and
$V\textrm{Diag}(\rho)V\tr$ is the eigenvalue decomposition of the matrix
\[
\widetilde{L}=\frac{\sigma_2 L^k+S^{k+1}+\Lambda^{k+\frac{1}{2}}/\beta-X^{k+1}-\mu \textbf{I}/\beta}{(\sigma_2+1)}.
\]

\subsection{Numerical results}
In the following, we investigate the performance of several algorithms for solving the LVGGMS problem, where all the corresponding subproblems can be solved
in a similar way as shown in the above analysis. For all algorithms, the maximal number of
iterations is set as $1000$, the starting iterative values are set as $(X^0,S^0, L^0,\Lambda^0)=(\textbf{I},\textbf{2I},\textbf{I},\textbf{0})$,
and motivated by Remark \ref{rremar}, the following stopping conditions are used
\begin{eqnarray*}
\textrm{IER(k)} &:= & \max\left\{\left\|X^k-X^{k-1}\right\|_\infty,\left\|S^k-S^{k-1}\right\|_\infty,  \left\|L^k-L^{k-1}\right\|_\infty\right\}\leq \textrm{TOL}, \\
\textrm{OER(k)} &:= & \frac{|F(X^{k},S^{k},L^{k})-F^*|}{|F^*|}\leq \textrm{Tol},
\end{eqnarray*}
together with $\textrm{CER(k)}:=\left\|X^{k}-S^{k}+L^{k}\right\|_F\leq10^{-4}$,
where $F^*$ is the approximate optimal objective function value obtained by running GS-ADMM (\ref{GS-ADMM-51}) after 1000 iterations.
In (\ref{Sec5-prob}), we set $(\nu,\mu)=(0.005,0.05)$ and
the given data $C$ is randomly generated by the following MATLAB code with $m=100$, which are
downloaded from S. Boyd's homepage\footnote{http://web.stanford.edu/$\sim$boyd/papers/admm/covsel/covsel\_example.html}:
{
\[ \begin{array}{l}
\verb"randn('seed',0); rand('seed',0); n=m; N=10*n;" \\
\verb"Sinv=diag(abs(ones(n,1))); idx=randsample(n^2,0.001*n^2);"\\
\verb"Sinv(idx)=ones(numel(idx),1); Sinv=Sinv+Sinv';"\\
\verb"if min(eig(Sinv))<0" \\
\quad\verb" Sinv=Sinv+1.1*abs(min(eig(Sinv)))*eye(n);"\\
\verb"end"\\
\verb"S=inv(Sinv);"\\
\verb"D=mvnrnd(zeros(1,n),S,N); C=cov(D);"\\
\end{array}
\]
}

\subsubsection{Performance of different versions of GS-ADMM}
In the following, we denote
 \[ \begin{array}{l}
\textrm{GS-ADMM}\ (\ref{GS-ADMM-51})\ \textrm{by}\ \textrm{``\textbf{GS-ADMM-I}''};\\
\textrm{GS-ADMM}\ (\ref{GS-ADMM-522})\ \textrm{by}\ \textrm{``\textbf{GS-ADMM-II}''};\\
\textrm{GS-ADMM}\ (\ref{GS-ADMM-51})\ \textrm{with}\ \sigma_2=0\ \textrm{by}\ \textrm{``\textbf{GS-ADMM-III}''};\\
\textrm{GS-ADMM}\ (\ref{GS-ADMM-522})\ \textrm{with}\ \sigma_1=0\ \textrm{by}\ \textrm{``\textbf{GS-ADMM-IV}''}.\\
\end{array}
\]

\vskip1cm
{\small
\begin{center}
\begin{tabular}{|c|ccccc|}
\hline
\textbf{GS-ADMM-I}/$\beta$&  Iter(k)  & CPU(s) & CER  & IER & OER\\
\hline
0.5  &  1000    & 15.29     & 7.2116e-8      &   \emph{5.0083e-6}    & 3.2384e-10 \\
0.2  &493    &8.58      &  1.4886e-8    & 9.8980e-8   &  5.7847e-11 \\
0.1  & 254   & 4.24      &  1.6105e-8    &  9.7867e-8  &  5.6284e-11\\
0.08 &202   & 3.27      &  1.7112e-8    & 9.8657e-8  &   5.6063e-11\\
0.07 & 175     & 3.03     &   1.7548e-8 &    9.7091e-8 &  5.4426e-11\\
0.06 & 146     & 2.42    &  1.9200e-8 &   9.9841e-8 & 5.4499e-11\\
0.05 &   115  &  \textbf{1.84}  & 1.9174e-8 &  8.8302e-8&4.4919e-11\\
0.03 & \textbf{112} &  2.21      &  1.7788e-7   & 9.9591e-8  &  2.2472e-11\\
0.01 & 270   & 4.50       & 6.4349e-7   &9.9990e-8   &2.5969e-10 \\
0.006& 424  &  7.57   &1.0801e-6   &9.8883e-8    &  5.0542e-10\\
0.004& 604 &  10.74  & 1.6490e-6   &  9.9185e-8    &  8.7172e-10\\
\hline
\textbf{GS-ADMM-II}/$\beta$&  Iter(k)  & CPU(s) &  CER  &IER  &OER\\
\hline
0.5  & 1000    &  15.80    & 8.8857e-8      &   \emph{3.2511e-6}    & 4.0156e-10 \\
0.2  &603    &11.35      &  3.7706e-9    & 9.9070e-8   &  1.2204e-12 \\
0.1  & 312   &  4.93     &  6.0798e-9    &  9.9239e-8  &  2.3994e-12\\
0.08 &250   &4.40       &  7.1384e-9    & 9.6234e-8  &    2.8127e-12\\
0.07 & 217     &  3.42    &   8.2861e-9 &    9.8471e-8 &  3.1878e-12\\
0.06 &  183    &  3.09   &  9.7087e-8 &   9.8298e-8& 3.4898e-12\\
0.05 &  147  &  2.85      &1.1335e-8    & 9.1450e-8  & 3.3405e-12 \\
0.03 & \textbf{114} & \textbf{1.85}  & 1.5606e-7   & 9.1283e-8  &  1.9479e-11 \\
0.01 & 271 & 4.70      & 6.2003e-7   &9.6960e-8  &2.4594e-10 \\
0.006& 424  &7.38   &1.0774e-6   &9.8852e-8  &   5.0224e-10\\
0.004& 604 & 10.01    &  1.6461e-6   &  9.9114e-8  & 8.6812 e-10\\
\hline
\textbf{GS-ADMM-III}/$\beta$&  Iter(k)  & CPU(s) & CER  & IER & OER\\
\hline
0.5  & 579   & 9.36     & 1.2740e-8      &  9.9818e-8   & 5.2821e-11 \\
0.2  & 247   & 5.52     &  1.2043e-8    & 9.6354e-8   &  4.5217e-11 \\
0.1  & 125   & 2.14      &  1.1737e-8    &  9.5170e-8  &  3.6207e-11\\
0.08 & 97  & 1.55      &  1.2078e-8    & 9.7603e-8  &   2.8773e-11\\
0.07 & 82     & 1.36     &   1.1854e-8 &    9.5322e-8 &  1.6215e-11\\
0.06 & \textbf{69}     & \textbf{1.27}    &  1.2680e-8 &   8.2352e-8& 1.5087e-11\\
0.05 &  71   &  1.40     & 9.1560e-8    & 9.8745e-8  & 8.1869e-12 \\
0.03 & 110   &1.71     & 1.8118e-7    &9.4257e-8   & 2.7549e-11 \\
0.01 & 271   & 4.46    &6.3390e-7     &9.7803e-8   & 2.5210e-10 \\
0.006& 424   & 6.92    & 1.0856e-6    &  9.9123e-8 & 5.0717e-10 \\
0.004&  604  &10.11     &1.6527e-6     &9.9275e-8   & 8.7303e-10 \\
\hline
\textbf{GS-ADMM-IV}/$\beta$&  Iter(k)  & CPU(s) &  CER  &IER  &OER\\
\hline
0.5  &  1000    & 15.76     & 7.1259e-8      &  \emph{2.6323e-6}    & 6.9956e-12 \\
0.2  & 587   &  9.08    &  3.8200e-9    & 9.9214e-8   &  1.3291e-12 \\
0.1  & 304   & 4.80      &  6.0296e-9    &  9.6197e-8  &  2.4309e-12\\
0.08 & 243  & 4.91      &  7.2062e-9    & 9.4484e-8  &   2.8670e-12\\
0.07 & 211     & 3.25     &   8.1772e-9 &    9.4133e-8 &  3.1477e-12\\
0.06 &  177    & 2.81    &  9.9510e-9 &   9.6911e-8& 3.5342e-12\\
0.05 & 140   &3.07     & 1.3067e-8    & 9.9446e-8  & 3.6691e-12 \\
0.03 & \textbf{115}   & \textbf{1.80}    &  1.6886e-7   &9.5844e-8   &  2.1829e-11\\
0.01 & 271   & 4.67    & 6.2006e-7    &9.7151e-8   & 2.4927e-10 \\
0.006& 424   & 6.94    & 1.0758e-6    &9.8755e-8   & 5.0454e-10 \\
0.004& 604   & 10.21    & 1.6454e-6    & 9.9088e-8  & 8.6995e-10 \\
\hline
\end{tabular}
\end{center}
}
\begin{center}
Table 1:\ Numerical results of GS-ADMM-I, II, III and IV  with different $\beta$.
\end{center}
\vskip4mm

First, we would like to investigate the performance of the above different versions of GS-ADMM
for solving the LVGGMS problem  with variance of the penalty parameter $\beta$.
The results are reported in Table 1 with $\textrm{TOL}=\textrm{Tol}=1.0\times 10^{-7}$,
and $(\tau,s)=(0.8,1.17)$. For GS-ADMM-I and GS-ADMM-II, $(\sigma_1,\sigma_2)=(2,3)$.
Here,  ``Iter" and ``CPU" denote the iteration number and the CPU time in seconds, and
the bold letter indicates the best result of each algorithm. From Table 1, we can observe that:
\begin{itemize}
\item Both the iteration number and the CPU time of all the algorithms have a similar
changing pattern, which decreases originally  and then increases along with the decrease of the  value of  $\beta$.
\item  For the same value of $\beta$, the results of GS-ADMM-III are  slightly better than other algorithms
in terms of the iteration number, CPU time,  and the feasibility errors CER, IER and OER.
\item GS-ADMM-III with $\beta=0.5$ can terminate after 579 iterations to achieve the tolerance $10^{-7}$,
while the other algorithms with $\beta=0.5$ fail to achieve this tolerance within given number of iterations.
\end{itemize}
In general, the algorithm  (\ref{GS-ADMM-51}) with $\beta = 0.06$ performs better than other cases.
Hence, in the following experiments for GS-ADMM, we adapt GS-ADMM-III with default $\beta = 0.06$.
 Also note that $\sigma_2 =0$, which is not allowed by the algorithms discussed in \cite{Heyuan2015}.
\vskip4mm
{\small
\begin{center}
\begin{tabular}{|c|ccccc|}
\hline
$(\tau,s)$&  Iter(k)  & CPU(s) & CER  & IER & OER\\
\hline
(1, -0.8)     & 256   & 4.20    & 9.8084e-5   &  7.8786e-6  & 1.1298e-7  \\
(1, -0.6)     & 144   & 2.39    & 5.7216e-5   &  9.9974e-6  & 3.8444e-8  \\\
(1, -0.4)     & 105   & 1.80    & 3.5144e-5   &  9.7960e-6  & 1.3946e-8  \\
(1, -0.2)     & 84   & 1.45    & 2.3513e-5   &  9.3160e-6  & 6.4220e-9  \\
(1,\ \ \ 0)    & 70   & 1.14    & 1.7899e-5   &  9.4261e-6  & 3.9922e-9  \\
(1, 0.2)      & 61   & 0.98    & 1.3141e-5   &  8.9191e-6  & 1.7780e-9  \\
(1, 0.4)      & 54   &  0.88   & 1.0549e-5   &  9.1564e-6  & 4.6063e-10  \\
(1, 0.6)      & 49   & 0.82    & 9.0317e-5   &  9.4051e-6  & 2.7938e-9  \\
(1, 0.8)      &\textbf{49}    &  \textbf{0.80}   & 3.5351e-5   &  8.0885e-6  & 1.4738e-9  \\
\hline
(-0.8, 1)     &229    & 3.91    & 9.9324e-5   &  8.4462e-6  & 1.9906e-7  \\
(-0.6, 1)     & 127   &2.06     & 6.1118e-5   &  9.6995e-6  & 7.8849e-8  \\
(-0.4, 1)     &  96  & 1.61    & 3.4111e-5   &  9.6829e-6  & 2.7549e-8  \\
(-0.2, 1)     & 79   & 1.30    & 2.2004e-5   &  9.6567e-6  & 1.2015e-8  \\
(0,\ \ \ 1)    & 67   & 1.16    & 1.6747e-5   &  9.9244e-6  & 6.2228e-9  \\
(0.2, 1)      & 59   & 0.93    & 1.2719e-5   &  9.4862e-6  & 2.9997e-9  \\
(0.4, 1)      &  53  &0.88     & 1.0253e-5   &  9.3461e-6  & 3.4811e-10  \\
(0.6, 1)      &  49 &  0.85   & 8.0343e-6   &  8.8412e-6  & 2.9837e-9  \\
(0.8, 1)      &  \textbf{49}   & \textbf{0.81}    & 3.3831e-6   &  8.1998e-6  & 2.1457e-9  \\
\hline
(1.6, -0.3)   & 60   &0.99     & 1.2111e-5   &  9.4583e-6  & 1.1705e-9  \\
(1.6, -0.6)   & 74   &1.22     & 1.8012e-5   &  9.6814e-6  & 2.7562e-9  \\
(1.5, -0.8)   & 97   & 1.68    & 3.1310e-5   &  9.8972e-6  & 1.4911e-8  \\
(1.3, 0.3)    & 50   &0.83     & 8.5476e-6   &  8.9655e-6  & 3.4389e-10  \\
(0.2, 0.5)    & 87   & 1.44    & 2.7160e-5   &  9.4503e-6  & 1.7906e-8  \\
(0.4, 0.9)    &  56  & 0.98    & 1.1060e-5   &  9.1081e-6  & 1.7179e-9  \\
(0.8, 1.17)   & 49   & 0.86    & 1.5419e-6   &  8.5023e-6  & 2.5529e-9  \\
(0, 1.618)   & 50   & 0.90    & 5.5019e-6   &  8.6980e-6  & 1.4722e-9  \\
(0.9, 1.09)   & \textbf{49}   & \textbf{0.78}    & 1.4874e-6   &  8.4766e-6  & 2.2194e-9  \\
\hline
(0.1, 0.1)   & 229    & 4.42    &  9.8698e-5   &   8.3622e-6  &  2.3575e-7  \\
(0.2, 0.2)   & 130    & 2.32    &  5.5559e-5   &   9.9888e-6  &  7.5859e-8  \\
(0.3, 0.3)   & 97    & 1.75    &  3.4344e-5   &   9.9362e-6  &  2.8190e-8  \\
(0.4, 0.4)   & 79    & 1.43    &  2.4256e-5   &   9.8539e-6  &  1.2790e-8  \\
(0.5, 0.5)   & 68    &1.15     &  1.6805e-5   &   9.2144e-6  &  5.5121e-9  \\
(0.6, 0.6)   & 59    & 0.98    &  1.3862e-5   &   9.7793e-6  &  2.8580e-9  \\
(0.7, 0.7)   & 53    & 0.91    &  1.1091e-5   &   9.6433e-6  &  3.9013e-12  \\
(0.8, 0.8)   &49     & 0.84    &  8.4235e-6   &   8.9432e-6  &  3.0519e-9  \\
(0.9, 0.9)   & \textbf{49}    &  \textbf{0.83}   &  3.4493e-6   &   8.1314e-6  &  1.8888e-9  \\
\hline
\end{tabular}
\end{center}
}
\begin{center}
Table 2:\ Numerical results of GS-ADMM-III  with different stepsizes $(\tau, s)$.
\end{center}

Second, we investigate how the stepsizes
$(\tau,s)\in \C{G}$ with different values would affect the performance of GS-ADMM-III.
 Table 2 reports the comparison results with variance of $(\tau,s)$ for
 $\textrm{TOL}=\textrm{Tol}=1.0\times 10^{-5}$.
 One obvious observation from Table 2 is that both the iteration number and the CPU time decrease along with the increase of s (or  $\tau$)
 for fixed value of $\tau$ (or s), which indicates that the stepsizes of $(\tau,s)\in \C{G}$
could influence the performance of  GS-ADMM significantly.
 In addition, the results in Table 2 also indicate that using more flexible but with both relatively
larger stepsizes $\tau$ and $s$ of the dual variables often gives the best convergence speed.
 Comparing all the reported results in Table 2,  by setting
$(\tau,s)=(0.9,1.09)$, GS-ADMM-III gives the relative best performance for solving
the problem (\ref{Sec5-prob}).

\subsubsection{Comparison of GS-ADMM with other state-of-the-art algorithms}
In this subsection, we would like to carry out some numerical comparison of solving the problem (\ref{Sec5-prob})
by using GS-ADMM-III and the other four methods:
\vskip4mm
{\begin{center}
{\small\begin{tabular}{|c|c|c|ccccc|}
\hline
 &TOL& Tol& Iter(k)  & CPU(s) & CER  & IER & OER\\
\hline
  &1e-3      &1e-7 &  \textbf{33}  &  \textbf{0.46} & 8.3280e-5   & 2.5770e-4 &  4.0973e-9\\
         &  &1e-12& \textbf{83}   & \textbf{1.16}  & 9.5004e-9  & 1.0413e-8 &  8.3240e-13\\
         &  & &     &    &   &   &   \\
\textbf{GS-ADMM-III}  &1e-6      &1e-8 & \textbf{58}   &  \textbf{0.84} & 8.3812e-7   & 9.0995e-7 &  7.6372e-11\\
    &  &1e-14& \textbf{108}   & \textbf{1.55}  &  1.0936e-10  & 1.2072e-10 &  9.5398e-15\\
     &  & &     &    &   &   &   \\
 &1e-9      &1e-7 & \textbf{97}   &\textbf{1.39}   & 7.7916e-10   & 8.5759e-10 &  6.8775e-14\\
  &  &1e-15&\textbf{118}   & \textbf{1.72}  &  1.8412e-11  & 2.0361e-11 &  6.6557e-16\\
   &  & &     &    &   &   &   \\
\hline
  &1e-3      &1e-7 & 62   & 0.88  & 9.6422e-5   &3.6934e-5 & 5.8126e-8\\
       &  &1e-12& 187   &2.74   &  9.4636e-9  & 3.4868e-9 &  9.4447e-13\\
          &  & &     &    &   &   &   \\
 \textbf{PJALM}   &1e-6      &1e-8 & 111   &1.67   & 2.4977e-6   &9.4450e-7 &  4.1335e-10\\
    &  &1e-14&   249 &  3.63 &  1.0173e-10  & 3.7225e-11 &  8.6506e-15\\
     &  & &     &    &   &   &   \\
  &1e-9      &1e-7 & 205   & 3.06   & 2.5369e-9   & 9.3210e-10 & 2.4510e-13\\
  &  &1e-15&  276  & 4.08  &  1.4143e-11  & 5.2002e-11 &  6.6543e-16\\
   &  & &     &    &   &   &   \\
\hline
 &1e-3      &1e-7 & 62   &0.85   & 4.8548e-5   & 1.7123e-5 & 9.3737e-8\\
          &  &1e-12& 176   & 2.64  &  2.7059e-9  & 9.7709e-10 &  9.1783e-13\\
           &  & &     &    &   &   &   \\
\textbf{HTY}  &1e-6      &1e-8 & 92   &  1.35 & 2.7184e-6   & 9.6661e-7 & 4.0385e-9\\
  &  &1e-14& 223   &3.15   & 1.1042e-10  & 4.0226e-11 &  9.3091e-15\\
   &  & &     &    &   &   &   \\
  &1e-9      &1e-7 & 176   & 2.78 & 2.7059e-9   & 9.7709e-10 & 9.1783e-13\\
  &  &1e-15&   243 &  3.70 & 2.8377e-11  & 1.0533e-11 &  4.4329e-16\\
   &  & &     &    &   &   &   \\
\hline
 &1e-3      &1e-7 & \emph{\textbf{61}}   &\emph{\textbf{0.82}}   & 7.4082e-5   & 3.3954e-5 & 1.5195e-9\\
          &  &1e-12& \emph{\textbf{127}}   & \emph{\textbf{1.84}}   & 5.8944e-8    &  1.6729e-7 & 1.3001e-13  \\
           &  & &     &    &   &   &   \\
\textbf{GR-PPA}  &1e-6      &1e-8 & 108   & 1.52   & 5.5130e-7    & 6.2676e-7  & 3.4315e-11  \\
  &     &1e-14     &  \emph{\textbf{172}}  & \emph{\textbf{2.56}}   & 2.9521e-10    &8.3742e-10   & 8.8742e-16  \\
   &  & &     &    &   &   &   \\
  &1e-9      &1e-7 & \emph{\textbf{167}}   & \emph{\textbf{2.42}}   &  5.3963e-10   &  7.3383e-10 &  3.7050e-14 \\
  &  &1e-15        & \emph{\textbf{172}}   & \emph{\textbf{2.41}}   & 2.9521e-10     & 8.3742e-10  & 8.8742e-16  \\
   &  & &     &    &   &   &   \\
\hline
  &1e-3      &1e-7 &  \emph{40}   &\emph{0.55}  &  9.8495e-5 & 1.2096e-4  &2.7440e-8  \\
        &  &1e-12& \emph{112} & \emph{1.53}  & 7.1036e-9  & 4.8224e-9 &  8.9763e-13   \\
           &  & &     &    &   &   &   \\
 \textbf{T-ADMM}   &1e-6      &1e-8 & \emph{72}    &\emph{1.02}  & 1.3128e-6  & 8.9570e-7  & 2.3510e-10 \\
  &     &1e-14     &\emph{147}     & \emph{2.12} &  7.4334e-11 &  5.0156e-11 & 9.7617e-15 \\
   &  & &     &    &   &   &   \\
  &1e-9      &1e-7 & \emph{125}    & \emph{1.70} & 1.3053e-9  &8.8746e-10   & 1.5974e-13 \\
  &  &1e-15        & \emph{160}    & \emph{2.01} & 1.3669e-11  &9.4374e-12   &6.6557e-16  \\
\hline
\end{tabular}}
\end{center}
\begin{center}
Table 3: Comparative results of different algorithms under different   tolerances.
\end{center}
}
\vskip4mm
\begin{itemize}
\item  The Proximal Jacobian Decomposition of ALM \cite{HeXuYuan2016} (denoted by ``PJALM'');
\item The splitting method in \cite{HeTaoYuan2015} (denoted by ``HTY'');
\item The generalized parametrized proximal point algorithm \cite{BaiLiLi2017} (denoted by ``GR-PPA'').
\item  The twisted version of the proximal ADMM \cite{WangSong2017} (denoted by ``T-ADMM'').
\end{itemize}

We set  $(\tau,s)=(0.9,1.09)$ for GS-ADMM-III  and the parameter $\beta = 0.05$
for all the comparison algorithms. The default parameter $\mu=2.01$ and $H = \beta \m{I}$ are used for
 HTY \cite{HeTaoYuan2015}. As suggested by the theory and numerical
experiments in \cite{HeXuYuan2016}, the proximal parameter is set as 2 for PJALM. As shown in  \cite{BaiLiLi2017}, the relaxation factor of GR-PPA is set as 1.8 and other default parameters are chosen as \[(\sigma_1,\sigma_2,\sigma_3,s,\tau,\varepsilon)=\left(0.178,0.178,0.178, 10,\frac{\sqrt{5}-1}{2},\frac{\sqrt{5}-1}{2}\right).\]
For T-ADMM, the symmetric matrices therein are chosen as $M_2=M_2=v\textbf{I}$ with $v=\beta$ and the correction factor is set as $a=1.6$ \cite{WangSong2017}.
The results obtained by the above algorithms under different tolerances are reported in Table 3.
With fixed tolerance $\textrm{TOL}=10^{-9}$ and $\textrm{Tol}=10^{-15}$,
the convergence behavior of the error measurements IER(k) and OER(k)
 by the five algorithms using different starting points are shown in Figs. 3-5.
From Table 3 and Figures 3-5, we may have the following observation:
\begin{itemize}
\item Under all different tolerances, GS-ADMM-III performs significantly
better than other four algorithms in both the number of iterations and CPU time.
\item GR-PPA is slightly better than PJALM and HTY, and T-ADMM outperforms  PJALM, HTY and GR-PPA.
\item the convergence curves  in Figs. 3-5  illustrate that using different starting points, GS-ADMM-III also converges
fastest among  the  comparing methods.
\end{itemize}
All these numerical results demonstrate the effectiveness and robustness of  GS-ADMM-III, which
is perhaps due to the symmetric updating of the Lagrange multipliers and the proper choice of the stepsizes.

%=============================================================
\section{Conclusion}
%=============================================================
Since the direct extension of ADMM in a Gauss-Seidel fashion
for solving the three-block separable convex optimization problem is not necessarily
convergent analyzed by Chen et al. \cite{Chen2016}, there has been a constantly increasing interest in developing and
improving the theory of the ADMM for solving the multi-block separable convex optimization.
In this paper, we propose an algorithm, called GS-ADMM, which could solve the
general model (\ref{Prob}) by taking advantages of the multi-block structure.
In our GS-ADMM, the Gauss-Seidel fashion is taken for updating
the two grouped variables, while the block variables within each group are updated
in a Jacobi scheme, which would make the algorithm more be attractive and effective for solving big
size problems. We provide a new convergence domain for the stepsizes of the dual
variables, which is significantly larger than the convergence domains given in the literature.
Global convergence as well as the $\C{O}(1/t)$ ergodic convergence
rate of the GS-ADMM is established.  In addition, two special cases of GS-ADMM, which allows
one of the proximal parameters to be zero, are also discussed.

This paper simplifies the analysis in \cite{HeMaYuan2016}
and provides an easy way to analyze the convergence of the symmetric ADMM.
Our preliminary numerical experiments show that with proper choice of parameters,
the performance of the GS-ADMM could be very promising.
Besides, from the presented convergence analysis, we can see that  the theories in the paper
can be naturally extended to use more general proximal terms, such as letting
$P_i^k(x_k) := \frac{\beta}{2}\|x_i - x_i^k\|_{\m{P}_i}$ and
$Q_j^k(y_j) := \frac{ \beta}{2}\|y_j - y_j^k\|_{\m{Q}_j}$
in (\ref{GS-ADMM}),  where $\m{P}_i$ and $\m{Q}_i$ are matrices such that
$\m{P}_i \succ (p-1) A_i \tr A_i$ and $\m{Q}_j \succ (q-1) B_j \tr B_j$
for all $i=1, \cdots, p$ and $j=1, \cdots, q$. Finally, the different ways of partitioning
the variables of the problem also gives the flexibility of GS-ADMM. \\

\begin{figure}[htbp]
 \begin{minipage}{1\textwidth}
 \def\figurename{\footnotesize Fig.}
 \centering
\resizebox{8cm}{8cm}{\includegraphics{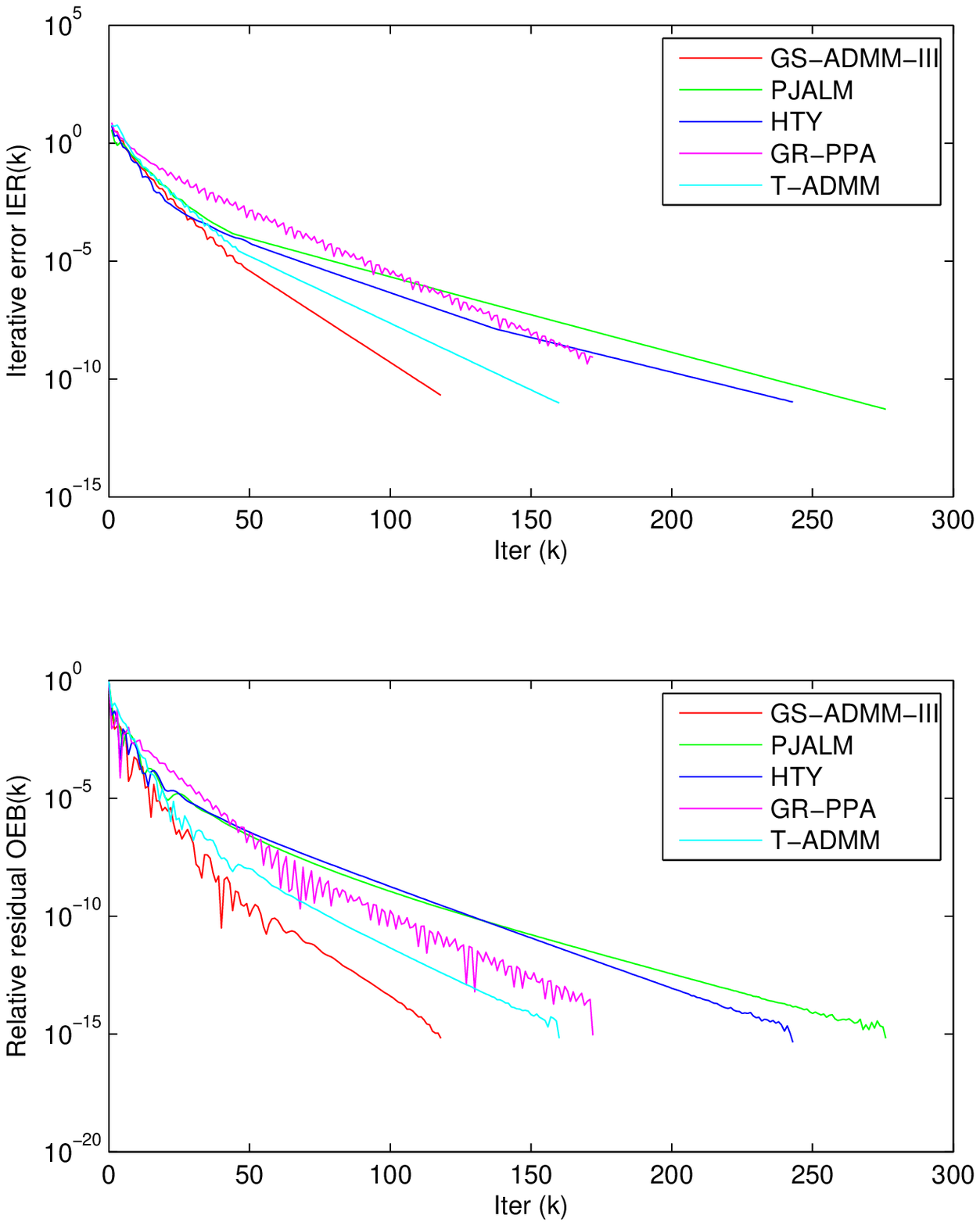}}
\caption{\footnotesize Convergence curves of IER and OER with initial values $(X^0,S^0, L^0,\Lambda^0)=(\textbf{I},2\textbf{I},\textbf{I},\textbf{0})$.}
\resizebox{8cm}{8cm}{\includegraphics{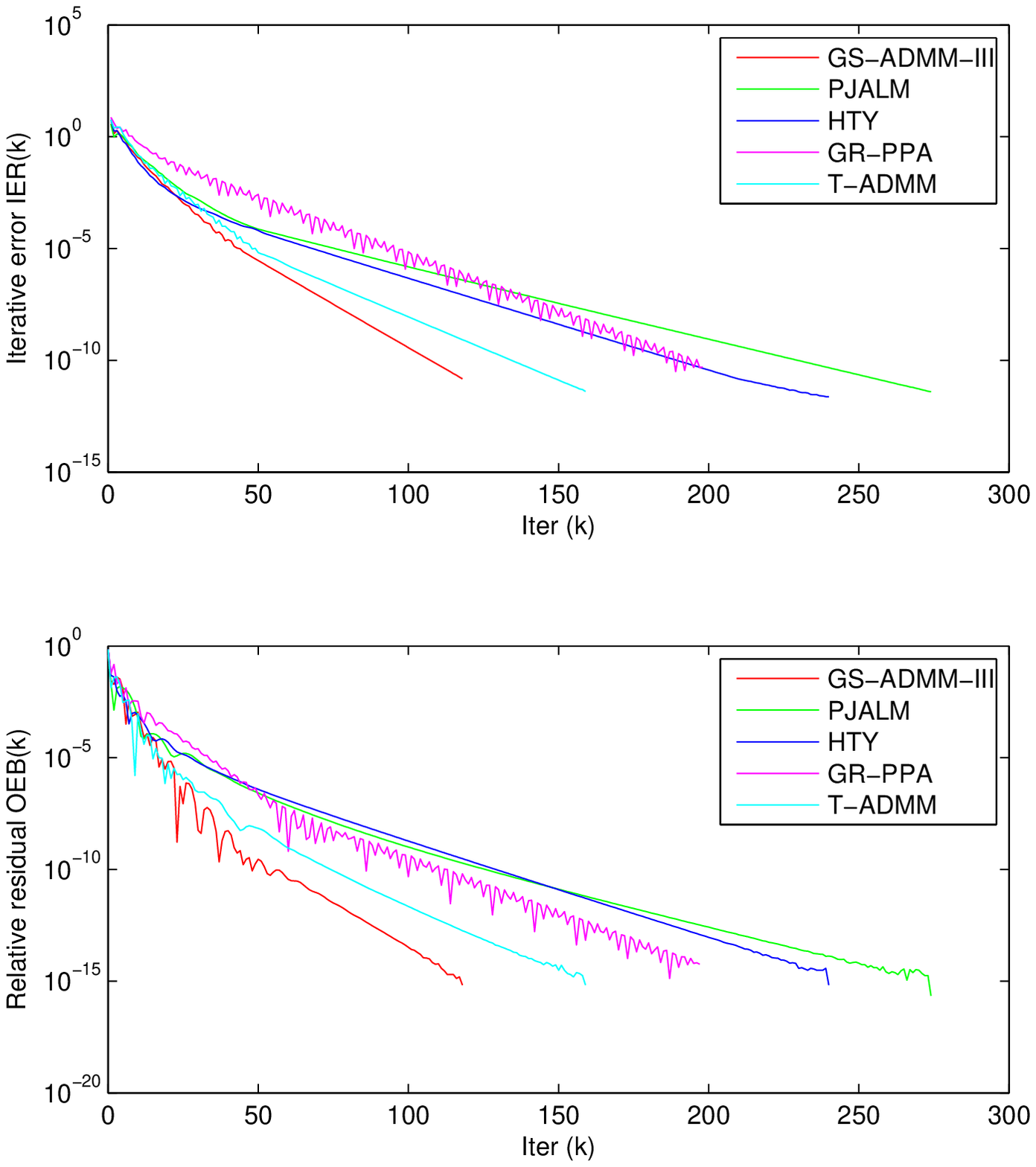}}
\caption{\footnotesize Convergence curves of IER and OER with initial values $(X^0,S^0, L^0,\Lambda^0)=(\textbf{I},\textbf{I},\textbf{0},\textbf{0})$.}
\resizebox{8cm}{8cm}{\includegraphics{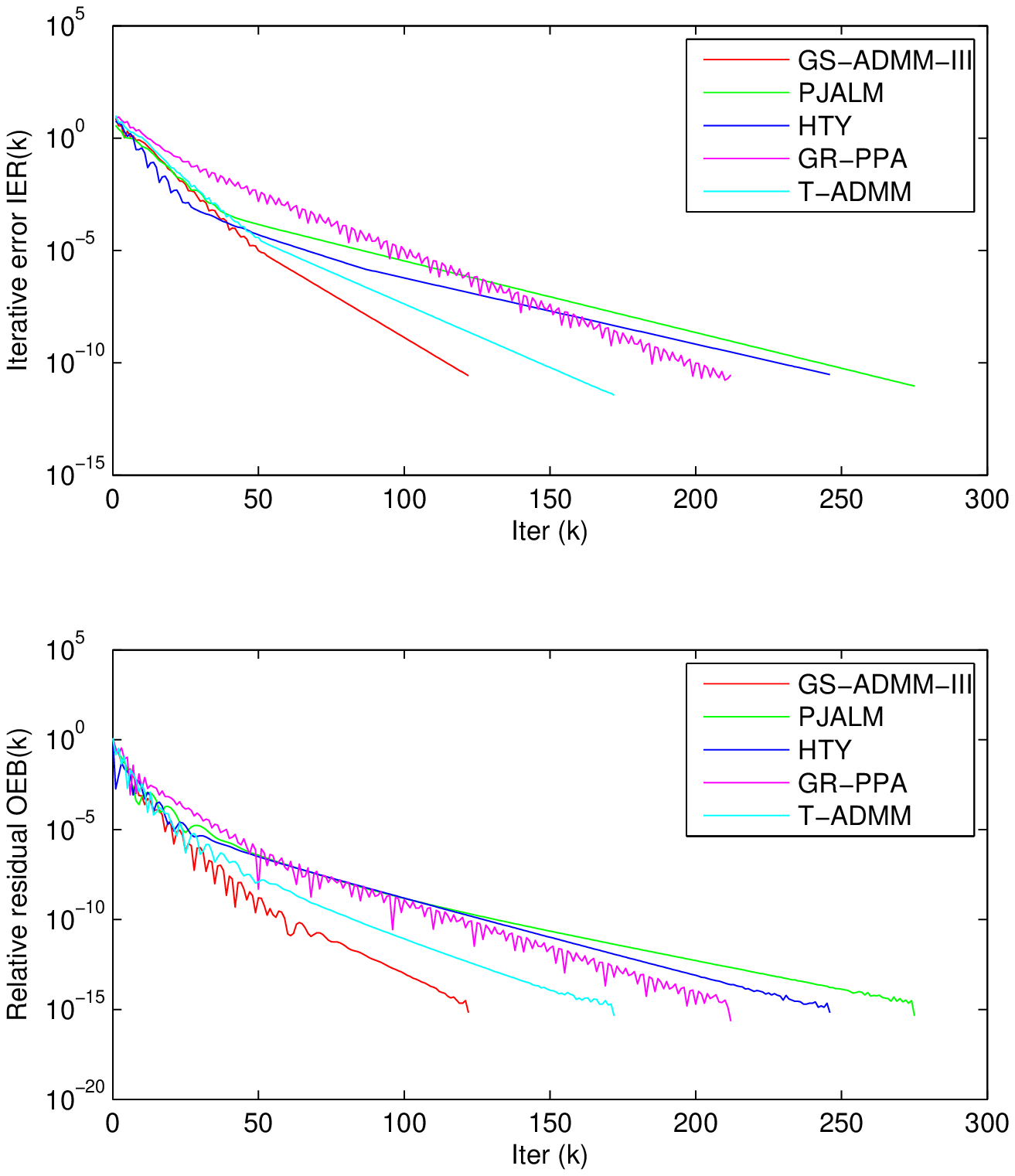}}
\caption{\footnotesize Convergence curves of IER and OER with initial values $(X^0,S^0, L^0,\Lambda^0)=(\textbf{I},4\textbf{I},3\textbf{I},\textbf{0})$.}
   \end{minipage}
\end{figure}

%=============================================================
{\textbf{Acknowledgements.}}
%=============================================================
The authors  would like to thank the anonymous referees for providing very constructive comments.
The first author also wish to thank Prof. Defeng Sun at National University of Singapore for his valuable discussions on ADMM
and Prof.  Pingfan Dai at Xi'an Jiaotong University for discussion on an early version of the paper.

\end{document}